\titleformat{\section}{\large\bfseries\center\raggedright}{\thesection}{0.5em}{{#1}}[]
\titleformat{\subsection}{\bfseries\center\raggedright}{\thesubsection}{0.5em}{{#1}}[]
\titleformat{\subsubsection}[runin]{\bfseries}{\thesubsubsection}{0.5em}{{#1}}[.]
\titlespacing*{\section}{0pt}{0.8\baselineskip}{0.6\baselineskip}
\titlespacing*{\subsection}{0pt}{0.6\baselineskip}{0.4\baselineskip}
\titlespacing*{\subsubsection}{0pt}{0.4\baselineskip}{0.4\baselineskip}
\newcommand\namefont{\normalfont\bfseries}
\newcommand\numberfont{\normalfont\bfseries}
\newcommand\notefont{\normalfont\bfseries}
\newtheoremstyle{mystyle} 
	{0.3em} 
	{0.3em} 
	{\itshape} 
	{} 
	{\normalfont} 
	{.} 
	{.5em} 
	{{\namefont\thmname{#1}}~{\numberfont\thmnumber{#2}}{\notefont\thmnote{ (#3)}}} 
\theoremstyle{plain}
\newtheorem{thm}{Theorem}[section]
\newtheorem{rem}[thm]{Remark}
\newtheorem{lem}[thm]{Lemma}
\newtheorem{prop}[thm]{Proposition}
\newtheorem{defn}[thm]{Definition}
\newtheorem{ex}[thm]{Example}
\renewcommand{\leq}{\leqslant}
\renewcommand{\geq}{\geqslant}
\definecolor{darkred}{rgb}{0.6,0.1,0.1}
\definecolor{darkgreen}{rgb}{0.1,0.6,0.1}
\definecolor{darkblue}{rgb}{0.1,0.1,0.6}
\newcommand{\be}{\begin{equation}}
\newcommand{\ee}{\end{equation}}
\newcommand{\bes}{\begin{equation*}}
\newcommand{\ees}{\end{equation*}}
\newcommand{\bfig}{\begin{figure}}
\newcommand{\efig}{\end{figure}}
\newcommand{\bt}{\begin{table}}
\newcommand{\et}{\end{table}}
\newcommand{\bc}{\begin{center}}
\newcommand{\ec}{\end{center}}
\newcommand{\mt}[1]{\mathrm{#1}}
\def\st{\, \left|\right. \,}
\def\:{\colon}
\newcommand{\abs}[1]{\left\vert#1\right\vert}
\newcommand{\ap}[1]{\left\langle#1\right\rangle}
\newcommand{\norm}[1]{\left\Vert#1\right\Vert}
\def\grad{\nabla}
\DeclareMathOperator{\supp}{supp}
\DeclareMathOperator{\diam}{diam}
\DeclareMathOperator{\Hess}{Hess}
\def\R{\mathbb{R}} 
\def\P{\mathcal{P}} 
\def\M{M} 
\def\Cont{\mt{C}} 
\def\Lip{\mt{Lip}} 
\def\Sph{\mathbb{S}} 
\def\e{\varepsilon}
\def\d{\,\mathrm{d}}
\def\XXint#1#2#3{{\setbox0=\hbox{$#1{#2#3}{\int}$ }
	\vcenter{\hbox{$#2#3$ }}\kern-.6\wd0}}
\def\der{\mathrm{d}}
\def\p{\partial}
\def\bee{\boldsymbol{e}}
\def\bd{\mathcal{W}}
\def\U{\mathcal{U}} 
\def\V{\mathcal{V}} 
\def\barx{\bar{x}}
\def\barz{\bar{z}}
\def\tx{\tilde{x}}
\def\tz{\tilde{z}}
\def\dim{k}
\def\V{v}
\def\PsiX{\Psi^t_X}
\def\PsiY{\Psi^t_Y}
\def\S{{\mathcal{D}_{\varepsilon}}}
\def\Cyl{\mathbb{C}}
\def\C{\mathcal{B}_{\varepsilon}}
\def\North{N}
\def\Lip{\Lambda} 
\def\N{n}
\def\c{C}
\def\h{c}
\def\diam{\Delta}
\def\a{a}
\def\tf{\phi}
\def\pf{\zeta}
\begin{document}

\title{Well-posedness and asymptotic behaviour of an aggregation model with intrinsic interactions on sphere and other manifolds}

\author{Razvan C. Fetecau \thanks{Department of Mathematics, Simon Fraser University, Burnaby, BC V5A 1S6, Canada}
\and Hansol Park \thanks{Department of Mathematical Sciences, Seoul National University, Seoul 08826, Korea}
\and Francesco S. Patacchini \thanks{Department of Mathematical Sciences, Carnegie Mellon University, Pittsburgh, PA 15213, USA} \thanks{IFP Energies nouvelles, 1-4 avenue de Bois-Préau, 92852 Rueil-Malmaison, France}}

\date{\today}

\maketitle


\begin{abstract}
We investigate a model for collective behaviour with intrinsic interactions on Riemannian manifolds. We establish the well-posedness of measure-valued solutions (defined via mass transport) on sphere, as well as investigate the mean-field particle approximation.  We study the long-time behaviour of solutions to the model on sphere, where the primary goal is to establish sufficient conditions for a consensus state to form asymptotically. Well-posedness of solutions and the formation of consensus are also investigated for other manifolds (e.g., a hypercylinder).
\end{abstract}

\textbf{Keywords}: asymptotic consensus, intrinsic interactions, measure-valued solutions, particle methods, swarming on manifolds

\textbf{AMS Subject Classification}: 35A01, 35B40, 37C05, 58J90

\section{Introduction}
\label{sect:intro}

We consider a nonlocal aggregation model on a Riemannian manifold $\M$ that consists in the following evolution equation for a population density $\rho$ on $\M$: 
\begin{equation}
\partial_t \rho-\nabla_M \cdot(\rho \nabla_\M K\ast\rho)=0. \label{eqn:model}
\end{equation}
Here, $K\: \M\times \M \to \R$ is an interaction potential, which models social interactions such as attraction and repulsion, and $\nabla_\M \cdot$ and $\nabla_\M $ represent the manifold divergence and gradient, respectively. Also, for a time-dependent measure $\rho_t$ on $\M$, the convolution $K \ast \rho_t$ is given by:
\begin{equation} \label{eqn:conv}
	K * \rho_t(x) = \int_M K(x,y) \d \rho_t(y).
\end{equation}
In \eqref{eqn:model}, we restrict $\rho_t$ to be a probability measure on $\M$ for all $t$, i.e., $\int_M  \d \rho_t=1$ for all $t$.

There has been extensive research on model \eqref{eqn:model} in recent years. The model has many applications, in diverse areas such as swarming in biological groups \cite{M&K}, materials science and granular media \cite{CaMcVi2006}, self-assembly of nanoparticles \cite{HoPu2005}, robotics and space missions \cite{JiEgerstedt2007}, and opinion formation \cite{MotschTadmor2014}. Indeed, the model can capture a wide variety of self-collective or swarm behaviours, such as aggregations on disks, annuli, rings and soccer balls \cite{KoSuUmBe2011,Brecht_etal2011,BrechtUminsky2012}, making it very attractive for applications.  At the same time, model \eqref{eqn:model} in Euclidean space ($\M$ = $\R^\dim$) has been investigated thoroughly by PDE analysis. A partial list of issues addressed in analysis works include the well-posedness of the initial-value problem  \cite{Laurent2007, BertozziLaurent, Figalli_etal2011, BeLaRo2011}, the long-time behaviour of its solutions \cite{LeToBe2009, FeRa10, BertozziCarilloLaurent, FeHuKo11,FeHu13}, and the minimizers for the associated interaction energy \cite{Balague_etalARMA,ChFeTo2015,CaCaPa2015,SiSlTo2015}.

While model  \eqref{eqn:model} in Euclidean spaces has been well studied in literature, there have been far fewer works on the aggregation model posed on arbitrary surfaces or manifolds. In \cite{WuSlepcev2015,CarrilloSlepcevWu2016}, the authors investigate the well-posedness of the aggregation model \eqref{eqn:model} on certain subsets of $\R^\dim$ when interactions depend on the Euclidean distance in the ambient space. Specifically, it is assumed there that the interaction potential $K(x,y)$ is of the form $K(x,y) = K(|x- y|)$, where $|x-y|$ denotes the Euclidean distance in $\R^\dim$ between points $x$ and $y$ on $\M$. A similar assumption is made in various recent works on collective dynamics on matrix manifolds (e.g., orthogonal and unitary groups) \cite{HaKoRy2017,HaKoRy2018}. We will be referring to such models as models with {\em extrinsic} interactions. Emergent behaviours of swarming models and Fokker--Planck-type dynamics with extrinsic interactions on surfaces and manifolds have been investigated (both analytically and numerically) in various papers in recent years; see for instance \cite{LiSpong2014,Li2015,HaKoRy2017,HaKim2019,AlBrCa2020}.
 
In this paper we consider model \eqref{eqn:model} with an interaction potential of the form $K(x,y) = K(d(x,y))$, where $d(x,y)$ is the geodesic distance on $M$ between $x$ and $y$. In other words, we consider model \eqref{eqn:model} with {\em intrinsic} interactions. Such model was proposed and investigated recently in \cite{FeZh2019}, where the authors demonstrate the emergent self-collective behaviour of its solutions on sphere and hyperbolic plane. In particular, it is shown there that solutions can approach asymptotically a diverse set of steady states, that include constant density equilibria, concentrations on geodesic circles, aggregations on geodesic disks and annular regions, and others. Intrinsic interactions are motivated by applications of the model in engineering (robotics) \cite{Gazi:Passino, JiEgerstedt2007}, specifically when individual agents/robots are restricted by environment or mobility constraints to remain on a certain manifold. In such applications, efficient swarming must consider inter-individual  geodesic distances, and hence, be modelled by intrinsic interactions \cite{TronAfsariVidal2012, Markdahl2019}
 
We consider weak, measure-valued solutions to \eqref{eqn:model} defined in the mass transportation sense \cite{CanizoCarrilloRosado2011}. Indeed, equation \eqref{eqn:model} is in the form of a continuity equation, and in geometric terms it represents the transport of the measure $\rho$ along the flow on $\M$ generated by the tangent vector field $\V[\rho]= -\nabla_M K \ast \rho$, which depends on $\rho$ itself \cite{AGS2005}. This general framework includes for instance the case of an interacting particle system and hence it can be used to study particle approximations and mean-field limits. With this interpretation of solutions, a first goal of the present paper is to establish the well-posedness of solutions to model \eqref{eqn:model} set up on a sphere (Section \ref{sect:sphere}) and on a hypercylinder (Section \ref{sect:other}), the main results being stated in Theorems \ref{thm:well-posedness} and \ref{thm:well-posedness-cyl}, respectively. These are the first such results for model \eqref{eqn:model} with intrinsic interactions. We note here that an alternative approach to study well-posedness of solutions is using the theory of gradient flows in the space of probability measures on $\M$ endowed with the Riemannian $2$-Wasserstein metric \cite{AGS2005}. Such techniques were used for nonlinear diffusion equations in \cite{Lisini2009}, and for the interaction equation (with extrinsic interactions) in \cite{WuSlepcev2015}. The approach in our paper, which follows several previous studies of the interaction equation in $\R^\dim$ \cite{CanizoCarrilloRosado2011,CaChHa2014}, is less technical, as it amounts to working with flows of locally Lipschitz vector fields on a manifold. Also, the procedure leads naturally to the mean-field approximation (referred to in the literature as the Dobrushin technique), which can be very useful for numerical simulations.

In working with intrinsic interactions we have to deal with the regularity of the distance function, which is known to be smooth away from cut loci and the diagonal. For this reason we consider interaction potentials that depend on the squared distance $d(x,y)^2$ between points $x$ and $y$ on $\M$ (to avoid  singularities at $x=y$), and also restrict to subsets of the manifold for which no two points are in the cut locus of each other (e.g., an open hemisphere). Note that by this restriction any pair of points on the manifold can be connected by a unique minimizing geodesic. Intuitively, this avoids situations where two interacting particles could be connected by more than one minimizing geodesic and thus would not ``know'' which direction to follow (as for instance, two antipodal points on a sphere).

A second goal of the paper is to investigate the emergence of asymptotic consensus in solutions to model \eqref{eqn:model} with intrinsic interactions on sphere, on hypercylinder and, more generally, on certain product manifolds.
Consensus (also referred to in literature as synchronization or rendezvous) corresponds to an asymptotic state of a delta aggregation in one single point on $M$. Achieving consensus in a network of agents is a very important problem in robotic control  \cite{Sepulchre2011}, in particular when the interactions among agents are intrinsic, as in our paper \cite{TronAfsariVidal2012, Markdahl2019}. Also, such asymptotic states have been of central importance in the Kuramoto oscillator and related models \cite{HaKiPa2015,HaKoRy2017}, as well as in applications of the model to opinion formation \cite{MotschTadmor2014}.  In this paper we prove the formation of consensus equilibria for the intrinsic model on sphere with attractive potentials, as well as asymptotic consensus on certain product manifolds in the specific case of a quadratic interaction potential. To the best of our knowledge, this is the first systematic study of asymptotic behaviour of intrinsic models.
 
The summary of the paper is as follows. In Section \ref{sect:prelims} we present some preliminaries, in particular the notion of solution, some useful results regarding Wasserstein distances, and the main assumption on the interaction potential $K$. Section \ref{sect:sphere} is concerned with the well-posedness of solutions to model \eqref{eqn:model} on sphere (the main result being given in Theorem \ref{thm:well-posedness}), and their stability and mean-field approximation. In Section \ref{sect:synchronization} we investigate the asymptotic behaviour of solutions to model \eqref{eqn:model} on sphere, specifically the formation of consensus equilibria in the continuum and discrete models (Theorems \ref{thm:consensus-cont} and \ref{thm:consensus-disc}). In Section \ref{sect:other} we consider other manifolds (e.g., a hypercylinder) for which we investigate issues such as well-posedness and consensus formation. Finally, Appendix contains some fundamental concepts needed to support the work in the paper, such as general facts on flows on manifolds and how they apply to an interaction velocity field.


\section{Preliminaries and general considerations}
\label{sect:prelims}

Let $\M$ be a smooth, complete and connected $\dim$-dimensional Riemannian manifold, with intrinsic distance $d$. We denote by $\ap{u,v}_x$ and $\norm{u}_x$ the tangent inner product and norm, respectively, for $u,v\in T_x\M$ and $x\in \M$, where $T_x\M$ stands for the tangent space of $\M$ at $x$. The tangent bundle is denoted by $T\M$. We emphasize that throughout this section the manifold $\M$ is not necessarily embedded in $\R^{\dim+1}$.

Unless otherwise mentioned, throughout this paper we use $T\in(0,\infty]$ to denote a generic final time (usually related to existence of solutions) and $\U$ denotes a generic open subset of $\M$.

\subsection{Vector fields and flows on manifolds}
\label{subsect:flows}
Consider a time-dependent vector field $X$ on $\U\times [0,\a)$, for some $\a\in(0,\infty]$, that is, $X \: \U \times [0,\a) \to T\M$ with $X(x,t) \in T_x\M$ for all $(x,t)\in \U\times [0,\a)$. We shall often use the $X_t$ for $X(\cdot,t)$.

Given $\Sigma \subset \U$, a \emph{flow map} generated by $(X,\Sigma)$ is a function $\Psi_X\: \Sigma \times [0,\tau) \to \U$, for some $\tau\leq \a$, that satisfies, for all $x\in\Sigma$ and $t\in[0,\tau)$,
\begin{equation} \label{eq:characteristics-general}
	\begin{cases} \dfrac{\der}{\der t} \Psi^t_X(x) = X_t(\Psi^t_X(x)),\\[10pt]
	\Psi^0_X(x) = x, \end{cases}
\end{equation}
where we used the abbreviation $\Psi^t_X$ for $\Psi_X(\cdot,t)$, which we shall do throughout. Furthermore, a flow map is said to be \emph{maximal} if its time domain cannot be extended while \eqref{eq:characteristics-general} holds; it is said to be \emph{global} if $\tau=\a=\infty$ and \emph{local} otherwise.
In the present paper we are interested in flow maps generated by the velocity field $v[\rho]$ of the interaction equation (see \eqref{eqn:v-field} below), with $\Sigma$ being the support of the initial measure $\rho_0$. In such case we will omit $\Sigma$ and simply say that $v[\rho]$, instead of $(v[\rho],\supp(\rho_0))$, generates a flow map.

The local existence and uniqueness of a flow map follows from standard theory of dynamical systems on manifolds whenever the set $\Sigma$ above is compact; see \cite[Chapter~9]{Lee2013} or \cite[Chapter~4]{AMR1988} for instance. We review some of this theory in Appendix \ref{subsect:A-wp}. In brief, by working in charts and using local coordinates, for a vector field that satisfies a Lipschitz property on charts (see Definition \ref{defn:Lip}) one can make use of standard ODE theory in Euclidean space $\R^\dim$ to establish the local well-posedness of flow maps (Theorem \ref{thm:Cauchy-Lip}). As for $\Sigma$ being compact, this is required to ensure that the maximal time of existence of the flow map is strictly positive.



\subsection{Notion of solution} \label{subsect:solution}

As already mentioned, for the sake of generality, and also because of our future considerations on particle solutions (see Theorem \ref{thm:mfl}), we are interested in defining measure-valued solutions to \eqref{eqn:model}. To this end, denote by $\P(\U)$ the set of Borel probability measures on the metric space $(\U,d)$ and by $\Cont([0,T);\P(\U))$ the set of continuous curves from $[0,T)$ into $\P(\U)$ endowed with the narrow topology. Recall that a sequence $(\rho^n)_{n\geq 1} \subset \P(\U)$ \emph{converges narrowly} to $\rho \in \P(\U)$ if 
\bes
	\int_\U \tf(x) \d\rho^n(x) \to \int_\U \tf(x) \d\rho(x), \qquad \mbox{as $n\to\infty$, for all $\tf \in\Cont_\mt{b}(\U)$,}
\ees
where $\Cont_\mt{b}(\U)$ is the set of continuous and bounded functions on $\U$. 

We denote by $\Psi \# \rho$ the \emph{push-forward} in the mass transportation sense of $\rho$ through a map $\Psi\: \Sigma \to \U$ for some $\Sigma\subset \U$, that is, $\Psi\#\rho$ is the probability measure such that for every measurable function $\zeta\: \U \to [-\infty,\infty]$ with $\zeta\circ \Psi$ integrable with respect to $\rho$, we have
\bes
	\int_\U \zeta(x) \d (\Psi \# \rho)(x) = \int_\Sigma \zeta(\Psi(x)) \d\rho(x).
\ees

Also, for any curve $(\rho_t)_{t\in [0,T)} \subset \P(\U)$, denote by $\V[\rho]\: \U \times [0,T) \to T\M$ the velocity vector field associated to \eqref{eqn:model}, that is,
\begin{equation} \label{eqn:v-field}
	\V[\rho] (x,t) =  -\grad_\M K *\rho_t (x), \qquad \mbox{for all $(x,t) \in \U \times [0,T)$},
\end{equation}
where for convenience we used $\rho_t$ in place of $\rho(t)$, as we shall often do in the following. The convolution in this context is defined as follows: for $h\: \M\times\M \to \R$ and $\rho\in\P(\U)$,
\bes
	h*\rho(x) := \int_\U h(x,y) \d\rho(y).
\ees

Recall the standard notion of solution in the sense of distributions: we say that a curve $(\rho_t)_{t\in [0,T)} \subset \P(\U)$ is a \emph{weak solution in the sense of distributions} to \eqref{eqn:model} if \begin{equation}
	\int_0^T \int_\U \left( \p_t \tf(x,t) + \langle \V[\rho] (x,t), \grad_\M \tf(x,t) \rangle_x \right)  \d\rho_t(x) \d t = 0,  \qquad \mbox{for all $\tf \in \Cont_\mt{c}^\infty(\U \times (0,T))$,}
\end{equation}
where $\Cont_\mt{c}^\infty(\U\times (0,T))$ is the set of smooth and compactly supported functions on $\U\times(0,T)$. For this definition, we implicitly suppose that 
\be\label{eq:bound-distrib}
	\int_S \int_Q \| \V[\rho] (x,t) \|_x \d\rho_t(x) \d t < \infty, \qquad \text{ for all compact sets $S\subset (0,T)$ and $Q \subset \U$}, 
\ee
to ensure, by the Cauchy--Schwarz inequality, that the left-hand side in the definition is finite. 

A solution in the distributional sense can be described in a stronger sense, which is more intuitive and more geometric, as the push-forward of the initial data through the corresponding flow map \cite[Chapter~8.1]{AGS2005}. Indeed, the following result holds: 
\begin{lem}\label{lem:distrib-1}
	Let $(\rho_t)_{t\in[0,T)} \subset \P(\U)$ and suppose that $\V[\rho]$ generates a flow map $\Psi_{\V[\rho]}$ defined on $\supp(\rho_0)\times [0,T)$ and satisfies \eqref{eq:bound-distrib}. Furthermore, assume that $\rho$ satisfies the implicit relation
\be \label{eq:rho-push-forward}
	\rho_t = \Psi^t_{\V[\rho]} \# \rho_0, \qquad \mbox{for all $t\in[0,T)$}.
\ee
Then, $\rho$ belongs to $\Cont([0,T);\P(\U))$ and is a weak solution in the sense of distributions to equation \eqref{eqn:model}.
\end{lem}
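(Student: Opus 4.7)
The plan is to exploit the push-forward relation $\rho_t=\Psi^t_{\V[\rho]}\#\rho_0$ to reduce every claim to a statement about a fixed measure $\rho_0$ transported along an explicit $\Cont^1$-in-time flow. First I would establish narrow continuity: for any $\tf\in\Cont_\mt{b}(\U)$, the push-forward yields
\bes
\int_\U \tf(x)\,\d\rho_t(x) = \int_\U \tf(\Psi^t_{\V[\rho]}(x))\,\d\rho_0(x).
\ees
Since \eqref{eq:characteristics-general} makes $t\mapsto \Psi^t_{\V[\rho]}(x)$ continuous for each $x\in\supp(\rho_0)$, the integrand on the right is continuous in $t$ pointwise on $\supp(\rho_0)$. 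Dominated convergence with the constant majorant $\norm{\tf}_\infty$ then gives continuity of $t\mapsto \int_\U \tf\,\d\rho_t$, i.e., $\rho\in\Cont([0,T);\P(\U))$.

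For the distributional identity, fix $\tf\in\Cont_\mt{c}^\infty(\U\times(0,T))$ and choose $0<t_0<t_1<T$ with $\supp\tf\subset \U\times[t_0,t_1]$. Along each flow trajectory, the chain rule together with the ODE in \eqref{eq:characteristics-general} gives
\bes
\frac{\der}{\der t}\tf(\Psi^t_{\V[\rho]}(x),t) = \p_t\tf(\Psi^t_{\V[\rho]}(x),t) + \langle \V[\rho](\Psi^t_{\V[\rho]}(x),t),\grad_\M\tf(\Psi^t_{\V[\rho]}(x),t)\rangle_{\Psi^t_{\V[\rho]}(x)}.
\ees
Integrating this identity in $t$ from $0$ to $T$, the left-hand side integrates to zero because $\tf(\Psi^t_{\V[\rho]}(x),t)$ vanishes at both endpoints of $[t_0,t_1]$ (and outside). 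Next I would integrate the remaining identity against $\rho_0$, swap the order of integration via Fubini, and convert each inner spatial integral back through \eqref{eq:rho-push-forward} so as to express the result entirely in terms of $\rho_t$. This produces exactly the weak formulation in the statement.

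The main technical obstacle is justifying Fubini and the pointwise chain rule for $\tf\circ\Psi^t_{\V[\rho]}$. The chain rule is valid at each fixed $x\in\supp(\rho_0)$ because $t\mapsto \Psi^t_{\V[\rho]}(x)$ is $\Cont^1$ by \eqref{eq:characteristics-general} and $\tf$ is smooth. For Fubini, the $\p_t\tf$ contribution is bounded by $\norm{\p_t\tf}_\infty$ and supported in a compact time interval, so it is trivially integrable against $\d\rho_0\otimes\d t$. For the $\V[\rho]$ contribution, pushing forward back to $\rho_t$ bounds the integrand by $\norm{\grad_\M\tf}_\infty\,\norm{\V[\rho](x,t)}_x$ on the compact set $Q\times S$ where $Q\subset\U$ contains the spatial support of $\tf$ and $S=[t_0,t_1]$; finiteness on this set is precisely the hypothesis \eqref{eq:bound-distrib}. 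With Fubini justified in both cases, the desired distributional identity follows.
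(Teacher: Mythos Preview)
Your proposal is correct and follows essentially the same approach as the paper: both use the chain rule along flow trajectories to differentiate $t\mapsto\tf(\Psi^t_{\V[\rho]}(x),t)$, justify Fubini via the push-forward identity and hypothesis \eqref{eq:bound-distrib}, and obtain narrow continuity by dominated convergence. The only cosmetic differences are that the paper proves continuity last rather than first, and bounds the full integrand $|\pf_x'(t)|$ at once rather than treating the $\p_t\tf$ and $\V[\rho]$ contributions separately.
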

The proof follows closely \cite[Lemma 8.1.6]{AGS2005}. For completeness, we provide it in Appendix \ref{subsect:A-sol}. In other words, it suffices to find a curve of the form \eqref{eq:rho-push-forward} satisfying \eqref{eq:bound-distrib} to show existence of a solution in the sense of distributions to the interaction equation. This motivates the following definition of weak, or measure, solution (see also \cite{CanizoCarrilloRosado2011}):
\begin{defn}[Notion of solution]
	We say that $(\rho_t)_{t\in[0,T)} \subset \P(\U)$ is a \emph{weak solution} to \eqref{eqn:model} if $\V[\rho]$ generates a unique flow map $\Psi_{\V[\rho]}$ defined on $\supp(\rho_0)\times [0,T)$ and \eqref{eq:rho-push-forward} holds.
\end{defn}

From the proof of Lemma \ref{lem:distrib-1} in Appendix \ref{subsect:A-sol}, we see that any weak solution belongs to $\Cont([0,T);\P(\U))$, whether or not it satisfies \eqref{eq:bound-distrib}.


\subsection{Wasserstein distance}
\label{subsect:wasserstein}

To compare solutions to \eqref{eqn:model} we will use the intrinsic $1$-Wasserstein distance: for all $\rho,\sigma \in \P(\U)$,
\bes
	W_1(\rho,\sigma) = \inf_{\pi \in \Pi(\rho,\sigma)} \int_{\U\times\U} d(x,y) \d\pi(x,y),
\ees
where $\Pi(\rho,\sigma) \subset \P(\U\times\U)$ is the set of transport plans between $\rho$ and $\sigma$, i.e., the set of elements in $\P(\U\times\U)$ with first and second marginals $\rho$ and $\sigma$, respectively. 

We write $\P_1(\U)$ the set of probability measures on $\U$ with finite first moment and $\P_\infty(\U) \subset \P_1(\U)$ the set of probability measures on $\U$ with compact support; we have that $(\P_1(\U),W_1)$ (and thus $(\P_\infty(\U),W_1)$) is a well-defined metric space. We furthermore metrize the space $\Cont([0,T);\P_1(\U))$ (and thus $\Cont([0,T);\P_\infty(\U))$) with the distance defined by
\bes
	\bd_1(\rho,\sigma) = \sup_{t \in [0,T)} W_1(\rho_t,\sigma_t), \qquad \mbox{for all $\rho,\sigma \in \Cont([0,T);\P_1(\U))$}.
\ees

We give a preliminary lemma first, analogous to results in \cite[Lemmas 3.11--3.13]{CanizoCarrilloRosado2011}, which considers various Lipschitz properties of $W_1$.
\begin{lem}\label{lem:preliminary}
	The following four statements hold.
	\begin{enumerate}[label=(\roman*)]
		\item\label{it:prel1} Let $\Sigma\subset \U$. Let furthermore $\rho \in \P_1(\U)$ with $\supp(\rho) \subset \Sigma$ and $\Psi_1,\Psi_2\:\Sigma \to \U$ be measurable functions. Then,
			\bes
				W_1({\Psi_1}\#\rho,{\Psi_2}\#\rho) \leq \sup_{x \in \supp(\rho)} d(\Psi_1(x),\Psi_2(x)).
			\ees
		\item\label{it:prel2} Let $\a\in(0,\infty]$ and let $X$ be a time-dependent vector field on $\U\times[0,\a)$. Let $\rho \in \P_1(\U)$ and suppose that $(X,\supp(\rho))$ generates a flow map $\Psi_X$ defined on $\supp(\rho)\times[0,\tau)$ for some $\tau\leq \a$. Suppose furthermore that $X$ is bounded on $\U\times [0,\tau)$, i.e., there exists $C>0$ such that $\norm{X(x,t)}_{x\in \U}<C$ for all $x\in \U$ and $t\in[0,\tau)$. Then,
			\bes
				W_1({\Psi^t_X} \# \rho,{\Psi^s_X} \# \rho) \leq C|t-s|, \quad \quad \mbox{for all $t,s \in [0,\tau)$}.
			\ees
		\item\label{it:prel3} Let $\Sigma\subset\U$ and let $\Psi\: \Sigma \to \U$ be Lipschitz continuous as a map from the metric space $(\Sigma,d)$ into the metric space $(\U,d)$; denote by $L_\Psi$ its Lipschitz constant. Moreover, let $\rho,\sigma \in \P_\infty(\U)$. Then,
			\bes
				W_1(\Psi\#\rho,\Psi\#\sigma) \leq L_\Psi W_1(\rho,\sigma).
			\ees
	\end{enumerate}
\end{lem}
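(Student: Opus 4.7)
In all three parts the strategy is the same: bound $W_1$ above by exhibiting an explicit transport plan and then estimating the resulting integral. For \ref{it:prel1}, the natural candidate is $\pi := (\Psi_1,\Psi_2)\#\rho \in \P(\U\times\U)$, whose first and second marginals are by construction $\Psi_1\#\rho$ and $\Psi_2\#\rho$, so $\pi \in \Pi(\Psi_1\#\rho,\Psi_2\#\rho)$. The change-of-variables formula for push-forwards then gives
\bes
W_1(\Psi_1\#\rho,\Psi_2\#\rho) \leq \int_{\U\times\U} d(x,y)\,\d\pi(x,y) = \int_{\supp(\rho)} d(\Psi_1(x),\Psi_2(x))\,\d\rho(x),
\ees
and since $\rho$ is a probability measure the right-hand side is bounded above by the pointwise supremum on $\supp(\rho)$.

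To prove \ref{it:prel2}, I would apply \ref{it:prel1} with $\Psi_1 = \Psi^t_X$ and $\Psi_2 = \Psi^s_X$ (restricted to $\supp(\rho)$), reducing the claim to the pointwise estimate $d(\Psi^t_X(x),\Psi^s_X(x)) \leq C|t-s|$ for every $x \in \supp(\rho)$. For fixed $x$, the curve $r \mapsto \Psi^r_X(x)$ is $\Cont^1$ on $[0,\tau)$ by the flow equation \eqref{eq:characteristics-general}, with tangent norm $\|X_r(\Psi^r_X(x))\|_{\Psi^r_X(x)} < C$ by hypothesis. Its Riemannian length between times $s$ and $t$ is therefore at most $C|t-s|$, and completeness and connectedness of $\M$ then ensure that the intrinsic distance between its endpoints is no greater than this length.

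For \ref{it:prel3}, I would pick any $\pi \in \Pi(\rho,\sigma)$ and push it forward through the diagonal map $(\Psi,\Psi)$ to obtain a plan in $\Pi(\Psi\#\rho,\Psi\#\sigma)$; the marginal check is immediate. The Lipschitz bound on $\Psi$ then yields
\bes
W_1(\Psi\#\rho,\Psi\#\sigma) \leq \int_{\U\times\U} d(\Psi(x),\Psi(y))\,\d\pi(x,y) \leq L_\Psi \int_{\U\times\U} d(x,y)\,\d\pi(x,y),
\ees
and taking the infimum over $\pi \in \Pi(\rho,\sigma)$ concludes; compactness of the supports enters only to guarantee $W_1(\rho,\sigma) < \infty$ so that the inequality is substantive. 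I do not anticipate any genuine obstacle; the one step that requires real geometric content is the length-versus-distance comparison in \ref{it:prel2}, which is a standard consequence of the definition of $d$ as an infimum over lengths of piecewise-smooth curves on the complete connected manifold $\M$.
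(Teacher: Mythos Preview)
Your proposal is correct and follows essentially the same route as the paper: the same transport plans $(\Psi_1,\Psi_2)\#\rho$ and $(\Psi,\Psi)\#\pi$ are used in \ref{it:prel1} and \ref{it:prel3}, and the same length-of-flow-curve argument is used in \ref{it:prel2}. The only cosmetic difference is that in \ref{it:prel3} the paper picks an optimal plan $\pi$ from the outset rather than infimizing at the end.
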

\begin{proof}
	Let us first show \ref{it:prel1}. Consider the transport plan given by $\pi = (\Psi_1, \Psi_2)\#\rho$, where we define $(\Psi_1,\Psi_2)\: \Sigma \to \U\times\U$ by
\bes
	(\Psi_1,\Psi_2)(x) = (\Psi_1(x),\Psi_2(x)), \qquad \mbox{for all $x \in \Sigma$.}
\ees
Then, $\pi$ has ${\Psi_1}\#\rho$ and ${\Psi_2}\#\rho$ as first and second marginals, respectively, and therefore $\pi \in \Pi({\Psi_1}\#\rho,{\Psi_2}\#\rho)$. We get
\begin{align*}
	W_1({\Psi_1}\#\rho,{\Psi_2}\#\rho) &\leq \int_{\U\times\U} d(x,y) \d \pi(x,y) = \int_{\supp(\rho)} d(\Psi_1(x),\Psi_2(x)) \d\rho(x)\\
	&\leq \sup_{x\in \supp(\rho)} d(\Psi_1(x),\Psi_2(x)),
\end{align*}
where we used that $\rho$ is a probability measure on $\U$.

Let us now prove \ref{it:prel2}. Let $t,s\in[0,\tau)$. We have, from \ref{it:prel1}, 
\begin{equation}\label{eq:W1-flow}
	W_1({\Psi^t_X}\# \rho,{\Psi^s_X}\# \rho) \leq \sup_{x \in \supp(\rho)} d(\Psi^t_X(x),\Psi^s_X(x)).
\end{equation}
Without loss of generality, assume $t>s$ and with $x$ fixed in $\supp(\rho)$ consider the curve $b \mapsto \Psi^b_X(x)$ on $\M$, with $s \leq b \leq t$. The length $\mathcal{L}$ of this curve, that joins $\Psi^s_X(x)$ and $\Psi^t_X(x)$, can be bounded above using \eqref{eq:characteristics-general} as: 
\be\label{eq:length-vector-field}
	\mathcal{L} = \int_s^t \| X_b(\Psi^b_X(x))\|_{\Psi^b_X(x)} \d b \leq C |t-s|.
\ee
The conclusion now comes from \eqref{eq:length-vector-field}, \eqref{eq:W1-flow}, and the fact that $d(\Psi^t_X(x),\Psi^s_X(x)) \leq \mathcal{L}$.

Finally, let us prove \ref{it:prel3}. Let $\pi$ be an optimal transport plan between $\rho$ and $\sigma$, so that $\supp(\pi) \subset \U\times \U$. Then the plan $\bar\pi = (\Psi,\Psi)\#\pi$ has $\Psi\#\rho$ and $\Psi\#\sigma$ as first and second marginals, respectively, so that $\bar\pi \in \Pi(\Psi\#\rho,\Psi\#\sigma)$. Thus,
\begin{align*}
	W_1(\Psi\#\rho,\Psi\#\sigma) &\leq \int_{\U\times\U} d(x,y) \d \bar\pi(x,y)\\
	&= \int_{\Sigma\times\Sigma} d(\Psi(x),\Psi(y)) \d \pi(x,y)\\ 
	&\leq L_\Psi \int_{\Sigma\times \Sigma} d(x,y) \d\pi(x,y) \leq L_\Psi W_1(\rho,\sigma). \qedhere
\end{align*}
\end{proof}

Note that Lemma \ref{lem:preliminary}\ref{it:prel2} shows that if the velocity field $v[\sigma]$ is bounded for any curve $\sigma$ on $[0,T)$ of probability measures, then any weak solution $\rho$ on $[0,T)$ to the interaction equation starting from an element in $\P_1(\U)$ is in fact Lipschitz continuous in time, and in particular absolutely continuous in time. Indeed, in this case,
\bes
	W_1(\rho_t,\rho_s) = W_1(\Psi_{\V[\rho]}^t\#\rho_0,\Psi_{\V[\rho]}^s\#\rho_0) \leq C |t-s| \quad \mbox{for all $t,s \in [0,T)$},
\ees
where $\Psi_{\V[\rho]}$ is the unique flow map generated by $\V[\rho]$ on the time interval $[0,T)$ and $C$ is the constant from Lemma \ref{lem:preliminary}\ref{it:prel2}.


\subsection{Assumption on the interaction potential}
\label{subsect:potential}

We assume that $K\:\M\times\M \to \R$ depends only on the intrinsic distance $d$ on $\M$. To avoid issues regarding the differentiability of the distance function on the diagonal $\{(x,y) \in \M\times \M \st x = y\}$, we take in fact $K$ to depend on the squared distance function instead. Specifically, we make the following assumption on the interaction potential:

\begin{enumerate}[label=\textbf{(H)}]
\item \label{hyp:K} $K\: \M \times \M \to \R$ has the form
	\begin{equation}
	\label{eqn:K-gen}
		K(x,y) = g(d(x,y)^2), \qquad \mbox{for all } x,y\in \M,
	\end{equation}
	where $g\: [0,\infty) \to \R$ is differentiable, with locally Lipschitz continuous derivative. 
\end{enumerate}

In the following we use the notation $K_y(x)$ for $K(x,y)$ and $d_y(x)$ for $d(x,y)$. Given the expression \eqref{eqn:K-gen} of $K$, its gradient can be computed as
\begin{equation}
\label{eqn:gradK-gen}
	\nabla_\M K_y(x) = -2 g'(d(x,y)^2) \log_x y, 
\end{equation}
where we used the chain rule and the fact that 
\begin{equation}
\label{eqn:gradd}
	\nabla_{M} d_y(x) = -\frac{\log_x y}{d(x,y)}, \qquad \mbox{for $x\neq y$}.
\end{equation}
Here, $\log_x y$ denotes the Riemannian logarithm map (i.e., the inverse of the Riemannian exponential map) on $\M$ \cite{Petersen2006}. Equations \eqref{eqn:gradK-gen} and \eqref{eqn:gradd} only hold for points $y$ within the injectivity radius of $M$ at $x$ (or, equivalently, away from the cut locus of $x$). To ensure that these formulas hold, we shall therefore restrict in the following to an open subset $\U$ of $\M$ which is geodesically convex; we remark that, in particular, this implies that $\U$ can be covered by a single chart.

We also note here that the physical interpretation of \eqref{eqn:model} as an aggregation model is encoded in \eqref{eqn:v-field} and \eqref{eqn:gradK-gen}. Specifically, by interacting with a point mass at location $y$, the point mass at $x$ is driven by a force of magnitude proportional to $|g'(d(x,y)^2|d(x,y)$, to move either towards $y$ (provided $g'(d(x,y)^2) >0$) or away from $y$  (provided $g'(d(x,y)^2) < 0$). The velocity field at $x$ computed by \eqref{eqn:v-field} takes into account all contributions from interactions with point masses $y \in M$ through the nonlocality induced by the convolution. 


\section{Intrinsic aggregation model on the unit sphere}
 \label{sect:sphere}
In this section we take the Riemannian manifold $\M$ to be the $\dim$-dimensional unit sphere $\Sph^{\dim}$ and show the well-posedness of model \eqref{eqn:model} in the case when the dynamics is restricted to a geodesically convex subset of an open hemisphere. Note that here, by compactness, $\P_1(\Sph^{\dim}) = \P_\infty(\Sph^\dim) = \P(\Sph^{\dim})$.

We equip $\Sph^\dim$ with the induced metric from $\R^{\dim+1}$; in particular, this means that we shall equivalently regard points in $\Sph^\dim$ and tangent vectors of $\Sph^\dim$ as vectors in $\R^{\dim+1}$, with the property that $\ap{u,v}_x=u\cdot v$ for all $u,v\in T_x\Sph^\dim$ and $x\in\Sph^\dim$, where $u\cdot v$ stands for the canonical inner product in $\R^{\dim+1}$ of $u$ and $v$.


\subsection{Intrinsic distance}
\label{subsect:distanceS}

Given $x,y \in \Sph^\dim$, the Riemannian, or intrinsic, distance between points $x,y \in \Sph^{\dim}$ is given by:
\bes
	d(x,y) = \theta_{xy} \in [0,\pi], 
\ees
where $\theta_{xy} = \arccos(x \cdot y)$ represents the angle made by the vectors $x$ and $y$. Based on the observation above, to have a well-defined gradient of the distance function, we consider a subset of the sphere where no two points are in the cut locus of each other. Specifically, fix any $\e\in (0,\pi/2)$ and without any loss of generality choose the following open and geodesically convex subset:
\begin{equation}
\label{eqn:setS}
	\S = \left\{ x \in \Sph^{\dim} \st d(x,\North) < \frac\pi2 - \e \right\},
\end{equation}
where $\North = (0,\dots,0,1)$ represents the North pole of the unit sphere. Note that the maximum distance on $\S$ is bounded by $\pi - 2 \e<\pi$. 

On $\S$, which here plays the role of $\U$ in the general setting of the previous section, the logarithm map is given explicitly by
\begin{equation}
\label{eqn:log-sphere}
	\log_x y = \frac{\theta_{xy}}{\sin(\theta_{xy})} (y - \cos (\theta_{xy}) x), \qquad \mbox{for all $x,y\in \S$},
\end{equation}
and by \eqref{eqn:gradd},
\begin{equation}\label{eqn:gradd-sphere}
	\grad_{\Sph^{\dim}} d_y(x)  = \frac{\cos(\theta_{xy}) x -y}{\sin (\theta_{xy})}, \qquad \mbox{for all $x,y\in \S$ with $x\neq y$}.
\end{equation}
As $d$ is a distance function, one can check indeed that $|\grad_{\Sph^{\dim}} d_y(x) | =1$ for all $x,y\in\S$ with $x\neq y$.

For convenience of notation, set $f(\theta) := \theta/\sin(\theta)$ for $\theta\in[0,\pi)$, and hence, for all $x,y\in\S$,
\begin{equation}
	\grad_{\Sph^{\dim}} d^2_y(x)  = 2 f(\theta_{xy})(\cos(\theta_{xy}) x -y).
	\label{eqn:gradd2-sphere}
\end{equation}
Note that $f(\theta) \to \infty$ as $\theta\to\pi$, which illustrates quantitatively why we need to restrict to a geodesically convex subset of $\Sph^{\dim}$: this prevents the gradient of the squared distance from blowing up by not allowing any points $x$ and $y$ to be in the cut locus of each other (i.e., from being antipodal and have $\theta_{xy}=\pi$).

As $f$ and $f'$ are bounded on $[0,\pi - 2 \e]$, denote:
\bes
	C_f(\e) := \sup_{\theta \in [0,\pi - 2 \e]} f(\theta), \qquad L_f(\e) := \sup_{\theta \in [0,\pi - 2 \e]} f'(\theta).
\ees
Both $C_f(\e)$ and $L_f(\e)$ blow up as $\e \to 0$, which justifies the choice of $\e>0$ in the definition of $\S$. Also, since by Assumption \ref{hyp:K} the function $g'$ is locally Lipschitz continuous, denote by $C_{g'}(\e)$ and $L_{g'}(\e)$ the $L^\infty$ norm and the Lipschitz constants of $g'$ on $[0,(\pi - 2 \e)^2]$, respectively.


\subsection{Vector fields on $\S$}
\label{subsect:vectorsS}

Our approach in what follows relies on the fact that $\Sph^\dim$ is embedded in $\R^{\dim+1}$, which allows us to view vector fields in $\S$ as vector fields in $\R^{\dim+1}$ and in particular, to take the difference of tangent vectors at different points of $\S$. We give here two important lemmas for flows of Lipschitz vector fields on $\S$. We will require that the vector fields satisfy a Lipschitz condition (see \eqref{eq:Lipschitz-X}) with respect to the norm of the ambient space $\R^{\dim+1}$, denoted by $\abs{\cdot}$. 
As shown later in Lemma \ref{lem:dist-flow-maps-K}, the vector field associated to the interaction equation satisfies indeed this Lipschitz property.

\begin{lem}\label{lem:dist-flow-maps}
	Let $X,Y$ be two time-dependent vector fields on $\S$. Let $\Sigma\subset \S$ and suppose that $\Psi_X$ and $\Psi_Y$ are flow maps defined on $\Sigma\times [0,\tau)$, for some $\tau >0$, generated by $(X,\Sigma)$ and $(Y,\Sigma)$, respectively. Assume furthermore that $X$ is bounded on $\S\times[0,\tau)$ and Lipschitz continuous with respect to its first variable (uniformly with respect to $t \in [0,\tau)$) on $\S\times [0,\tau)$, i.e., there exists $L_X>0$ such that
	\be 
		|X(x,t) - X(y,t)| \leq L_X\, d(x,y), \quad \quad \mbox{for all $(x,y,t) \in \S \times \S \times [0,\tau)$}.
	\label{eq:Lipschitz-X}	
	\ee
	Then, for all $p\in\Sigma$,
	\bes
		d(\PsiX(p),\PsiY(p)) \leq \frac{e^{(L_X+2\|X\|_{L^\infty(\S\times [0,\tau))})t} -1}{L_X+2\|X\|_{L^\infty(\S\times [0,\tau))}} \|X-Y\|_{L^\infty(\S\times [0,\tau))},\quad \quad \mbox{for all $t\in[0,\tau)$}.
	\ees
\end{lem}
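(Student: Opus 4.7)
The plan is a Grönwall-type argument applied to $\phi(t) := d(\Psi^t_X(p), \Psi^t_Y(p))$, which satisfies $\phi(0) = 0$. If one establishes a differential inequality of the form
\bes
	\phi'(t) \leq \bigl(L_X + 2\|X\|_{L^\infty(\S\times[0,\tau))}\bigr)\,\phi(t) + \|X - Y\|_{L^\infty(\S\times[0,\tau))},
\ees
then the stated bound follows immediately from the integral form of Grönwall's lemma, since the initial condition $\phi(0) = 0$ produces exactly the prefactor $(e^{Ct}-1)/C$ with $C = L_X + 2\|X\|_{L^\infty}$.

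To derive this inequality I would exploit the embedding $\Sph^\dim \subset \R^{\dim+1}$, viewing both points and tangent vectors as elements of the ambient space. Set $\alpha(t) := \Psi^t_X(p)$ and $\beta(t) := \Psi^t_Y(p)$, and let $\gamma_t\: [0,\phi(t)] \to \S$ denote the unique unit-speed minimizing geodesic from $\alpha(t)$ to $\beta(t)$; such a geodesic exists and is unique because $\S$ is geodesically convex and strictly contained in an open hemisphere, so no two points of $\S$ lie in each other's cut locus. The first variation formula for arc length then yields
\bes
	\phi'(t) = \dot\gamma_t(\phi(t))\cdot Y_t(\beta(t)) - \dot\gamma_t(0) \cdot X_t(\alpha(t)),
\ees
where the dots denote the ambient $\R^{\dim+1}$ inner product. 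Adding and subtracting the mixed term $\dot\gamma_t(\phi)\cdot X_t(\beta)$ splits the right-hand side into three pieces: one bounded by $\|X - Y\|_{L^\infty}$ (swapping $Y_t(\beta)$ for $X_t(\beta)$); one bounded by $L_X \phi$ via the Lipschitz hypothesis \eqref{eq:Lipschitz-X} (comparing $X_t$ at $\alpha$ versus $\beta$); and one controlled by $\|X\|_{L^\infty} \cdot |\dot\gamma_t(\phi) - \dot\gamma_t(0)|$, coming from the change of tangent direction along $\gamma_t$. Since $\gamma_t$ is a great-circle arc of length $\phi$ on the unit sphere, $|\dot\gamma_t(\phi) - \dot\gamma_t(0)| = 2\sin(\phi/2) \leq \phi$ by a direct computation in $\R^{\dim+1}$, and a careful symmetric treatment at the two endpoints of the geodesic accounts for the factor $2$ in front of $\|X\|_{L^\infty}$ in the final constant.

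The main obstacle I anticipate lies in the third piece: namely, relating the ambient Euclidean difference of tangent vectors at two different points of $\Sph^\dim$ to the intrinsic arc length, with the correct constant. This step is where the constant curvature of the sphere and its explicit parallel-transport structure enter. The geodesic convexity of $\S$ and its sub-hemispherical location are essential throughout, ensuring uniqueness of $\gamma_t$, boundedness of $\phi(t) < \pi$, and the smoothness of the Riemannian logarithm map needed to justify the first variation formula. Once the differential inequality is in place, Grönwall's lemma with $\phi(0) = 0$ closes the argument.
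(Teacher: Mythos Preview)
Your approach is essentially identical to the paper's: the first variation formula you invoke is exactly the paper's computation $\frac{\der}{\der t}d(\Psi^t_X(p),\Psi^t_Y(p))=I+II$ rewritten in geodesic-tangent notation (since $-\dot\gamma_t(0)=\nabla_{\Sph^\dim}d_\beta(\alpha)$ and $\dot\gamma_t(\phi)=\nabla_{\Sph^\dim}d_\alpha(\beta)$), and your three-term split after adding and subtracting $\dot\gamma_t(\phi)\cdot X_t(\beta)$ and $\dot\gamma_t(\phi)\cdot X_t(\alpha)$ reproduces the paper's terms $II-B$, $B-A$, $I+A$. One clarification on your hesitation about the factor $2$: your third piece already yields $|\dot\gamma_t(\phi)-\dot\gamma_t(0)|\,\|X\|_{L^\infty}=2\sin(\phi/2)\,\|X\|_{L^\infty}\le\phi\,\|X\|_{L^\infty}$, so no additional ``symmetric treatment'' is needed---the factor $2$ in the stated constant appears only because the paper uses the cruder estimate $\sin(\phi/2)\le\phi$ rather than $\sin(\phi/2)\le\phi/2$.
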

\begin{proof}

Fix $p\in\Sigma$ and $t\in[0,\tau)$. We have to estimate the distance $d(\PsiX(p),\PsiY(p))$ when $\PsiX(p)\neq \PsiY(p)$ (otherwise the result is trivial). Compute
\begin{align} \label{eqn:est0}
	\frac{\der}{\der t} d(\PsiX(p),\PsiY(p)) & = \nabla_{\Sph^\dim}\, d_{\PsiY(p)}(\PsiX(p)) \cdot X_t(\PsiX(p)) + \nabla_{\Sph^\dim}\, d_{\PsiX(p)}(\PsiY(p)) \cdot Y_t(\PsiY(p)) \nonumber \\
	& := I + II.
\end{align}
Add and subtract 
\[
A: = \nabla_{\Sph^{\dim}}\, d_{\PsiX(p)}(\PsiY(p)) \cdot X_t(\PsiX(p))
\]
and
\[
B:=  \nabla_{\Sph^{\dim}}\, d_{\PsiX(p)}(\PsiY(p)) \cdot X_t(\PsiY(p))
\] 
to the right-hand side of \eqref{eqn:est0}, which now reads:
\[
I+ A -A + B - B + II. 
\]
The terms $II - B$ estimate as:
\begin{align}
II - B & =  \nabla_{\Sph^\dim}\, d_{\PsiX(p)}(\PsiY(p)) \cdot (Y_t(\PsiY(p)) - X_t(\PsiY(p))) \nonumber \\[5pt]
& \leq \| X-Y\|_{L^\infty([0,\tau)\times \S)},
\label{eqn:est1}
\end{align}
where we used the Cauchy--Schwarz inequality and the fact that the gradient of the distance has norm equal to $1$. By similar considerations, also estimate:
\begin{align}
B - A &=   \nabla_{\Sph^\dim}\, d_{\PsiX(p)}(\PsiY(p)) \cdot (X_t(\PsiY(p)) - X_t(\PsiX(p))) \nonumber \\[5pt]
& \leq |X_t(\PsiY(p)) - X_t(\PsiX(p))|  \nonumber \\[5pt]
& \leq L_X \, d(\PsiX(p),\PsiY(p)),
 \label{eqn:est2}
\end{align}
where for the last inequality we used  the Lipschitz condition \eqref{eq:Lipschitz-X}.

Finally, writing $\theta = d(\PsiX(p),\PsiY(p))$ and using \eqref{eqn:gradd-sphere},
\begin{align}
I + A &=  \left( \nabla_{\Sph^\dim}\, d_{\PsiY(p)}(\PsiX(p)) + \nabla_{\Sph^\dim}\, d_{\PsiX(p)}(\PsiY(p)) \right) \cdot  X_t(\PsiX(p)) \nonumber \\[5pt]
& = \frac{1}{\sin \theta} \left( \cos \theta \, \PsiX(p) - \PsiY(p) + \cos \theta \, \PsiY(p) - \PsiX(p) \right) \cdot X_t(\PsiX(p)).
\label{eqn:t3}
\end{align}
To estimate \eqref{eqn:t3}, write $x = \PsiX(p)$ and $y = \PsiY(p)$; note that $\theta$ is the angle between $Ox$ and $Oy$, where $O$ is the centre of the sphere; see Figure \ref{fig:figure} for an illustration. The vector $\cos \theta \, x - y$ at $x$ is tangent to the great circle containing $x$ and $y$, pointing ``away" from $y$. Analogous comment for the vector $\cos \theta \, y - x$ at $y$. Consider two orthogonal directions in the plane $Oxy$: one along the bisector of the angle $xOy$ and the other perpendicular to it; these directions are indicated by dotted lines in Figure \ref{fig:figure}. Also note that $\theta <\pi$. By symmetry, $\cos \theta \, x - y$ and $\cos \theta \, y - x$ have the same component in the bisector direction, and opposite components in the orthogonal direction. The latter two cancel each other when the two vectors are added. The components in the bisector direction combine. 

\begin{figure}[htb]
 \begin{center}
  \includegraphics[width=0.35\textwidth]{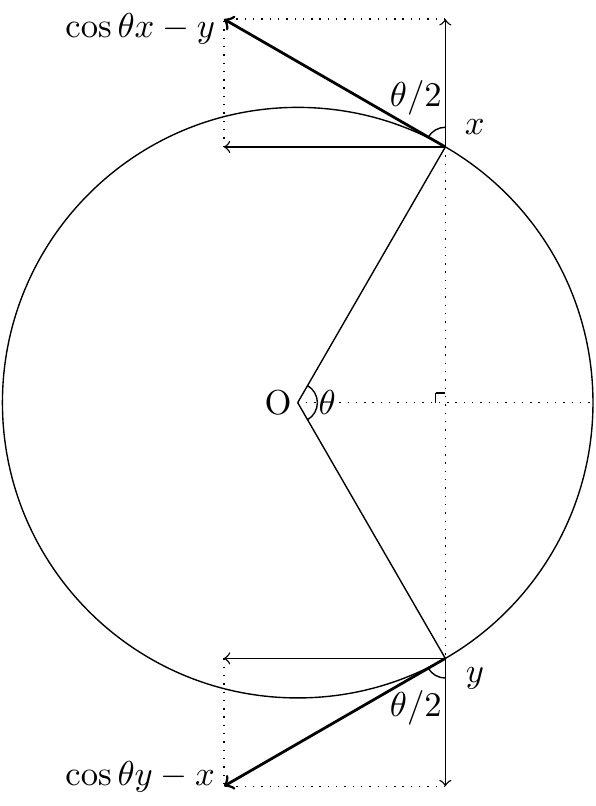} 
\caption{A great circle on the unit sphere containing points $x$ and $y$. The vectors $\cos \theta \, x - y$ at $x$ and 
$\cos \theta \, y - x$  at $y$ have the same component in the direction of the bisector of the angle $xOy$, and opposite components in the direction orthogonal to the bisector.}
\label{fig:figure}
\end{center}
\end{figure}

By the geometry of the problem, $\cos \theta \, x - y$ makes an angle of $\frac{\pi}{2}-\frac{\theta}{2}$ with the bisector direction. Given that $|\cos \theta \, x - y| = 
|\cos \theta \, y - x| = \sin \theta$, and the considerations above,
\begin{equation}
|\cos \theta \, x - y + \cos \theta \, y - x| =
2 \sin \theta \cos \left(\frac{\pi}{2} - \frac{\theta}{2} \right) = 2 \sin \theta \sin \left( \frac{\theta}{2}\right).
\label{eqn:t3-int}
\end{equation}
Now we return to \eqref{eqn:t3} and estimate using the Cauchy--Schwarz inequality and \eqref{eqn:t3-int}: 
\begin{align}
\frac{1}{\sin \theta} \left( \cos \theta \, \PsiX(p) - \PsiY(p) + \cos \theta \, \PsiY(p) - \PsiX(p) \right) \cdot X_t(\PsiX(p)) & \leq 2 \|X\|_{L^\infty(\S\times[0,\tau)} \sin \left( \frac{\theta}{2}\right) \nonumber \\
& \leq 2 \|X\|_{L^\infty(\S\times[0,\tau))} \theta,
\label{eqn:est3}
\end{align}
where we used for the second inequality that $\sin \left( \frac{\theta}{2}\right) \leq \theta$.

Collecting \eqref{eqn:est1}, \eqref{eqn:est2} and \eqref{eqn:est3} we find from \eqref{eqn:est0}:
\begin{equation}
\label{eqn:est-total}
	\frac{\der}{\der t} d(\PsiX(p),\PsiY(p)) \leq (L_X + 2 \|X\|_{L^\infty(\S\times[0,\tau))}) \, d(\PsiX(p),\PsiY(p)) +  \| X-Y\|_{L^\infty(\S\times[0,\tau))},
\end{equation}
and Gronwall's lemma gives the desired estimate.
\end{proof}

\begin{lem}\label{lem:Lipschitz-initial}
Let $X$ be a time-dependent vector field on $\S$. Let $\Sigma\subset \S$ and suppose that $\Psi_X$ is a flow map defined on $\Sigma\times [0,\tau)$, for some $\tau >0$, generated by $(X,\Sigma)$. Assume moreover that $X$ is bounded on $\S\times [0,\tau)$ and Lipschitz continuous with respect to its first variable on $\S\times [0,\tau)$ (i.e., it satisfies \eqref{eq:Lipschitz-X}) with Lipschitz constant $L_X>0$. Then, 
\bes
	d(\PsiX(p),\PsiX(q)) \leq e^{(L_X+2\|X\|_{L^\infty(\S\times[0,\tau))})t} d(p,q), \qquad \mbox{for all $p,q \in \Sigma$ and $t\in [0,\tau)$}.
\ees
\end{lem}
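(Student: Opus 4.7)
The plan is to mirror the strategy used for Lemma \ref{lem:dist-flow-maps}, but with a simpler add-and-subtract decomposition, since now both trajectories are driven by the same vector field $X$, so there is no $\|X-Y\|$ contribution. Fix $p,q \in \Sigma$ and set $\phi(t) := d(\PsiX(p),\PsiX(q))$. When $\phi(t)=0$ the inequality is trivial, so focus on times where $\phi(t)>0$; by geodesic convexity of $\S$ and the fact that the distance function is smooth off the diagonal (inside the injectivity radius) $\phi$ is classically differentiable there, and the resulting estimate will extend to all of $[0,\tau)$ by absolute continuity.

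Differentiating via the chain rule, and using \eqref{eq:characteristics-general} together with \eqref{eqn:gradd-sphere}, gives
\begin{equation*}
\frac{\der}{\der t} \phi(t) = \nabla_{\Sph^\dim}\, d_{\PsiX(q)}(\PsiX(p)) \cdot X_t(\PsiX(p)) + \nabla_{\Sph^\dim}\, d_{\PsiX(p)}(\PsiX(q)) \cdot X_t(\PsiX(q)).
\end{equation*}
I would then add and subtract the single cross term $A := \nabla_{\Sph^\dim}\, d_{\PsiX(p)}(\PsiX(q)) \cdot X_t(\PsiX(p))$, splitting the right-hand side into two pieces whose bounds are already essentially present in the proof of Lemma \ref{lem:dist-flow-maps}.

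The first piece is the symmetric term
\begin{equation*}
\bigl(\nabla_{\Sph^\dim}\, d_{\PsiX(q)}(\PsiX(p)) + \nabla_{\Sph^\dim}\, d_{\PsiX(p)}(\PsiX(q))\bigr) \cdot X_t(\PsiX(p)),
\end{equation*}
which is of exactly the same form as $I+A$ in \eqref{eqn:t3}, with $x=\PsiX(p)$ and $y=\PsiX(q)$. Applying the purely geometric identity \eqref{eqn:t3-int} on the unit sphere and the Cauchy--Schwarz inequality, this term is bounded above by $2\|X\|_{L^\infty(\S\times[0,\tau))} \sin(\phi(t)/2) \leq 2\|X\|_{L^\infty(\S\times[0,\tau))}\phi(t)$, using $\sin(\theta/2)\leq \theta$ as in \eqref{eqn:est3}. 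The second piece is the difference term
\begin{equation*}
\nabla_{\Sph^\dim}\, d_{\PsiX(p)}(\PsiX(q)) \cdot \bigl(X_t(\PsiX(q)) - X_t(\PsiX(p))\bigr),
\end{equation*}
which by Cauchy--Schwarz, the unit norm of the distance gradient, and the Lipschitz hypothesis \eqref{eq:Lipschitz-X} is bounded above by $L_X\, d(\PsiX(p),\PsiX(q)) = L_X\phi(t)$.

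Combining both bounds yields the differential inequality $\phi'(t) \leq (L_X + 2\|X\|_{L^\infty(\S\times[0,\tau))})\,\phi(t)$, and a direct application of Gronwall's lemma gives the claimed estimate with initial value $\phi(0)=d(p,q)$. The main obstacle is not analytic but a book-keeping issue: one must justify that the bound derived at points where $\phi(t)>0$ extends across the (closed) set where $\phi$ vanishes. This follows from the fact that $\phi$ is Lipschitz continuous in $t$ (by Lemma \ref{lem:preliminary}\ref{it:prel2} applied to the curves $t\mapsto \PsiX(p),\PsiX(q)$, and the triangle inequality) and hence absolutely continuous, so Gronwall applies on the whole interval $[0,\tau)$.
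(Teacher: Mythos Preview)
Your proof is correct and follows essentially the same approach as the paper: the same single add-and-subtract of $\nabla_{\Sph^\dim}\, d_{\PsiX(p)}(\PsiX(q)) \cdot X_t(\PsiX(p))$, the same two estimates (the geometric $I+A$-type bound and the Lipschitz bound), and Gronwall. If anything, you are slightly more careful than the paper in addressing the differentiability of $\phi$ at times where it vanishes.
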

\begin{proof}
Fix $t\in[0,\tau)$ and $p,q \in \Sigma$ and estimate the distance $d(\PsiX(p),\PsiX(q))$ by computing:
\begin{equation} \label{eqn:estpq0}
	\frac{\der}{\der t} d(\PsiX(p),\PsiX(q)) = \nabla_{\Sph^\dim}\, d_{\PsiX(q)}(\PsiX(p)) \cdot X_t(\PsiX(p)) + \nabla_{\Sph^\dim}\, d_{\PsiX(p)}(\PsiX(q)) \cdot X_t(\PsiX(q)). 
\end{equation}
Add and subtract $\nabla_{\Sph^\dim}\, d_{\PsiX(p)}(\PsiX(q)) \cdot X_t(\PsiX(p))$ to the right-hand side above. By considerations similar to those used in the proof of Lemma \ref{lem:dist-flow-maps} (see the estimates on the term $I+A$ leading to \eqref{eqn:est3}), one gets:
\[
	\left( \nabla_{\Sph^\dim}\, d_{\PsiX(q)}(\PsiX(p)) + \nabla_{\Sph^\dim}\, d_{\PsiX(p)}(\PsiX(q)) \right) \cdot X_t(\PsiX(p)) \leq 2 \|X\|_{L^\infty(\S\times[0,\tau))} d(\PsiX(p),\PsiX(q)).
\]
Also, by the Cauchy--Schwarz inequality and the Lipschitz condition on $X$, 
\[
\nabla_{\Sph^\dim}\, d_{\PsiX(p)}(\PsiX(q)) \cdot (X_t(\PsiX(q)) - X_t(\PsiX(p))) \leq L_X d(\PsiX(p),\PsiX(q)).
\]
Using the two estimates above in \eqref{eqn:estpq0} one then finds:
\[
\frac{\der}{\der t} d(\PsiX(p),\PsiX(q)) \leq ( L_X + 2 \|X\|_{L^\infty(\S\times[0,\tau))}) d(\PsiX(p),\PsiX(q)),
\]
which by Gronwall's lemma yields the desired result.
\end{proof}


\subsection{Well-posedness of solutions}
\label{subsect:well-posedness}

We first show that on sphere, the vector field \eqref{eqn:v-field} associated to equation \eqref{eqn:model} is bounded and satisfies the Lipschitz condition needed to apply the Lemmas in Section \ref{subsect:vectorsS}.

\begin{lem}\label{lem:dist-flow-maps-K}
 	Let $K$ satisfy \ref{hyp:K} and let $\rho \in \Cont([0,T);\P(\S))$. Then, the vector field $\V[\rho]$ given by \eqref{eqn:v-field} is bounded on $\S\times [0,T)$ and satisfies the Lipschitz condition \eqref{eq:Lipschitz-X}, that is, there exists $L(\e)>0$ such that
\bes
		|\V[\rho](x,t) - \V[\rho](y,t)| \leq L(\e) d(x,y), \qquad \mbox{for all $(x,y,t) \in \S \times \S\times [0,T)$}.
\ees
More specifically,
\bes
	\|\V[\rho]\|_{L^\infty(\S\times[0,T))} \leq 2 \pi C_{g'}(\e),
\ees
and the Lipschitz constant $L(\e)$ depends only on $C_f(\e),L_f(\e),C_{g'}(\e)$ and $L_{g'}(\e)$.
\end{lem}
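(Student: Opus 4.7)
The plan is to unpack $\V[\rho](x,t)$ using the formula \eqref{eqn:gradK-gen}, which gives
\[
\V[\rho](x,t) \;=\; 2\int_\S g'(d(x,y)^2)\,\log_x y \,\der\rho_t(y).
\]
The boundedness follows almost immediately: on $\S\subset\Sph^\dim$ the Riemannian logarithm has length $|\log_x y|=d(x,y)\leq \pi-2\e$ (this is the defining property of the exponential/logarithm maps), while $|g'(d(x,y)^2)|\leq C_{g'}(\e)$ by assumption \ref{hyp:K} and the choice of $\S$. Integrating against the probability measure $\rho_t$ then yields $\|\V[\rho]\|_{L^\infty(\S\times[0,T))}\leq 2(\pi-2\e)C_{g'}(\e)\leq 2\pi C_{g'}(\e)$.

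For the Lipschitz estimate I would use the standard add-and-subtract trick inside the integral, writing
\[
g'(d(x,z)^2)\log_x z - g'(d(y,z)^2)\log_y z = \bigl[g'(d(x,z)^2)-g'(d(y,z)^2)\bigr]\log_x z + g'(d(y,z)^2)\bigl[\log_x z-\log_y z\bigr].
\]
The first bracket is handled by the Lipschitz continuity of $g'$ on $[0,(\pi-2\e)^2]$ together with the factorisation $d(x,z)^2-d(y,z)^2=(d(x,z)+d(y,z))(d(x,z)-d(y,z))$ and the reverse triangle inequality $|d(x,z)-d(y,z)|\leq d(x,y)$; combined with $|\log_x z|\leq\pi-2\e$ this yields a term bounded by a constant (depending on $L_{g'}(\e)$ and $\e$) times $d(x,y)$.

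The main substance lies in Lipschitz control of the map $x\mapsto \log_x z$. Using the explicit expression \eqref{eqn:log-sphere} and setting $\phi(\theta):=f(\theta)\cos\theta$, I would rearrange
\[
\log_x z-\log_y z \;=\; \bigl[f(\theta_{xz})-f(\theta_{yz})\bigr]z \;-\; \phi(\theta_{xz})(x-y) \;-\; \bigl[\phi(\theta_{xz})-\phi(\theta_{yz})\bigr]y.
\]
Each piece is now controlled by quantities depending only on $C_f(\e),L_f(\e)$: the first by $L_f(\e)|d(x,z)-d(y,z)|\leq L_f(\e)\,d(x,y)$, the second by $C_f(\e)|x-y|\leq C_f(\e)\,d(x,y)$ (chord bounded by arc on $\Sph^\dim$), and the third by the Lipschitz constant of $\phi$ on $[0,\pi-2\e]$, which from $\phi'=f'\cos-f\sin$ is bounded by a function of $C_f(\e)$ and $L_f(\e)$. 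Multiplying this by the uniform bound $C_{g'}(\e)$ on $g'$ produces the second Lipschitz contribution, and the two contributions together give $L(\e)$ with the claimed dependence.

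The main obstacle is precisely the Lipschitz estimate on $x\mapsto\log_x z$: a priori $f(\theta)=\theta/\sin\theta$ and its derivative blow up as $\theta\to\pi$, so without confining the dynamics to $\S$ (so that $\theta_{xz},\theta_{yz}\leq\pi-2\e$) there would be no uniform Lipschitz constant. The role of $\e>0$ in the definition of $\S$, and of the finite quantities $C_f(\e),L_f(\e)$, is exactly to absorb this potential singularity; the rest of the argument is routine.
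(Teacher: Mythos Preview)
Your proposal is correct and follows essentially the same route as the paper: both prove boundedness via $|\log_x y|=d(x,y)$ and the bound on $g'$, and both prove the Lipschitz estimate by the same add-and-subtract on $g'(\cdot)\log_{\,\cdot\,}z$, then control $\log_x z-\log_y z$ through the explicit formula \eqref{eqn:log-sphere} using the bounds $C_f(\e),L_f(\e)$. The only cosmetic difference is that you package $f(\theta)\cos\theta$ as a single function $\phi$ and bound $\phi'$ directly, whereas the paper splits $f(\theta_{xz})\cos\theta_{xz}-f(\theta_{yz})\cos\theta_{yz}$ into two pieces by a further add-and-subtract; the resulting constants coincide.
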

\begin{proof}
The boundedness of $\V[\rho]$ is immediate. Indeed, for all $(x,t) \in \S\times [0,T)$,
\be\label{eqn:X-bound}
	|\V[\rho](x,t)| \leq \int_{\S} |\grad_{\Sph^\dim} K_y(x)| \d\rho_t(y) = \int_{\S} |g'(d(x,y)^2) \grad_{\Sph^\dim} d_y^2(x)| \d\rho_t(y) \leq 2 \pi C_{g'}(\e), 
\ee
where for the last inequality we used the relation $|\grad_{\Sph^\dim} d_y^2(x)| = 2d(x,y)$, the bound on $g'$ and \eqref{eqn:gradd2-sphere} (note that $d(x,y)<\pi$ for every $x,y \in \S$).

For the Lipschitz condition, let $x,y \in \S$. By \eqref{eqn:v-field} we have
\begin{equation}
\label{eqn:X-diff}
\V[\rho](x,t) - \V[\rho](y,t) = \int_{\S} (\grad_{\Sph^\dim} K_z(x) - \grad_{\Sph^\dim} K_z(y)) \d \rho_t(z).
\end{equation}
As noted before, taking the difference of tangent vectors at different points $x$ and $y$ makes sense here since, in the case of the sphere, tangent vectors can be regarded as vectors in the embedding space $\R^{\dim+1}$. Compute, for all $z\in\S$, using \eqref{eqn:gradd2-sphere}:
\[
	\grad_{\Sph^\dim} d_z^2(x) - \grad_{\Sph^\dim} d_z^2(y) = 2 f(\theta_{xz}) \left( \cos (\theta_{xz}) \, x - z \right) -  2 f(\theta_{yz}) \left( \cos (\theta_{yz}) \, y - z \right),
\]
where $\theta_{xz} := d(x,z)$ and $\theta_{yz} := d(y,z)$.

Add and subtract $2 f(\theta_{xz}) \cos(\theta_{xz}) y + 2 f(\theta_{yz}) \cos(\theta_{xz})y$ in the right-hand side above to get:
\begin{align*}
	& \grad_{\Sph^\dim} d_z^2(x) - \grad_{\Sph^\dim} d_z^2(y)\\ 
	&\qquad = 2 f(\theta_{xz}) \cos(\theta_{xz})(x-y) + 2 (f(\theta_{xz}) - f(\theta_{yz}))\cos(\theta_{xz})y + 2 f(\theta_{yz})(\cos(\theta_{xz}) - \cos(\theta_{yz}))y\\
	&\qquad  -2 \left( f(\theta_{xz}) - f(\theta_{yz}) \right) z.
\end{align*}
This yields, for all $z\in\S$,
\begin{align}
	& |\grad_{\Sph^\dim} d_z^2(x) - \grad_{\Sph^\dim} d_z^2(y)| \nonumber  \\
	&\qquad \leq 2 C_f(\e) |x-y| + 2 L_f(\e)|\theta_{xz}-\theta_{yz}| + 2 C_f(\e)|\theta_{xz}-\theta_{yz}| + 2 L_f(\e) |\theta_{xz}-\theta_{yz}| \nonumber  \\
	&\qquad \leq 4(C_f(\e) + L_f(\e)) d(x,y), \label{eqn:est-d2}
\end{align}
where in the last inequality we used the triangle inequality $|\theta_{xz}-\theta_{yz}| = |d(x,z)-d(y,z)| \leq d(x,y)$, and that the Euclidean distance in $\R^{\dim+1}$ is less than or equal to the induced distance on $\Sph^\dim$. 

For an interaction potential in the form \eqref{eqn:K-gen}, one then gets, for all $z\in\S$:
\begin{align*}
	& |\grad_{\Sph^\dim} K_z(x) - \grad_{\Sph^\dim} K_z(y)| = |g'(d(x,z)^2) \grad_{\Sph^\dim} d_z^2(x) - g'(d(y,z)^2) \grad_{\Sph^\dim} d_z^2(y)|\\
	&\qquad \qquad \leq |g'(\theta_{xz}^2) - g'(\theta_{yz}^2)| |\grad_{\Sph^\dim}d_z^2(x)| + |g'(\theta_{yz}^2)| |\grad_{\Sph^\dim} d_z^2(x) - \grad_{\Sph^\dim} d_z^2(y)|\\
	&\qquad \qquad \leq 2 L_{g'}(\e) |\theta_{xz}+\theta_{yz}||\theta_{xz}-\theta_{yz}| \theta_{xz} + 4C_{g'}(\e)(C_f(\e) + L_f(\e)) d(x,y)\\
	&\qquad \qquad \leq (4 \pi^2 L_{g'}(\e)+ 4 C_{g'}(\e)(C_f(\e) + L_f(\e))) d(x,y).
\end{align*}
For the first inequality above we added and subtracted $g'(\theta_{yz}^2) \grad_{\Sph^\dim}d_z^2(x)$ on the first line and then used triangle inequality. For the second inequality we used \eqref{eqn:est-d2}, the bounds and Lipschitz constants of $g'$, and the fact that $|\grad_{\Sph^\dim}d_z^2(x)| = 2 \theta_{xz}$. Finally, for the last inequality we used $|\theta_{xz} - \theta_{yz}| \leq d(x,y)$ by triangle inequality, and that $\theta_{xz},\theta_{yz} < \pi$.

Set 
\[
L(\e) := 4 \pi^2 L_{g'}(\e)  + 4C_{g'}(\e)(C_f(\e) + L_f(\e)).
\] 
Then, for all $t\in [0,T)$, by \eqref{eqn:X-diff} and the estimate above we get:
\bes
\label{eq:Lip-flow}
	|\V[\rho](x,t) - \V[\rho](t,y)| \leq L(\e) d(x,y) \int_{\S} \d\rho_t(z) = L(\e) d(x,y),
\ees
where we also used that $\rho_t$ is a probability measure on $\S$.
\end{proof}

\begin{rem}
In Lemma \ref{lem:dist-flow-maps-K}, the upper bound on $\|\V[\rho]\|_{L^\infty(\S\times[0,T))}$ and the Lipschitz constant of $\V[\rho]$ do not depend on the curve $\rho$. This is important for subsequent considerations, in particular for the proof of Theorem \ref{thm:well-posedness}, the main result in this section. The lemma also ensures that the implicit condition \eqref{eq:bound-distrib} holds.
\end{rem}

The following lemma is a fundamental step towards the proof of well-posedness; see for instance \cite[Lemma~3.15]{CanizoCarrilloRosado2011}, and also \cite[Theorem~4.1]{CaChHa2014}.

\begin{lem}\label{lem:grad-lip-2}
	Let $K$ satisfy \ref{hyp:K} and let $\rho,\sigma \in \Cont([0,T);\P(\S))$. Then,
	\begin{equation}
		\|\V[\rho]-\V[\sigma]\|_{L^\infty([0,T)\times \S)} \leq \Lip(\e) \bd_1(\rho,\sigma),
	 \label{eqn:X-Yest}
	\end{equation}
	where $\Lip(\e)$ is a constant depending on $C_f(\e)$, $L_f(\e)$, $C_{g'}(\e)$, and $L_{g'}(\e)$.
\end{lem}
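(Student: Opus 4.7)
The plan is to reduce the estimate to a Lipschitz property of $y\mapsto \grad_{\Sph^\dim} K_y(x)$ in its second variable (uniformly in $x$), and then apply a standard coupling/Kantorovich--Rubinstein argument to pick up the factor $W_1(\rho_t,\sigma_t)$. More precisely, for fixed $x\in\S$ and $t\in[0,T)$, I write
\begin{equation*}
\V[\rho](x,t) - \V[\sigma](x,t) = -\int_\S \grad_{\Sph^\dim} K_y(x)\,\d(\rho_t-\sigma_t)(y),
\end{equation*}
and if I can show that, uniformly in $x$ and $t$, the $\R^{\dim+1}$-valued map $y\mapsto \grad_{\Sph^\dim} K_y(x)$ is Lipschitz (with respect to $d$) with some constant $\Lip(\e)$, then, by testing against any coupling $\pi\in\Pi(\rho_t,\sigma_t)$,
\begin{equation*}
|\V[\rho](x,t)-\V[\sigma](x,t)|\leq \int_{\S\times\S}|\grad_{\Sph^\dim} K_y(x)-\grad_{\Sph^\dim} K_z(x)|\d\pi(y,z)\leq \Lip(\e)\int_{\S\times\S} d(y,z)\d\pi(y,z),
\end{equation*}
and taking the infimum over $\pi$ yields $\Lip(\e)W_1(\rho_t,\sigma_t)\leq \Lip(\e)\bd_1(\rho,\sigma)$. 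Taking the supremum over $x\in\S$ and $t\in[0,T)$ gives \eqref{eqn:X-Yest}.

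The only real work is thus the Lipschitz estimate of $y\mapsto \grad_{\Sph^\dim}K_y(x)=-2g'(\theta_{xy}^2)\log_x y$ in the second variable. Here I expect the computation to parallel the one in Lemma \ref{lem:dist-flow-maps-K} almost verbatim, because the symmetry $K(x,y)=g(d(x,y)^2)$ makes the dependence on $y$ structurally the same as the dependence on $x$. Writing
\begin{equation*}
\grad_{\Sph^\dim}K_y(x)-\grad_{\Sph^\dim}K_z(x) = -2[g'(\theta_{xy}^2)-g'(\theta_{xz}^2)]\log_x y - 2g'(\theta_{xz}^2)[\log_x y-\log_x z],
\end{equation*}
I would bound the first term using $|\log_x y|=\theta_{xy}\leq \pi$, the Lipschitz constant $L_{g'}(\e)$ of $g'$, and $|\theta_{xy}^2-\theta_{xz}^2|\leq 2\pi|\theta_{xy}-\theta_{xz}|\leq 2\pi d(y,z)$. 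For the second term I use \eqref{eqn:log-sphere} to write $\log_x y-\log_x z=f(\theta_{xy})(y-z)+[f(\theta_{xy})-f(\theta_{xz})]z-f(\theta_{xy})(\cos\theta_{xy}-\cos\theta_{xz})x-[f(\theta_{xy})-f(\theta_{xz})]\cos\theta_{xz}\,x$, and estimate each piece using $C_f(\e)$, $L_f(\e)$, the triangle inequality $|\theta_{xy}-\theta_{xz}|\leq d(y,z)$, and $|y-z|\leq d(y,z)$. Collecting terms produces a Lipschitz constant of the same form as in Lemma \ref{lem:dist-flow-maps-K}, depending only on $C_f(\e),L_f(\e),C_{g'}(\e),L_{g'}(\e)$.

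The main obstacle, if any, is purely bookkeeping: ensuring that the constant obtained is genuinely independent of $x$ and $t$, and that the passage from $W_1(\rho_t,\sigma_t)$ to $\bd_1(\rho,\sigma)$ after taking suprema is clean. The integration against $\rho_t$ and $\sigma_t$ introduces no new difficulty since the Lipschitz bound on $y\mapsto \grad_{\Sph^\dim}K_y(x)$ is independent of the measures. No regularity issues arise because $\S$ is geodesically convex and bounded away from antipodal configurations, so the formulas \eqref{eqn:log-sphere}--\eqref{eqn:gradd2-sphere} hold throughout and $f,f',g',g''$ are all controlled via the constants already introduced.
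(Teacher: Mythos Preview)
Your proposal is correct and follows essentially the same approach as the paper: first establish the uniform Lipschitz bound $|\grad_{\Sph^\dim}K_y(x)-\grad_{\Sph^\dim}K_z(x)|\leq \Lip(\e)\,d(y,z)$ via the explicit formula for $\log_x y$ (the paper's add--subtract decomposition of $\grad_{\Sph^\dim}d_y^2(x)-\grad_{\Sph^\dim}d_z^2(x)$ is algebraically equivalent to yours), then integrate against an optimal coupling $\pi_t\in\Pi(\rho_t,\sigma_t)$ to obtain $\Lip(\e)W_1(\rho_t,\sigma_t)\leq \Lip(\e)\bd_1(\rho,\sigma)$. One cosmetic point: your closing remark about ``$g''$'' being controlled is superfluous---only the Lipschitz constant $L_{g'}(\e)$ of $g'$ enters, as you correctly use earlier.
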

\begin{proof}
	Let us first show that there exists $\Lip(\e)>0$ such that
	\be\label{eq:K-Lip}
		|\grad_{\Sph^\dim} K_y(x) - \grad_{\Sph^\dim} K_z(x)| \leq \Lip(\e) d(y,z), \qquad \mbox{for all $x,y,z \in \S$}.
	\ee
	Let $x,y,z \in \S$ and write $\theta_{xy} := d(x,y)$ and $\theta_{xz} := d(x,z)$. Then, by \eqref{eqn:gradd2-sphere},
	\begin{align}
		& |\grad_{\Sph^\dim} d^2_y(x) - \grad_{\Sph^\dim} d^2_z(x)| =2  |f(\theta_{xy}) (\cos(\theta_{xy})x - y) - f(\theta_{xz})(\cos(\theta_{xz})x - z)| \nonumber\\
		&\qquad \leq 2 |(f(\theta_{xy})-f(\theta_{xz}))(\cos(\theta_{xy})x - y)| + 2 |f(\theta_{xz})((\cos(\theta_{xy}) - \cos(\theta_{xz}))x| + 2 |f(\theta_{xz})(z-y)|\nonumber \\
		&\qquad \leq 4 L_f(\e) |\theta_{xy} - \theta_{xz}| + 2 C_f(\e) |\theta_{xy} - \theta_{xz}| + 2 C_f(\e) |z-y| \nonumber \\
		& \qquad \leq 4 (L_f(\e) + C_f(\e))d(y,z),
        \label{eqn:est-d2-mod}
	\end{align}
	where we added and subtracted $f(\theta_{xz})(\cos(\theta_{xy})x-y)$ on the first line and then used triangle inequality, we used the bound and Lipschitz constant of $f$ for the second inequality sign, and finally, we used $|\theta_{xy} - \theta_{xz}| \leq d(y,z)$ by triangle inequality, and the fact that the Euclidean distance $|z-y|$ is smaller than the spherical distance $d(y,z)$. 
	
Now compute
	\begin{align}
		& |\grad_{\Sph^\dim} K_y(x) - \grad_{\Sph^\dim} K_z(x)| = |g'(d(x,y)^2) \grad_{\Sph^\dim} d^2_y(x) - g'(d(x,z)^2) \grad_{\Sph^\dim} d^2_z(x)| \nonumber \\
		&\qquad \leq |g'(\theta_{xy}^2) - g'(\theta_{xz}^2)| |\grad_{\Sph^\dim} d^2_y(x)| + |g'(\theta_{xz}^2)| |\grad_{\Sph^\dim} d^2_y(x) - \grad_{\Sph^\dim} d^2_z(x)| \nonumber \\
		&\qquad \leq 2L_{g'}(\e) |\theta_{xy}+\theta_{xz}| |\theta_{xy}-\theta_{xz}| \theta_{xy}+ 4 C_{g'}(\e) (L_f(\e) + C_f(\e)) d(y,z) \nonumber \\
		&\qquad \leq (4\pi^2 L_{g'}(\e)  + 4 C_{g'}(\e) (L_f(\e) + C_f(\e)))d(y,z).
		\label{eqn:est-K-mod}	
	\end{align}
In the above, we first added and subtracted $g'(\theta_{xz}^2) \grad_{\Sph^\dim} d^2_y(x)$ on the first line and used triangle inequality. For the second inequality we used \eqref{eqn:est-d2-mod}, the bound and Lipschitz constant of $g'$, and $|\grad_{\Sph^\dim} d^2_y(x)|= 2 \theta_{xy}$. For the last inequality we used $|\theta_{xy} - \theta_{xz}| \leq d(y,z)$ by triangle inequality, and that $\theta_{xy},\theta_{xz} <\pi$.

By setting 
\[
\Lip(\e):= 4 \pi^2 L_{g'}(\e)  + 4 C_{g'}(\e) (L_f(\e) + C_f(\e)),
\]
we get \eqref{eq:K-Lip}. Then, for $(x,t) \in \S \times [0,T)$ arbitrary fixed, take $\pi_t \in \Pi(\rho_t,\sigma_t)$ to be an optimal transport plan between $\rho_t$ and $\sigma_t$, and estimate:
	\begin{align*}
		|\V[\rho](x,t) - \V[\sigma](x,t)| &= \left| \int_{\S} \grad_{\Sph^\dim}K_y(x)\d\rho_t(y) - \int_{\S} \grad_{\Sph^\dim}K_z(x) \d\sigma_t(z) \right|\\
		&= \left| \int_{\S \times \S} \grad_{\Sph^\dim} K_y(x) \d\pi_t(y,z) - \int_{\S \times \S} \grad_{\Sph^\dim} K_z(x) \d\pi_t(y,z) \right|\\
		&\leq \int_{\S \times \S} |\grad_{\Sph^\dim} K_y(x) - \grad_{\Sph^\dim} K_z(x)| \d\pi_t(y,z).
	\end{align*}
Hence, using \eqref{eqn:est-K-mod},
	\begin{align}\label{est:LipX}
		|\V[\rho](x,t) - \V[\sigma](x,t)| &\leq \Lip(\e) \int_{\S \times \S} d(y,z) \d\pi_t(y,z)
= \Lip(\e)W_1(\rho_t,\sigma_t)\\
	&\leq \Lip(\e) \bd_1(\rho,\sigma) \nonumber.
	\end{align}
	Taking now the supremum in $(x,t)\in \S \times [0,T)$ on the left-hand side above gives the result.
\end{proof}

The main result of this section is given by the following theorem. The proof is based on a fixed point argument, borrowing from the layout and the general technique used by Canizo {\em et al.} \cite{CanizoCarrilloRosado2011} to prove the well-posedness of solutions in the Euclidean case. We point out that we work here with probability measures on $\S$, which is a geodesically convex set that can be covered by a single chart. It is expected in this case that the gradient flow techniques used in \cite{WuSlepcev2015} can be extended to deal with intrinsic interactions as in our setup. Nevertheless, this extension has not been worked out explicitly in the literature.

\begin{thm}[Well-posedness on open hemisphere]\label{thm:well-posedness}
	Suppose that $K$ satisfies \ref{hyp:K} and let $\rho_0 \in \P(\S)$. Then, there exist $T>0$ and a unique weak solution among curves in $\Cont([0,T);\P(\S))$ to the aggregation model \eqref{eqn:model} starting from $\rho_0$.
	\end{thm}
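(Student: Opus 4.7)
The plan is to set up a contraction mapping argument \`a la Canizo--Carrillo--Rosado on the complete metric space $X_T := (\Cont([0,T]; \P(\S)), \bd_1)$; completeness follows from that of $(\P(\S), W_1)$, itself a consequence of the compactness of $\overline{\S} \subset \Sph^\dim$. Define the solution operator $\Gamma \: X_T \to X_T$ by
$$\Gamma[\sigma]_t = \Psi^t_{\V[\sigma]} \# \rho_0.$$
By the very notion of weak solution, fixed points of $\Gamma$ are precisely the weak solutions of \eqref{eqn:model} starting from $\rho_0$, so the theorem reduces to showing that $\Gamma$ admits a unique fixed point in $X_T$ for some $T>0$. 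Existence and uniqueness will come from the Banach contraction principle.

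To verify that $\Gamma$ maps $X_T$ into itself, fix $\sigma \in X_T$: by Lemma \ref{lem:dist-flow-maps-K}, $\V[\sigma]$ is bounded by $2\pi C_{g'}(\e)$ and satisfies the Lipschitz condition \eqref{eq:Lipschitz-X} with constant $L(\e)$, both uniformly in $\sigma$. The Cauchy--Lipschitz theory on manifolds (Theorem \ref{thm:Cauchy-Lip}, applied in local charts using compactness of $\supp(\rho_0)$) then yields a unique local flow map $\Psi_{\V[\sigma]}$ starting from $\supp(\rho_0)$; the uniform velocity bound lets one choose $T>0$ small enough that the flow stays in $\S$ on $[0,T]$, and Lipschitz continuity of $t \mapsto \Psi^t_{\V[\sigma]}\#\rho_0$ in $W_1$ follows from Lemma \ref{lem:preliminary}\ref{it:prel2}, so $\Gamma[\sigma] \in X_T$. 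For the contraction estimate, take $\sigma, \tau \in X_T$: Lemma \ref{lem:preliminary}\ref{it:prel1} bounds $W_1(\Gamma[\sigma]_t, \Gamma[\tau]_t)$ by $\sup_{p \in \supp(\rho_0)} d(\Psi^t_{\V[\sigma]}(p), \Psi^t_{\V[\tau]}(p))$, and then Lemma \ref{lem:dist-flow-maps} combined with Lemma \ref{lem:grad-lip-2} gives
$$\bd_1(\Gamma[\sigma], \Gamma[\tau]) \leq \frac{e^{CT} - 1}{C}\, \Lip(\e)\, \bd_1(\sigma, \tau),$$
where $C := L(\e) + 4\pi C_{g'}(\e)$. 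The prefactor vanishes as $T \to 0^+$, so for $T$ sufficiently small $\Gamma$ is a strict contraction and the fixed point $\rho \in X_T$ exists and is unique.

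The main obstacle is geometric rather than analytic: I must ensure that the pushed-forward measure stays supported in the open set $\S$, since the explicit formulas \eqref{eqn:log-sphere}--\eqref{eqn:gradd2-sphere} and all the Lipschitz and boundedness estimates for $\V$ degenerate as the flow approaches the cut locus, i.e., as points approach $\p\S$. When $\supp(\rho_0)$ is compactly contained in $\S$ this is straightforward from the uniform $L^\infty$ bound on $\V[\sigma]$ and a choice of $T$ smaller than $\dist(\supp(\rho_0), \p\S)/(2\pi C_{g'}(\e))$; in the general case one can either approximate $\rho_0$ by compactly supported data and pass to the limit using the contraction estimate, or work on a marginally enlarged geodesically convex subset $\S_{\e'}$ with $\e' < \e$ to absorb the short-time displacement and then check that the mass actually remains in $\S$. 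Beyond this geometric subtlety the argument is mechanical, since the hard analytic work is already concentrated in Lemmas \ref{lem:dist-flow-maps}, \ref{lem:dist-flow-maps-K} and \ref{lem:grad-lip-2}, which supply the Gronwall-type and Wasserstein-Lipschitz estimates that drive the contraction.
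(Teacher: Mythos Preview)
Your proposal is correct and follows essentially the same route as the paper: a Banach fixed-point argument for the map $\Gamma(\sigma)_t = \Psi^t_{\V[\sigma]}\#\rho_0$ on $(\Cont([0,T);\P(\S)),\bd_1)$, with the contraction estimate assembled from Lemmas~\ref{lem:preliminary}\ref{it:prel1}, \ref{lem:dist-flow-maps}, \ref{lem:dist-flow-maps-K} and \ref{lem:grad-lip-2} exactly as you indicate, yielding the prefactor $C(\e,t)\Lip(\e)$ which is made $<1$ by shrinking $T$.

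The only substantive difference is in how the flow map is shown to exist and remain in $\S$. The paper does not argue via the explicit speed bound and $\dist(\supp(\rho_0),\partial\S)$; instead it invokes the chart-based Cauchy--Lipschitz theorem (Theorem~\ref{thm:Cauchy-Lip} in the Appendix, via Lemma~\ref{lem:interaction-complete}) to obtain a maximal time $\tau>0$ for the flow starting from $\supp(\rho_0)$, and uses Remark~\ref{rem:indep-max-time} to note that $\tau$ is independent of $\sigma$. Your concern about $\supp(\rho_0)$ possibly failing to be compact in the open set $\S$ is a genuine subtlety that the paper does not address explicitly (it simply applies Theorem~\ref{thm:Cauchy-Lip}, which requires a compact $\Sigma$); your suggested workarounds---approximation by compactly supported data, or enlarging to $\S_{\e'}$ with $\e'<\e$---are reasonable and would close this gap cleanly.
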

\begin{proof} We first invoke some results included in Appendix. Specifically, by Lemma \ref{lem:interaction-complete} the interaction velocity field $\V[\sigma]$ (with $\sigma$ fixed) is locally Lipschitz and hence it satisfies the assumptions of the local well-posedness result in Theorem \ref{thm:Cauchy-Lip}. In addition, by Remark \ref{rem:indep-max-time}, the maximal time of existence for its flow map does not depend on $\sigma$. Consequently, there exists a maximal time $\tau>0$ such that the map $\Gamma$, given by
	\begin{equation}
	\label{eqn:Gamma}
		\Gamma(\sigma)(t) = {\Psi_{\V[\sigma]}^t}\# \rho_0, \qquad \mbox{for all $\sigma \in \Cont([0,\tau);\P(\S))$ and $t \in [0,\tau)$},
	\end{equation}
	is well-defined, where $\Psi_{v[\sigma]}$ is the unique flow map generated by $(v[\sigma],\supp(\rho_0))$ and defined on $\supp(\rho_0)\times [0,\tau)$. We will prove that $\Gamma$ is a map from $\Cont([0,\tau);\P(\S))$ into itself and that it has a unique fixed point, which directly shows the desired result.
	
Let us show first that $\Gamma$ maps $\Cont([0,\tau);\P(\S))$ into itself. To this end, fix $\sigma \in \Cont([0,\tau);\P(\S))$. By the proof of Theorem \ref{thm:Cauchy-Lip} we know that $\Psi_{v[\sigma]}^t(x)\in \S$ for all $x\in\supp(\rho_0)$ and $t\in[0,\tau)$, so that $\Gamma(\sigma)(t)$ is supported in $\S$. We have in fact $\Gamma(\sigma)(t) \in\P(\S)$ for all $t\in[0,\tau)$ since $\rho_0\in \P(\S)$ and the push-forward conserves mass. Moreover, we get that the map $t \to \Gamma(\sigma)(t)$ is continuous due to Lemmas \ref{lem:dist-flow-maps-K} and \ref{lem:preliminary}\ref{it:prel2}. All in all we obtain $\Gamma \: (\Cont([0,\tau);\P(\S)),\bd_1) \to (\Cont([0,\tau);\P(\S)),\bd_1)$.


We now show that $\Gamma$ is a contraction if we restrict our final time to some $T\leq \tau$ to be determined. Let $\rho, \sigma \in \Cont([0,\tau);\P(\S))$. Then, for all $t \in [0,\tau)$,
	\begin{align}
		W_1({\Psi_{\V[\rho]}^t}\#\rho_0,{\Psi_{\V[\sigma]}^t}\#\rho_0) &\leq \sup_{x \in\supp(\rho_0)} d(\Psi_{\V[\rho]}^t(x),\Psi_{\V[\sigma]}^t(x)) \nonumber \\
		&\leq C(\e,t) \| \V[\rho] - \V[\sigma]\|_{L^\infty([0,\tau)\times \S)} \nonumber \\[5pt]
		&  \leq C(\e,t) \Lip(\e) \bd_1(\rho,\sigma),
		\label{eqn:estW1}
	\end{align}
	where for the first inequality we used Lemma \ref{lem:preliminary}\ref{it:prel1}, for the second inequality we used Lemmas \ref{lem:dist-flow-maps-K} and \ref{lem:dist-flow-maps} with
	\bes
		C(\e,t) =  \frac{e^{(L(\e) +  4\pi C_{g'}(\e))t}-1}{L(\e) + 4\pi C_{g'}(\e)},
	\ees
and for the last inequality we used Lemma \ref{lem:grad-lip-2}. Since $C(\e,t)$ is increasing in $t$, with $\lim_{t \to 0} C(\e,t) = 0$ and $\Lip(\e)$ is independent of time, we can choose $T\leq \tau$ small enough so that 
\[
C(\e,t) \Lip(\e) < C(\e,T) \Lip(\e) < \overline{C}(\e), \qquad \text{ for all } t \in [0,T),
\]
for some constant $\overline{C}(\e)<1$. Restricting $T$ accordingly, by taking the supremum over $[0,T)$ in \eqref{eqn:estW1} we find:
	\bes
		\bd_1(\Gamma(\rho),\Gamma(\sigma)) \leq \overline{C}(\e) \bd_1(\rho,\sigma),
	\ees
with $\overline{C}(\e)<1$. This shows that the restriction of $\Gamma$ to $(\Cont([0,T);\P(\S)),\bd_1)$ is a contraction.

We have thus shown that $\Gamma\: (\Cont([0,T);\P(\S)),\bd_1) \to (\Cont([0,T);\P(\S)),\bd_1)$ has a unique fixed point, that is, there exists a unique $\rho \in \Cont([0,T);\P(\S))$ such that
	\bes
		\rho_t = \Psi_{\V[\rho]}^t \# \rho_0 \quad \mbox{for all $[0,T)$},
	\ees
which means that $\rho$ is the desired solution.
\end{proof}

\begin{rem}
\label{rem:global} The solution established in Theorem \ref{thm:well-posedness} can be extended in time as long as its support remains within the set $\S$. In the particular case of purely attractive interactions ($g'\geq 0$), we show in Proposition \ref{prop:inv-cont} below that the well-posedness of solutions holds globally in time, i.e., $T=\infty$; in other words, $\S$ is an invariant set for the dynamics. Moreover, with further assumptions on the interaction potential, solutions approach asymptotically a consensus state; see Theorem \ref{thm:consensus-cont}.
\end{rem}

\subsection{Stability and particle solutions}
\label{subsect:stability}
In this section we investigate the stability of solutions to \eqref{eqn:model} with respect to the initial conditions and, based on it, we demonstrate the mean-field approximation. The following result is analogous to \cite[Theorem 3.16]{CanizoCarrilloRosado2011}.

\begin{thm}[Stability]\label{thm:stability}
	Consider an interaction potential $K$ that satisfies \ref{hyp:K}. Let $\rho_0,\sigma_0 \in \P(\S)$, and $\rho$ and $\sigma$ be weak solutions to \eqref{eqn:model} defined on $[0,T)$ starting from $\rho_0$ and $\sigma_0$, respectively. Then, there exist $T^*\in(0,T)$ and an increasing, bounded function $r(\e,\cdot)$ with $r(\e,0) = 1$ such that
\bes
	W_1(\rho_t,\sigma_t) \leq r(\e,t)W_1(\rho_0,\sigma_0),	\qquad \mbox{for all $t\in [0,T^*)$}.
\ees
\end{thm}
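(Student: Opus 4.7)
The plan is to compare the two solutions via their push-forward representations, following the classical template for stability of measure-valued solutions to continuity equations. Writing $\rho_t = \Psi^t_{\V[\rho]} \# \rho_0$ and $\sigma_t = \Psi^t_{\V[\sigma]} \# \sigma_0$, I would insert the intermediate measure $\Psi^t_{\V[\rho]} \# \sigma_0$ and apply the triangle inequality for $W_1$:
\[
W_1(\rho_t, \sigma_t) \leq W_1\bigl(\Psi^t_{\V[\rho]} \# \rho_0,\, \Psi^t_{\V[\rho]} \# \sigma_0\bigr) + W_1\bigl(\Psi^t_{\V[\rho]} \# \sigma_0,\, \Psi^t_{\V[\sigma]} \# \sigma_0\bigr).
\]
This cleanly separates the effect of different initial data from that of different velocity fields.

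For the first term I would invoke Lemma \ref{lem:preliminary}\ref{it:prel3}, whose Lipschitz constant for $\Psi^t_{\V[\rho]}$ is controlled via Lemma \ref{lem:Lipschitz-initial} combined with the bounds on $\V[\rho]$ from Lemma \ref{lem:dist-flow-maps-K}; this yields a factor $e^{(L(\e) + 4\pi C_{g'}(\e))t}$ in front of $W_1(\rho_0, \sigma_0)$. For the second term, the plan is to apply Lemma \ref{lem:preliminary}\ref{it:prel1} followed by Lemma \ref{lem:dist-flow-maps}, which bounds it by $C(\e, t)\, \|\V[\rho] - \V[\sigma]\|_{L^\infty(\S \times [0,t])}$ with the same
\[
C(\e, t) = \frac{e^{(L(\e) + 4\pi C_{g'}(\e))t} - 1}{L(\e) + 4\pi C_{g'}(\e)}
\]
that appeared in the proof of Theorem \ref{thm:well-posedness}, and then to convert this through Lemma \ref{lem:grad-lip-2} applied on $[0,t]$ into $C(\e, t)\, \Lip(\e)\, \sup_{s \in [0,t]} W_1(\rho_s, \sigma_s)$.

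The main obstacle I expect is that this conversion makes the estimate self-referential: $W_1(\rho_t, \sigma_t)$ ends up controlled by $\sup_{s \in [0, t]} W_1(\rho_s, \sigma_s)$, which is not of a pointwise Gr\"onwall form. The standard fix, and precisely what forces $T^*$ to be strictly less than $T$, is to set $h(t) := \sup_{s \in [0, t]} W_1(\rho_s, \sigma_s)$, take the supremum over $s \in [0, t]$ on both sides, and obtain
\[
h(t) \leq e^{(L(\e) + 4\pi C_{g'}(\e))t}\, W_1(\rho_0, \sigma_0) + C(\e, t)\, \Lip(\e)\, h(t).
\]
Since $C(\e, 0) = 0$ and $C(\e, \cdot)$ is continuous and increasing, one can pick $T^* \in (0, T)$ small enough that $C(\e, T^*)\, \Lip(\e) < 1$, absorb the coupling term on the left, and read off
\[
r(\e, t) = \frac{e^{(L(\e) + 4\pi C_{g'}(\e)) t}}{1 - C(\e, t)\, \Lip(\e)}, \qquad t \in [0, T^*).
\]
This $r(\e, \cdot)$ is increasing, bounded on $[0, T^*)$, and satisfies $r(\e, 0) = 1$, yielding exactly the claimed estimate. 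A finer Gr\"onwall-style argument could in principle extend the bound to the whole interval $[0, T)$, but the stated local form is already enough for the mean-field application that follows.
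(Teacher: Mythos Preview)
Your argument is correct and yields the stated conclusion, but the paper proceeds somewhat differently and obtains a cleaner $r(\e,\cdot)$. First, a technical point you glossed over: the flow map $\Psi^t_{\V[\rho]}$ coming from the weak-solution definition is only defined on $\supp(\rho_0)$, so to make sense of $\Psi^t_{\V[\rho]}\#\sigma_0$ one must first extend the flow to $\Sigma=\supp(\rho_0)\cup\supp(\sigma_0)$ via Theorem~\ref{thm:Cauchy-Lip}; in the paper this is what produces the extended maps $\tilde\Psi_{\V[\rho]}$, $\tilde\Psi_{\V[\sigma]}$ and the restriction $T^*=\min(\tau_\rho,\tau_\sigma,T)$, whereas in your version $T^*$ arises solely from the absorption condition $C(\e,T^*)\Lip(\e)<1$.

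The main methodological difference is that the paper does carry out the ``finer Gr\"onwall-style argument'' you allude to. Instead of invoking Lemma~\ref{lem:dist-flow-maps} in its final form (which produces $C(\e,t)\,\|\V[\rho]-\V[\sigma]\|_{L^\infty(\S\times[0,t])}$ and forces the sup-and-absorb trick), the paper returns to the differential inequality \eqref{eqn:est-total} and integrates it with an integrating factor to get the pointwise-in-time bound
\[
d(\tilde\Psi_{\V[\rho]}^t(x),\tilde\Psi_{\V[\sigma]}^t(x)) \leq \Lip(\e)\int_0^t e^{(L(\e)+2\|\V[\sigma]\|_{L^\infty})(t-s)}\,W_1(\rho_s,\sigma_s)\,\d s,
\]
using the time-sliced estimate \eqref{est:LipX} rather than the sup version of Lemma~\ref{lem:grad-lip-2}. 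Combined with the Lipschitz term as in your first step, this is a genuine integral Gr\"onwall inequality for $W_1(\rho_t,\sigma_t)$ and yields the explicit exponential
\[
r(\e,t)=e^{(\Lip(\e)+L(\e)+4\pi C_{g'}(\e))t},
\]
which is what is used downstream in the mean-field limit. Your $r$ works too, but blows up as $C(\e,t)\Lip(\e)\to 1$; the paper's version is bounded on any finite interval and makes transparent that $T^*$ is dictated only by the flow-map extension, not by the estimate itself.
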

\begin{proof}
	Let $\Sigma = \supp(\rho_0)\cup\supp(\sigma_0)$. By compactness of $\Sigma$, Theorem \ref{thm:Cauchy-Lip} and Lemma \ref{lem:interaction-complete}, we know the existence of unique maximal flow maps $\tilde \Psi_{v[\rho]}$ and $\tilde \Psi_{v[\sigma]}$ generated by $(v[\rho],\Sigma)$ and $(v[\sigma],\Sigma)$, respectively. Call $\tau_\rho>0$ and $\tau_\sigma>0$ the respective maximal times of existence, and set $T^* = \min(\tau_\rho,\tau_\sigma,T)$. Fix $t\in[0,T^*)$. Since $\rho$ and $\sigma$ are weak solutions up to time $T$, we have, by the triangle inequality,
	\begin{align}
	\label{est:stab1}
		W_1(\rho_t,\sigma_t) &= W_1(\tilde \Psi_{\V[\rho]}^t \# \rho_0,\tilde \Psi_{\V[\sigma]}^t \# \sigma_0)\nonumber \\[5pt]
		&\leq W_1(\tilde \Psi_{\V[\rho]}^t \# \rho_0,\tilde \Psi_{\V[\sigma]}^t \# \rho_0) + W_1(\tilde \Psi_{\V[\sigma]}^t \# \rho_0,\tilde \Psi_{\V[\sigma]}^t \# \sigma_0).
	\end{align}
By Lemma \ref{lem:dist-flow-maps-K}, the vector field $\V[\sigma]$ is bounded and Lipschitz continuous with respect to its first variable, which in turn implies by Lemma \ref{lem:Lipschitz-initial} that the map $\tilde \Psi_{\V[\sigma]}^t$ is Lipschitz continuous on $\S$ with Lipschitz constant $e^{(L(\e)+2\| \V[\sigma]\|_{L^\infty})t}$, where we write $\|\V[\sigma]\|_{L^\infty}$ for $\|\V[\sigma]\|_{L^\infty(\S\times[0,T))}$ for the rest of the proof. Using Lemma \ref{lem:preliminary} (parts \ref{it:prel1} and \ref{it:prel3}) for the first and second terms in the right-hand side of \eqref{est:stab1}, we further estimate:
\begin{align}
\label{est:stab2}
W_1(\tilde \Psi_{\V[\rho]}^t \# \rho_0, & \tilde \Psi_{\V[\sigma]}^t \# \rho_0)  + W_1(\tilde \Psi_{\V[\sigma]}^t \# \rho_0,\tilde \Psi_{\V[\sigma]}^t \# \sigma_0) \nonumber \\[5pt]
& \leq \sup_{x \in\supp(\rho_0)} d(\tilde \Psi_{\V[\rho]}^t(x),\tilde \Psi_{\V[\sigma]}^t(x)) + e^{(L(\e)+2\| \V[\sigma]\|_{L^\infty})t} W_1(\rho_0,\sigma_0).
\end{align}
Also, using estimate \eqref{eqn:est-total} for the vector fields $\V[\rho]$ and $\V[\sigma]$, and integrating it with an integrating factor, we find for all $x \in \supp(\rho_0)$,
\begin{align}
\label{est:stab3}
d(\tilde \Psi_{\V[\rho]}^t(x),\tilde \Psi_{\V[\sigma]}^t(x)) & \leq \int_0^t e^{(L(\e)+2\|\V[\sigma]\|_{L^\infty})(t-s)} \| \V[\rho](\cdot,s) - \V[\sigma](\cdot,s) \|_{L^\infty(\S)} \d s \nonumber \\
&\leq \Lip(\e) \int_0^t e^{(L(\e)+2\|\V[\sigma]\|_{L^\infty})(t-s)} W_1(\rho_s,\sigma_s) \d s,
\end{align}
where for the second inequality we used \eqref{est:LipX}.

Combine \eqref{est:stab1}, \eqref{est:stab2} and \eqref{est:stab3} to find, after multiplying by $e^{-(L(\e)+2\|\V[\sigma]\|_{L^\infty})t}$:
	\bes
		e^{-(L(\e)+2\|\V[\sigma]\|_{L^\infty})t} W_1(\rho_t,\sigma_t) \leq \Lip(\e) \int_0^t e^{- (L(\e)+2\|\V[\sigma]\|_{L^\infty}) s} W_1(\rho_s,\sigma_s) \d s + W_1(\rho_0,\sigma_0).
	\ees
By Gronwall's lemma the above estimate yields
	\bes
		e^{-(L(\e)+2\|\V[\sigma]\|_{L^\infty})t} \, W_1(\rho_t,\sigma_t) \leq e^{\Lip(\e)t} \, W_1(\rho_0,\sigma_0).
	\ees
Finally, use the upper bound for $\|\V[\sigma]\|_{L^\infty}$ established in Lemma \ref{lem:dist-flow-maps-K}, and set
\begin{equation}
\label{eqn:r}
r(\e,t):= e^{(\Lip(\e)+L(\e)+ 4 \pi C_{g'}(\e))t}
\end{equation}
to arrive at the desired conclusion.
\end{proof}

An important application of the stability result is the approximation of a continuum measure by empirical measures, referred to as mean-field approximation. We investigate this below.

\begin{lem}\label{lem:atomic-sol}
	Suppose that $K$ satisfies \ref{hyp:K}. Take $\N$ a positive integer and consider a collection of masses $(m_i)_{i=1}^{\N} \subset (0,1)$ such that $\sum_{i=1}^{\N} m_i = 1$, and points $(x_{i}^0)_{i=1}^{\N} \subset \S$. Then, there exists $T>0$ and a unique collection of trajectories $(x_i)_{i=1}^{\N}$ so that  $x_i\:[0,T) \to \S$ satisfies, for all $i\in \{1,\dots,n\}$ and $t\in[0,T)$,
	\be\label{eq:characteristics-particles}
		\begin{cases}
			x_i'(t) = \V[\rho^{\N}](x_i(t),t),\\
			x_i(0) = x_{i}^0,
		\end{cases}
	\ee
	where $\rho^{\N}\:[0,T) \to \P(\S)$ is the empirical measure associated to masses $m_i$ and trajectories $x_{i}$, for $i\in\{1,\dots,\N\}$, i.e.,
	\be\label{eq:atomic}
		\rho_t^{\N} = \sum_{i=1}^{\N} m_i \delta_{x_i(t)}, \qquad \mbox{for all $t\in [0,T)$}.
	\ee
	Furthermore, $\rho^{\N}$ is the unique weak solution to \eqref{eqn:model} on $[0,T)$ with initial data 
	\begin{equation}
	\label{eq:atomic-initial}
	\rho^{\N}_0 =  \sum_{i=1}^{\N} m_i \delta_{x_{i}^0}.
	\end{equation}
\end{lem}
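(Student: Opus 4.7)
The plan is to reduce the statement directly to Theorem \ref{thm:well-posedness}. I would apply that theorem with initial data $\rho_0^{\N}$ given by \eqref{eq:atomic-initial}, which lies in $\P(\S)$; this supplies some $T>0$ and a unique weak solution $\rho \in \Cont([0,T);\P(\S))$ starting from $\rho_0^\N$. By the definition of weak solution, $\rho_t = \Psi_{\V[\rho]}^t \# \rho_0^\N$ for all $t\in[0,T)$, where $\Psi_{\V[\rho]}$ is the unique flow map generated by $\V[\rho]$ on $\supp(\rho_0^\N) \times [0,T)$.

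The next step is to extract the particle trajectories from this flow map. Since $\supp(\rho_0^\N) \subseteq \{x_1^0,\dots,x_\N^0\}$, I would define $x_i(t) := \Psi_{\V[\rho]}^t(x_i^0)$ for each $i\in\{1,\dots,\N\}$ and $t\in[0,T)$. A direct computation of the push-forward against a test function yields
\[
\rho_t = \sum_{i=1}^{\N} m_i \delta_{x_i(t)} = \rho^\N_t,
\]
so the weak solution from Theorem \ref{thm:well-posedness} is precisely the empirical measure associated with these trajectories; in particular $\V[\rho] = \V[\rho^\N]$, and the flow map equation \eqref{eq:characteristics-general} evaluated at $x_i^0$ yields \eqref{eq:characteristics-particles}. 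This simultaneously produces the trajectories and identifies $\rho^\N$ as a weak solution with initial datum \eqref{eq:atomic-initial}; its uniqueness among curves in $\Cont([0,T);\P(\S))$ is inherited verbatim from Theorem \ref{thm:well-posedness}.

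For uniqueness of $(x_i)_{i=1}^\N$, I would suppose $(\tilde x_i)_{i=1}^\N$ is a second collection satisfying \eqref{eq:characteristics-particles} on $[0,T)$, set $\tilde\rho^\N_t := \sum_i m_i \delta_{\tilde x_i(t)}$, and observe that the assignment $\tilde\Psi^t \: x_i^0 \mapsto \tilde x_i(t)$ defines, by construction, a flow map on $\supp(\rho_0^\N)\times[0,T)$ generated by $\V[\tilde\rho^\N]$, while $\tilde\rho^\N = \tilde\Psi^t \# \rho_0^\N$. Thus $\tilde\rho^\N$ is a weak solution starting from $\rho_0^\N$, and Theorem \ref{thm:well-posedness} forces $\tilde\rho^\N = \rho^\N$; consequently $\V[\tilde\rho^\N] = \V[\rho^\N]$, and uniqueness of the flow map from Theorem \ref{thm:Cauchy-Lip} applied to this common velocity field gives $\tilde x_i = x_i$ for all $i$. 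The only mild subtlety I anticipate is allowing for possible repetitions among the $x_i^0$, which is harmless since the trajectories $x_i(t) = \Psi_{\V[\rho]}^t(x_i^0)$ simply inherit whatever identifications the initial points have; no substantive obstacle is expected, because Theorem \ref{thm:well-posedness} does the real work.
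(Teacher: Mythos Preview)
Your argument is correct and takes a route that is essentially the reverse of the paper's. The paper first establishes the particle ODE system directly: it rewrites \eqref{eq:characteristics-particles} as the autonomous system $x_i' = -\sum_j m_j \nabla_{\Sph^\dim} K_{x_j}(x_i)$ on $\S^{\,n}$, invokes Theorem \ref{thm:Cauchy-Lip} (via the local Lipschitz continuity on charts from Lemma \ref{lem:interaction-complete}) to obtain the trajectories, and only then verifies by a push-forward computation that the associated empirical measure $\rho^{\N}$ satisfies $\rho_t^{\N} = \Psi^t_{v[\rho^{\N}]} \# \rho_0^{\N}$, concluding uniqueness of $\rho^{\N}$ from Theorem \ref{thm:well-posedness}. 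You instead start from the measure side: apply Theorem \ref{thm:well-posedness} to $\rho_0^{\N}$, read off the trajectories from the resulting flow map, and identify the weak solution with the empirical measure after the fact. Your approach is more economical, since Theorem \ref{thm:well-posedness} absorbs both the construction of the flow and the fixed-point argument in one stroke; the paper's approach is slightly more self-contained in that it shows the particle system is well-posed via classical ODE theory independently of the measure-theoretic machinery, which is conceptually useful (e.g.\ for Proposition \ref{prop:inv-disc}). Your handling of the uniqueness of the trajectories and of possible repetitions among the $x_i^0$ is fine: once $\tilde\rho^{\N}$ is fixed, Lemma \ref{lem:interaction-complete} and Theorem \ref{thm:Cauchy-Lip} guarantee the flow of $v[\tilde\rho^{\N}]$ is unique, which both makes $\tilde\Psi^t$ well-defined on $\supp(\rho_0^{\N})$ and closes the argument.
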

\begin{proof} 
For all $i\in\{1,\dots,n\}$ we can rewrite the first equation in \eqref{eq:characteristics-particles} as
\bes
	x_i'(t) = - \sum_{j=1}^{\N} m_i\nabla_{\Sph^\dim}K_{x_j(t)}(x_i(t)).
\ees
The well-posedness of solutions to \eqref{eq:characteristics-particles} thus follows from Theorem \ref{thm:Cauchy-Lip} and the local Lipschitz continuity on charts of $x\mapsto \grad_{\Sph^\dim} K_z(x)$, uniformly in $z\in\S$, as discussed in the proof of Lemma \ref{lem:interaction-complete}.
Since $x_i$, $i=1,\dots,\N$, satisfies the first-order ODE system \eqref{eq:characteristics-particles}, $x_i$ is continuous. Let $\tf\in\Cont_\mt{b}(\S)$ and let $t\in[0,T)$ and $(t_k)_{k \geq 1} \subset [0,T)$ be such that $t_k\to t$ as $k\to\infty$. Then, using that $\tf$ is bounded,
	\bes
		\int_\S \tf(x) \d\rho_{t_k}^{\N}(x) = \sum_{i =1}^{\N} m_i \tf(x_i(t_k)) \to \sum_{i =1}^{\N} m_i \tf(x_i(t)) = \int_\S \tf(x) \d\rho_t^{\N}(x), 
	\ees
	which shows that $\rho^{\N} \in \Cont([0,T);\P(\S))$. Let $\Psi^t_{\V[\rho^{\N}]}$ be the unique flow map generated by $\V[\rho^{\N}]$ defined on $\supp(\rho_0^n)\times [0,T)$ and let $\zeta \: \S \to [-\infty,\infty]$ be measurable such that $\zeta \circ \Psi^t_{\V[\rho^{\N}]}$ is integrable with respect to $\rho_0^n$. Then $x_i(t) = \Psi^t_{\V[\rho^{\N}]}(x_i^0)$ for all $i\in\{1,\dots,n\}$ and $t\in[0,T)$, and we get
	\begin{align*}
		\int_\S \zeta(x) \d(\Psi^t_{\V[\rho^{\N}]} \#\rho_0^{\N})(x) &= \int_\S \zeta(\Psi^t_{\V[\rho^{\N}]}(x)) \d \rho_0^{\N}(x) 
		= \sum_{i=1}^{\N} m_i \zeta(\Psi^t_{\V[\rho^{\N}]}(x_i^0))\\
		&= \sum_{i=1}^{\N} m_i \zeta(x_i(t)) = \int_\S \zeta(x) \d\rho_t^{\N}(x),
	\end{align*}
	which proves that 
	\bes
		\rho_t^{\N} = \Psi^t_{\V[\rho^{\N}]} \#\rho_0^{\N}, \qquad \mbox{for all $t\in[0,T)$}.
	\ees
	Thus, $\rho^{\N}$ is a weak solution to \eqref{eqn:model} on $[0,T)$, with initial datum $\rho^{\N}_0$. The uniqueness of $\rho^{\N}$ follows directly from Theorem \ref{thm:well-posedness}.
\end{proof}

\begin{thm}[Mean-field limit]\label{thm:mfl}
	Suppose that $K$ satisfies \ref{hyp:K}. Let $\rho_0 \in \P(\S)$ and let $(\rho_0^{\N})_{\N \in\mathbb{N}} \subset\P(\S)$ be of the form \eqref{eq:atomic-initial} and such that
	\bes
		W_1(\rho_0^{\N},\rho_0) \to 0, \qquad \mbox{ as $\N\to\infty$}.
	\ees
	Suppose furthermore that $T>0$ is such that there exist a unique weak solution $\rho$ to \eqref{eqn:model} on $[0,T)$ starting from $\rho_0$ and a unique weak solution $\rho^{\N}$ to \eqref{eqn:model} on $[0,T)$ starting from $\rho_0^{\N}$ for all $\N\in\mathbb{N}$, which we know is of the form \eqref{eq:atomic}. Then,  there exists $T^*\in(0,T)$ such that
	\bes
		 \sup_{t\in[0,T^*)} W_1(\rho_t^{\N},\rho_t)  \to 0, \qquad \mbox{ as $\N\to\infty$}.
	\ees
\end{thm}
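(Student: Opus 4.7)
The plan is to apply the stability estimate, Theorem \ref{thm:stability}, to each pair $(\rho,\rho^\N)$ and exploit the fact that the function $r(\e,\cdot)$ appearing there, given explicitly by \eqref{eqn:r}, depends only on $\e$ and $t$ through the constants $\Lip(\e)$, $L(\e)$ and $C_{g'}(\e)$; in particular it is continuous, bounded on compact time intervals, and independent of the particular pair of solutions to which stability is applied. For each $\N$, Theorem \ref{thm:stability} yields a time $T^*_\N\in(0,T)$ with
\[
W_1(\rho_t,\rho_t^\N) \leq r(\e,t)\,W_1(\rho_0,\rho_0^\N), \qquad t\in[0,T^*_\N).
\]
If the $T^*_\N$ admit a common lower bound $T^*>0$, then taking the supremum over $[0,T^*)$ and sending $\N\to\infty$ delivers the conclusion, since $r(\e,T^*)<\infty$ and $W_1(\rho_0^\N,\rho_0)\to 0$ by hypothesis.

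The only nontrivial step is producing such a uniform $T^*$. Revisiting the proof of Theorem \ref{thm:stability}, $T^*_\N$ is the minimum of $T$ and the maximal times of existence of the flows $\tilde\Psi_{\V[\rho]}$ and $\tilde\Psi_{\V[\rho^\N]}$ generated from $\Sigma_\N := \supp(\rho_0)\cup\supp(\rho_0^\N)$. However, an inspection of the estimates leading to \eqref{est:stab3} reveals that the key pointwise quantity $d(\tilde\Psi^t_{\V[\rho]}(x),\tilde\Psi^t_{\V[\rho^\N]}(x))$ is only ever evaluated for $x\in\supp(\rho_0)$. Accordingly, what is actually required is that both $\tilde\Psi_{\V[\rho]}$ and $\tilde\Psi_{\V[\rho^\N]}$ be defined on $\supp(\rho_0)\times[0,T^*_\N)$: the former does so on all of $[0,T)$ by hypothesis, and the latter remains inside $\S$ for at least time $\delta_0/(2\pi C_{g'}(\e))$ thanks to the uniform velocity bound $\|\V[\rho^\N]\|_{L^\infty(\S\times[0,T))}\leq 2\pi C_{g'}(\e)$ from Lemma \ref{lem:dist-flow-maps-K}, where $\delta_0:=\dist(\supp(\rho_0),\partial\S)>0$ is independent of $\N$ since $\supp(\rho_0)$ is a fixed compact subset of the open set $\S$.

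Setting $T^*:=\tfrac12\min\bigl(T,\delta_0/(2\pi C_{g'}(\e))\bigr)$, the stability estimate then applies on $[0,T^*)$ uniformly in $\N$, and
\[
\sup_{t\in[0,T^*)} W_1(\rho_t^\N,\rho_t) \leq r(\e,T^*)\,W_1(\rho_0^\N,\rho_0) \longrightarrow 0
\]
as $\N\to\infty$. The delicate point I expect to be the main obstacle is exactly this uniformity argument: $W_1$-convergence of $\rho_0^\N$ to $\rho_0$ does not control $\supp(\rho_0^\N)$ in Hausdorff distance, so the uniform time cannot come from the geometry of $\Sigma_\N$ itself; instead, one must extract it from the observation above, namely that the stability estimate localises on the fixed set $\supp(\rho_0)$, on which the uniform bound on $\V[\rho^\N]$ is enough to keep the relevant trajectories inside $\S$ for a controlled time window.
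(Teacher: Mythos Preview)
Your proof is correct and follows the same approach as the paper: apply the stability estimate of Theorem \ref{thm:stability} to each pair $(\rho,\rho^\N)$, observe from \eqref{eqn:r} that the growth function $r(\e,\cdot)$ is independent of $\N$, and pass to the limit. The paper's proof is terser and simply invokes a common $T^*$ without justification; you go further and supply the missing uniformity argument for $T^*$, correctly noting that the only flow that must be \emph{extended} is $\tilde\Psi_{\V[\rho^\N]}$ onto the fixed compact set $\supp(\rho_0)$, where the uniform velocity bound $2\pi C_{g'}(\e)$ from Lemma \ref{lem:dist-flow-maps-K} gives an $\N$-independent escape time. One small point you leave implicit: the second term in \eqref{est:stab1} also requires $\tilde\Psi_{\V[\rho^\N]}$ on $\supp(\rho_0^\N)$ (to apply Lemma \ref{lem:preliminary}\ref{it:prel3} via Lemma \ref{lem:Lipschitz-initial} on $\Sigma_\N$), but this is already furnished up to time $T$ by the hypothesis that $\rho^\N$ is a weak solution on $[0,T)$, so no additional work is needed there.
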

\begin{proof}
	By Theorem \ref{thm:stability}, there exists a strictly increasing, bounded function $r_{\N}(\e,\cdot) \: [0,T^*) \to[0,\infty)$ for all $\N\in \mathbb{N}$ such that
	\bes
		W_1(\rho_t^{\N},\rho_t) \leq r_{\N}(\e,t) W_1(\rho_0^{\N},\rho_0), \qquad \mbox{for all $t\in[0,T^*)$}.
	\ees
	As one can see from the proof of Theorem \ref{thm:stability} (see equation \eqref{eqn:r}), the function $r_{\N}(\e,\cdot)$ is independent of $\N$, hence we rename it $r(\e,\cdot)$. By boundedness of $r(\e,\cdot)$ on $[0,T^*)$ (call $C_r(\e,T^*)>0$ such a bound), we get
	\bes
		\sup_{t\in[0,T^*)} W_1(\rho_t^{\N},\rho_t) \leq C_r(\e,T^*) W_1(\rho_0^{\N},\rho_0) \to 0, \qquad \mbox{ as $\N\to\infty$},
	\ees
	which finishes the proof.
\end{proof}


\section{Asymptotic behaviour on sphere}
\label{sect:synchronization}

In this section we study the asymptotic dynamics of model \eqref{eqn:model} on sphere $\Sph^\dim$ when the interaction potential is purely attractive. As in Section \ref{sect:sphere} we equip $\Sph^\dim$ with the metric induced by the canonical topology of the ambient Euclidean space $\R^{\dim+1}$. The aim is to investigate the formation of consensus (or synchronized) states asymptotically, i.e., when the solution $\rho_t$ approaches a Dirac mass on sphere as $t \to \infty$. 

Throughout this section the interaction potential $K$ is assumed to satisfy Assumption \ref{hyp:K} with $g$ nondecreasing (i.e., $g' \geq 0$), which corresponds to {\em purely attractive} interaction forces. 


\subsection{Global well-posedness and geodesic disks as invariant sets}
\label{subsect:geod-disks}
In consideration of the well-posedness result in Theorem \ref{thm:well-posedness}, a key aspect for investigating the asymptotic dynamics is whether the solution remains supported on the set $\S$, given in \eqref{eqn:setS} for some $\e\in(0,\pi/2)$, during the whole time evolution. In other words, we want to get global versions of Theorem \ref{thm:well-posedness} and Lemma \ref{lem:atomic-sol}. We study this below.

Fix $\e\in(0,\pi/2)$ throughout. Note that the set $\S$ is a geodesic disk of radius $\pi/2-\e$ centred at the North pole of the unit sphere. In general, the geodesic disk on sphere with centre at the North pole $\North$ and radius $r>0$ is given by:
\[
D_r=\{x\in\mathbb{S}^{\dim} \st d(x, \North)<r\}.
\]
Given that the geodesics on sphere lie on great circles, all disks $D_r$ with $0<r\leq \pi/2$, and their closures $\overline D_r$, are geodesically convex; in particular, so is $\S$. We observe that by spherical symmetry, the results we prove below are easily extended to any centre which is not the North pole. We start with the continuum model \eqref{eqn:model}. 

\begin{prop}[Global well-posedness in continuum model]
\label{prop:inv-cont}
Let $K$ satisfy \ref{hyp:K} with $g'\geq0$. Let $\rho_0 \in \P(\S)$ be such that $\supp(\rho_0)\subset \overline D_r$ for some $r<\pi/2-\e$. Then, there exists a unique global weak solution to the aggregation model \eqref{eqn:model} in $\Cont([0,\infty);\P(\S))$ that starts from $\rho_0$; moreover, $\supp(\rho_t) \subset \overline D_r$ for all $t\in[0,\infty)$.
\end{prop}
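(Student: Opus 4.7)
The plan is to establish the invariance of $\overline D_r$ under the dynamics, and then extend the local solution of Theorem \ref{thm:well-posedness} globally in time by iteration. The key ingredient is a geometric lemma on the sphere that encodes the purely attractive nature of the interaction: for $x,y\in\S$ with $d(y,\North)\leq d(x,\North)<\pi/2$,
\bes
\ap{\log_x \North,\log_x y}_x\geq 0.
\ees
Using \eqref{eqn:log-sphere} and the identity $u\cdot v=\cos d(u,v)$ on $\Sph^\dim$, this inner product equals $\frac{d(x,\North)\,d(x,y)}{\sin d(x,\North)\sin d(x,y)}\bigl(\cos d(\North,y)-\cos d(x,\North)\cos d(x,y)\bigr)$, which is nonnegative since $\cos d(\North,y)\geq \cos d(x,\North)\geq \cos d(x,\North)\cos d(x,y)\geq 0$. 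Combined with \eqref{eqn:v-field}, \eqref{eqn:gradK-gen} and $g'\geq 0$, this says that at any boundary point $x$ of a closed geodesic disk $\overline D_{r^*}$ (centred at $\North$, with $r^*<\pi/2$) containing $\supp(\sigma)$, the velocity $\V[\sigma](x,t)$ has nonpositive component along the outward normal $-\log_x \North/d(x,\North)$.

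First I would apply this at the particle level. Let $(x_i^0)_{i=1}^\N\subset \overline D_r$ and consider the trajectories $(x_i(t))_{i=1}^\N$ produced by Lemma \ref{lem:atomic-sol}. Setting $M(t)=\max_i d(x_i(t),\North)$ and $A(t)=\{i\st d(x_i(t),\North)=M(t)\}$, Danskin's formula for finite maxima of $\Cont^1$ functions yields
\bes
D^+M(t)\leq \max_{i\in A(t)}\nabla_{\Sph^\dim}d_\North(x_i(t))\cdot \V[\rho^\N](x_i(t),t),
\ees
and the geometric lemma applied to each pair $(x_i(t),x_j(t))$ with $i\in A(t)$ shows the right-hand side is nonpositive. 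Hence $M$ is nonincreasing, the trajectories stay in $\overline D_r$ for all time, and iterating Lemma \ref{lem:atomic-sol} produces a global particle solution $\rho^\N\in \Cont([0,\infty);\P(\overline D_r))$.

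To transfer the invariance to the continuum, I would approximate $\rho_0$ by empirical measures $\rho_0^\N$ supported in $\overline D_r$ with $W_1(\rho_0^\N,\rho_0)\to 0$ (standard, e.g.\ via a Voronoi partition of $\overline D_r$). Theorem \ref{thm:well-posedness} produces a local continuum solution $\rho$ on some $[0,T)$, with $T$ depending only on $\e$ (as its proof shows). Theorem \ref{thm:mfl} then yields $\sup_{t\in[0,T^*)}W_1(\rho_t^\N,\rho_t)\to 0$ for some $T^*\leq T$; since each $\rho_t^\N$ is supported in the closed set $\overline D_r$, narrow convergence forces $\supp(\rho_t)\subset \overline D_r$ on $[0,T^*)$. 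Because the local existence time in Theorem \ref{thm:well-posedness} has a uniform positive lower bound depending only on $\e$, I would restart at $\rho_{T^*/2}$, repeat the argument, and iterate over intervals of fixed length to cover $[0,\infty)$; uniqueness on each piece then gives global uniqueness by gluing.

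The main obstacle is the bootstrapping between invariance and existence: invariance is what permits extending the solution, yet existence is what makes the support available to be controlled in the first place. This circularity is resolved precisely by the fact that the local existence time in Theorem \ref{thm:well-posedness} is uniform in the initial datum (depending only on $\e$), which allows alternating invariance and existence in increments of fixed length.
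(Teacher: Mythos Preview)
Your argument is correct, and the key geometric inequality $\ap{\log_x \North,\log_x y}_x\geq 0$ is exactly the ingredient the paper uses too. However, your route to the result is genuinely different from the paper's.

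The paper feeds that inequality directly into a global Cauchy--Lipschitz theorem for flow maps (Theorem~\ref{thm:global-Cauchy-Lip} and Lemma~\ref{lem:interaction-complete-global} in the Appendix): for any \emph{fixed} curve $\sigma\in\Cont([0,\infty);\P(\overline D_r))$, the velocity field $\V[\sigma]$ satisfies \eqref{eq:attractive-general}, so its flow map is globally defined and takes values in $\overline D_r$. This allows the fixed-point map $\Gamma$ of Theorem~\ref{thm:well-posedness} to be set up from the outset on $\Cont([0,\infty);\P(\overline D_r))$; the contraction step then gives a uniform local time $T$, and iteration is immediate since $\Gamma$ already preserves the right space. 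No approximation is needed, and invariance is built into the domain of $\Gamma$ rather than established a posteriori.

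You instead prove invariance first for the self-consistent particle system via Danskin's formula on $M(t)=\max_i d(x_i(t),\North)$, and then transfer it to the continuum through the mean-field limit (Theorem~\ref{thm:mfl}). This is valid but more delicate: you must check that the $T^*$ appearing in the stability/mean-field theorems has a uniform positive lower bound over all approximating sequences (it does, since all $\supp(\rho_0^\N)\cup\supp(\rho_0)\subset\overline D_r$ and the maximal time in Theorem~\ref{thm:Cauchy-Lip} depends only on this compact set, cf.\ Remark~\ref{rem:indep-max-time}), and at each restart you must re-approximate $\rho_{T^*/2}$ by fresh empirical measures. The paper's approach sidesteps all of this by observing that the geometric inequality applies not just to the self-consistent dynamics but to the flow of $\V[\sigma]$ for \emph{any} $\sigma$ supported in $\overline D_r$, which decouples invariance from the fixed-point problem. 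Your detour through particles is more hands-on and perhaps conceptually appealing, but it uses strictly more machinery (Theorems~\ref{thm:stability} and~\ref{thm:mfl}) to reach the same conclusion.

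A minor point: your chain $\cos d(\North,y)\geq \cos d(x,\North)\geq \cos d(x,\North)\cos d(x,y)\geq 0$ tacitly assumes $\cos d(x,y)\geq 0$, which need not hold on $\S$; but the conclusion $\cos d(\North,y)\geq \cos d(x,\North)\cos d(x,y)$ still holds when $\cos d(x,y)<0$, since then the right-hand side is nonpositive while the left is positive.
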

\begin{proof} We will use the global version of the Cauchy-Lipschitz theorem presented in the Appendix; see Theorem \ref{thm:global-Cauchy-Lip} and also Lemma \ref{lem:interaction-complete-global} for its application to the interaction velocity field. By abuse of notation, let us write $\P(\overline D_r)$ for the set of Borel probability measures on $\S$ which are supported within $\overline D_r$. By Theorem \ref{thm:global-Cauchy-Lip} and Lemma \ref{lem:interaction-complete-global}, the map
\bes
	\Gamma(\sigma)(t) = \Psi_{v[\sigma]}^t \# \rho_0, \quad \mbox{for all $\sigma \in \Cont([0,\infty);\P(\overline D_r))$ and $t\in[0,\infty)$},
\ees
is well-defined, where $\Psi_{v[\sigma]}^t$ is the unique global flow map generated by $(\V[\sigma],\supp(\rho_0))$. By following the same approach as in the proof of Theorem \ref{thm:well-posedness}, we get that $\Gamma$ is a map from $(\Cont([0,\infty);\P(\overline D_r)),\bd_1)$ into itself. We also get the existence of a time $T>0$ and a constant $\overline C \in (0,1)$ such that the restriction of $\Gamma$ to $(\Cont([0,T);\P(\overline D_r)),\bd_1)$ is a contraction, which means that there exists a unique $\rho \in \Cont([0,T);\P(\overline D_r))$ such that
	\bes
		\rho_t = \Psi_{\V[\rho]}^t \# \rho_0 \quad \mbox{for all $[0,T)$}.
	\ees
From the proof of Theorem \ref{thm:well-posedness} we note that the time $T$ is independent of $\rho_0$. Therefore, we can iteratively patch solutions together continuously through time to get the existence of a unique weak solution among curves in $\Cont([0,\infty);\P(\overline D_r))$, which concludes the proof.
\end{proof}


We now get the analogous result of Proposition \ref{prop:inv-cont} for the discrete model \eqref{eq:characteristics-particles}:

\begin{prop}[Global well-posedness in discrete model]
\label{prop:inv-disc}
Let $K$ satisfy \ref{hyp:K} with $g'\geq0$. Take $\N$ to be a positive integer and consider a collection of masses $(m_i)_{i=1}^{\N} \subset (0,1)$ such that $\sum_{i=1}^{\N} m_i = 1$, and points $(x_{i}^0)_{i=1}^{\N} \subset \overline D_r$ for some $r<\pi/2-\e$. Then, there exists a unique global collection of trajectories $(x_i)_{i=1}^{\N}$ that satisfies, for all $i\in \{1,\dots,n\}$ and $t\in[0,\infty)$, $x_i(t)\in \overline D_r$ and
\be\label{eqn:model-disc}
	\begin{cases} x_i'(t) = -\displaystyle \sum_{j=1}^{\N} m_i \nabla_{\Sph^\dim}K_{x_j(t)}(x_i(t)),\\ x_i(0) = x_i^0. \end{cases}
\ee
\end{prop}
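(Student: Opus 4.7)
The plan is to reduce this discrete result to the continuum global well-posedness of Proposition~\ref{prop:inv-cont} by means of the particle--measure correspondence already established in Lemma~\ref{lem:atomic-sol}. Because Proposition~\ref{prop:inv-cont} already delivers both global existence and the invariance of $\overline D_r$, there is no need for a separate continuation argument or a direct Lyapunov argument on $\max_i d(x_i(t),\North)$ for the ODE system.

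First I would form the atomic initial measure $\rho_0^{\N} := \sum_{i=1}^{\N} m_i \delta_{x_i^0} \in \P(\S)$, whose support lies in $\overline D_r$. Applying Proposition~\ref{prop:inv-cont} to $\rho_0^{\N}$ then yields a unique global weak solution $\rho^{\N} \in \Cont([0,\infty); \P(\S))$ with $\supp(\rho_t^{\N}) \subset \overline D_r$ for all $t\geq 0$, satisfying $\rho_t^{\N} = \Psi_{\V[\rho^{\N}]}^t \# \rho_0^{\N}$ for the corresponding global flow map. Since the push-forward of an atomic measure stays atomic, setting $x_i(t) := \Psi_{\V[\rho^{\N}]}^t(x_i^0)$ automatically gives
\[
\rho_t^{\N} = \sum_{i=1}^{\N} m_i \delta_{x_i(t)}, \qquad x_i\: [0,\infty) \to \overline D_r.
\]
The defining ODE of the flow map, combined with the reduction of the convolution $\V[\rho^{\N}](\cdot,t)$ against an atomic measure to a finite sum, recovers precisely the right-hand side of \eqref{eqn:model-disc}.

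For uniqueness, I would reverse the argument of Lemma~\ref{lem:atomic-sol}: any other global collection $(\tilde x_i)_{i=1}^{\N}$ solving \eqref{eqn:model-disc} in $\overline D_r$ generates, through its empirical measure, an atomic global weak solution of \eqref{eqn:model} starting from $\rho_0^{\N}$, which the uniqueness clause of Proposition~\ref{prop:inv-cont} forces to coincide with $\rho^{\N}$; reading off the delta masses labelled by the initial condition then gives $\tilde x_i = x_i$. The only mildly delicate step is this final identification of particles from an atomic measure, but it is immediate here since each $x_i$ is continuous in time and the labels $x_i(0) = x_i^0$ are fixed by the initial data.
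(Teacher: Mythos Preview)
Your reduction to Proposition~\ref{prop:inv-cont} via the particle--measure correspondence is correct and clean; the paper takes a different (more direct) route. Rather than going through the continuum solution, the paper argues local well-posedness exactly as in Lemma~\ref{lem:atomic-sol} and then upgrades to a global flow by applying the global Cauchy--Lipschitz theorem (Theorem~\ref{thm:global-Cauchy-Lip}) directly to the $\N$-particle ODE, checking the inward-pointing condition $\log_x \North \cdot \nabla_{\Sph^\dim} K_y(x) \leq 0$ for $x\in \S\setminus D_r$ and $y\in \overline D_r$ (as in the proof of Lemma~\ref{lem:interaction-complete-global}). Your approach has the advantage of reusing Proposition~\ref{prop:inv-cont} wholesale and making the invariance of $\overline D_r$ automatic; the paper's approach avoids the detour through measure-valued solutions and keeps the argument at the ODE level.

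One small point on your uniqueness argument: the phrase ``reading off the delta masses labelled by the initial condition'' is a bit loose, especially if some $x_i^0$ coincide. The crisper way to finish is to note that once $\tilde\rho^{\N}=\rho^{\N}$, the velocity fields $\V[\tilde\rho^{\N}]$ and $\V[\rho^{\N}]$ are identical, so $\tilde x_i$ and $x_i$ solve the \emph{same} initial-value problem for the flow of $\V[\rho^{\N}]$; ODE uniqueness (Theorem~\ref{thm:Cauchy-Lip}) then gives $\tilde x_i = x_i$ directly, with no need to match atoms.
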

\begin{proof}
The local well-posedness follows as in the proof of Lemma \ref{lem:atomic-sol}, while the global extension follows directly by applying Theorem \ref{thm:global-Cauchy-Lip} and the fact that 
\bes
	\log_x \North \cdot \nabla_{\Sph^\dim}K_{y}(x) \leq 0 \quad \mbox{for all $x\in \S\setminus D_r$ and $y\in \overline D_r$},
\ees
as can be inferred from the proof of Lemma \ref{lem:interaction-complete-global}.
\end{proof}

\begin{rem}
	In the terminology of dynamical systems theory, Propositions \ref{prop:inv-cont} and \ref{prop:inv-disc} show that any closed disk in $\S$ is an invariant set for the aggregation dynamics given by \eqref{eqn:model} and \eqref{eqn:model-disc}, respectively.
\end{rem}


\subsection{Asymptotic consensus in the continuum model}
\label{subsect:consensus-cont}
We consider the asymptotic behaviour in the continuum model. Specifically, we study the formation of consensus by investigating the behaviour of an energy functional. 

As discussed in \cite{FeZh2019}, model \eqref{eqn:model} is a gradient flow with respect to an energy functional. For the model set up on $\S$, this energy functional $E\:\P(\S)\to\R$ is given by:
\begin{equation}
\label{eqn:energy-cont}
	E[\rho]= \frac{1}{2}\int_{\S} \int_{\S} K(x, y)\d\rho(x)\d\rho(y), \qquad \mbox{for all $\rho\in\P(\S)$}.
\end{equation}
Because $K$ is assumed to satisfy \ref{hyp:K}, it is bounded and therefore $E$ is indeed well-defined on $\P(\S)$. To simplify notation, given a weak solution $\rho$ to \eqref{eqn:model} defined on $[0,\infty)$ which is clear from context, we shall write $t\mapsto E(t)$ the map given by $E(t) = E[\rho(t)]$ for all $t\in[0,\infty)$ and by $\V\:\S\times [0,\infty)$ the function $\V(x,t) = \V[\rho](x,t)$ for all $(x,t)\in\S\times[0,\infty)$, where we recall that $\V[\rho]$ is the interaction velocity field defined in \eqref{eqn:v-field}.

First, we would like to show that any global weak solution $\rho$ to \eqref{eqn:model} starting inside a closed disk $\overline D_r$ with $r<\pi/2-\e$ satisfies
\bes
	\lim_{t\to\infty} \int_\S |\V(x,t)|^2 \d\rho_t(x) = 0.
\ees
To this end, we will apply Barbalat's lemma \cite{Barbalat1959} which means we need to show that $E(t)$ has a finite limit as $t\to\infty$ and $t\mapsto E''(t)$ is bounded on $[0,\infty)$.

\begin{lem}\label{lem:E-lim}
	Let $K$ satisfy \ref{hyp:K} with $g'\geq0$, and let $\rho_0\in\P(\S)$ be such that $\supp(\rho_0)\subset \overline D_r$ for some $r<\pi/2-\e$. Write $\rho\in\Cont([0,\infty);\P(\S))$ the global weak solution to \eqref{eqn:model} starting from $\rho_0$ from Proposition \ref{prop:inv-cont}. Then $E(t)\to E_\infty$ as $t\to\infty$ for some $E_\infty\in\R$.
\end{lem}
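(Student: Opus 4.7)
The strategy is to show that $t\mapsto E(t)$ is nonincreasing on $[0,\infty)$ and bounded from below, which then immediately yields convergence to some $E_\infty \in \R$. The monotonicity is expected because \eqref{eqn:model} is the gradient flow of $E$ with respect to the $2$-Wasserstein distance; concretely I expect to derive the identity
\begin{equation*}
	E'(t) = -\int_\S \|\V(x,t)\|_x^2 \d\rho_t(x), \qquad \mbox{for all $t\in[0,\infty)$}.
\end{equation*}

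To make this rigorous, I would use the push-forward representation $\rho_t = \Psi^t_{\V[\rho]} \# \rho_0$ of Proposition \ref{prop:inv-cont} to rewrite
\begin{equation*}
	E(t) = \frac{1}{2} \int_{\S\times\S} K(\Psi^t_{\V[\rho]}(x),\Psi^t_{\V[\rho]}(y)) \d\rho_0(x) \d\rho_0(y),
\end{equation*}
which now has a $t$-independent reference measure. The integrand is smooth in $t$ because $\Psi^t_{\V[\rho]}(x) \in \overline D_r \subset \S$ for all $t$ and $x \in \supp(\rho_0)$ (by the invariance part of Proposition \ref{prop:inv-cont}), so the pair $(\Psi^t_{\V[\rho]}(x),\Psi^t_{\V[\rho]}(y))$ stays away from the cut locus, and $K$ is $\Cont^1$ there by \ref{hyp:K}. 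The time derivative of the integrand is uniformly bounded on $\supp(\rho_0)\times\supp(\rho_0)$ by Lemma \ref{lem:dist-flow-maps-K} (which bounds $\V$) and the boundedness of $\grad_{\Sph^\dim} K$ on $\overline D_r \times \overline D_r$. Differentiation under the integral sign is then justified by dominated convergence. Using \eqref{eq:characteristics-general}, the chain rule, and the symmetry $K(x,y) = K(y,x)$, one obtains
\begin{equation*}
	E'(t) = \int_\S \left( \int_\S \ap{\grad_{\Sph^\dim} K_y(x), \V(x,t)}_x \d\rho_t(y) \right) \d\rho_t(x) = -\int_\S \|\V(x,t)\|_x^2 \d\rho_t(x) \leq 0,
\end{equation*}
after pushing forward under $\Psi^t_{\V[\rho]}$ and using the definition \eqref{eqn:v-field} of $\V[\rho]$.

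For the lower bound, note that since $\supp(\rho_t) \subset \overline D_r$ for all $t\in[0,\infty)$, we have $d(x,y) \leq 2r < \pi - 2\e$ for all $x,y \in \supp(\rho_t)$, so $K(x,y) = g(d(x,y)^2) \geq \inf_{s\in[0,4r^2]} g(s) =: m > -\infty$ by continuity of $g$. Hence $E(t) \geq m/2$ for all $t\in[0,\infty)$. Combined with the monotonicity, the monotone convergence theorem (for real sequences) yields $E(t) \to E_\infty$ as $t\to\infty$ for some $E_\infty \in [m/2,E(0)]$.

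The main technical obstacle is the differentiation of $E$ along the flow: one must justify exchanging $\der/\der t$ with the double integral, which is where the push-forward trick (moving the time dependence into the integrand against the fixed reference measure $\rho_0\otimes\rho_0$) together with the uniform-in-$t$ invariance in $\overline D_r$ is essential. Everything else—symmetry of $K$, the chain rule, and extracting $-\|\V\|^2$ via \eqref{eqn:v-field}—is routine.
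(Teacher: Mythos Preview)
Your proposal is correct and follows essentially the same approach as the paper: both rewrite $E(t)$ via the push-forward against the fixed measure $\rho_0\otimes\rho_0$, differentiate under the integral using the chain rule and the symmetry of $K$ to obtain $E'(t)=-\int_\S |\V(x,t)|^2\d\rho_t(x)\leq 0$, and then conclude by the lower bound coming from $\supp(\rho_t)\subset\overline D_r$. If anything, your write-up is slightly more explicit than the paper's in justifying the interchange of derivative and integral and in exhibiting the lower bound $m/2$.
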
 
\begin{proof}
	Writing $\Psi_\V$ for the global flow map generated by $(\V,\supp(\rho_0))$ and using the push-forward formulation of $\rho$ and the chain rule, one can compute, for all $t\in[0,\infty)$,
\begin{align}
\label{eqn:dEdt-cont}
	E'(t)&= \frac12 \frac{\der}{\der t} \int_{\S}\int_\S K(\Psi_{\V}^t(x),\Psi_{\V}^t(y)) \d\rho_0(x)\d\rho_0(y) \nonumber\\
	&=  \int_{\S} \grad_{\Sph^\dim}K * \rho_t(\Psi_{\V}^t(x)) \cdot \V(\Psi_{\V}^t(x),t) \d\rho_0(x) \nonumber\\
	& = -  \int_{\S} | \V(\Psi_{\V}^t(x),t)|^2 \d\rho_0(x) \nonumber\\
	&= - \int_\S |\V(x,t)|^2 \d\rho_t(x) \leq 0,
\end{align}
where for the second equality we used the symmetry of $K$. Note that the last term in \eqref{eqn:dEdt-cont} is well-defined and bounded by Lemma \ref{lem:dist-flow-maps-K}.

Proposition \ref{prop:inv-cont} ensures that the global solution $\rho$ satisfies $\supp(\rho_t)\subset\overline D_r$, and since $\overline D_r$ is compact, the map $t\mapsto E(t)$ is bounded below (because $K$ is bounded on compact sets). Moreover, $t\mapsto E(t)$ is nonincreasing by \eqref{eqn:dEdt-cont} and we thus conclude $E(t) \to E_\infty$ as $t\to\infty$ for some $E_\infty \in\R$.
\end{proof}

\begin{lem}
\label{lem:d2E_dt2}
	Let $K$ satisfy \ref{hyp:K} and $r<\pi/2-\e$. Suppose that $g'\geq0$ and $g'$ is continuously differentiable on $[0,4r^2]$. Let $\rho_0\in\P(\S)$ be such that $\supp(\rho_0)\subset \overline D_r$, and write $\rho\in\Cont([0,\infty),\P(\S))$ the global weak solution to \eqref{eqn:model} starting from $\rho_0$ from Proposition \ref{prop:inv-cont}. Then $E''$ is bounded on $[0,\infty)$.
\end{lem}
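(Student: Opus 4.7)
The plan is to differentiate once more in $t$ the push-forward expression for $E'(t)$ established in the proof of Lemma \ref{lem:E-lim}, and bound the resulting integral using compactness of $\overline D_r$ together with the extra $C^1$ regularity assumed on $g'$. Writing $\Psi_\V$ for the global flow map generated by $(\V, \supp(\rho_0))$ and keeping the measure $\rho_0$ fixed, the two relevant identities are
\begin{equation*}
E'(t) = -\int_\S |\V(\Psi_\V^t(x), t)|^2 \d\rho_0(x), \qquad \V(\Psi_\V^t(x), t) = -\int_\S \nabla_{\Sph^\dim} K_{\Psi_\V^t(z)}(\Psi_\V^t(x)) \d\rho_0(z).
\end{equation*}
The idea is then to differentiate under the integral sign and apply the chain rule to each occurrence of $\Psi_\V^t$.

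The regularity that makes this rigorous comes from Proposition \ref{prop:inv-cont}: since $\supp(\rho_t)\subset\overline D_r$ for every $t\geq 0$ with $r<\pi/2-\e$, all points appearing in any $\nabla_{\Sph^\dim} K$ remain in the compact set $\overline D_r\times\overline D_r$, strictly inside the open hemisphere. On this set the squared distance and the Riemannian logarithm are smooth, and since $d(x,y)^2\leq 4r^2$ and $g'\in C^1([0,4r^2])$, the formula \eqref{eqn:gradK-gen} shows that
\begin{equation*}
(x,y)\mapsto \nabla_{\Sph^\dim} K_y(x) = -2\, g'(d(x,y)^2)\log_x y
\end{equation*}
is $C^1$ as an $\R^{\dim+1}$-valued function on $\overline D_r\times\overline D_r$, with uniformly bounded first-order partial derivatives $A_1(x,y) := D_x\, \nabla_{\Sph^\dim} K_y(x)$ and $A_2(x,y) := D_y\, \nabla_{\Sph^\dim} K_y(x)$.

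Applying the chain rule and differentiating under the integral sign I would then obtain
\begin{equation*}
E''(t) = -2\int_\S \V(\Psi_\V^t(x), t)\cdot \frac{\der}{\der t}\V(\Psi_\V^t(x), t)\d\rho_0(x),
\end{equation*}
with
\begin{equation*}
\frac{\der}{\der t}\V(\Psi_\V^t(x), t) = -\int_\S \Bigl( A_1(\Psi_\V^t(x),\Psi_\V^t(z))\, \V(\Psi_\V^t(x), t) + A_2(\Psi_\V^t(x),\Psi_\V^t(z))\, \V(\Psi_\V^t(z), t)\Bigr)\d\rho_0(z).
\end{equation*}
Combining the uniform bound on $A_1$ and $A_2$ on $\overline D_r\times\overline D_r$ from the previous paragraph with the uniform bound on $\V$ on $\S\times[0,\infty)$ from Lemma \ref{lem:dist-flow-maps-K}, and using that $\rho_0$ is a probability measure, yields $|E''(t)|\leq C$ for some constant $C$ independent of $t$.

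The only delicate point in the argument is justifying the differentiation under the integral sign (and the chain-rule computation along two independent flow trajectories simultaneously); all estimates themselves are immediate from compactness. This justification is provided by the $C^1$ regularity of the integrand on the compact set $\overline D_r\times\overline D_r$ together with dominated convergence, which is precisely where the $C^1$ hypothesis on $g'$ is used and where the confinement afforded by Proposition \ref{prop:inv-cont} is essential, as it keeps the dynamics uniformly away from both the cut locus and the diagonal singularity of $d$.
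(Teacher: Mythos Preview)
Your proposal is correct and follows essentially the same route as the paper: differentiate the push-forward expression for $E'(t)$, apply the chain rule to the two flow trajectories inside $\nabla_{\Sph^\dim} K_{\Psi_\V^t(z)}(\Psi_\V^t(x))$, and bound everything using the confinement $\supp(\rho_t)\subset\overline D_r$ from Proposition \ref{prop:inv-cont} together with the bound on $\V$ from Lemma \ref{lem:dist-flow-maps-K}. The only cosmetic difference is that the paper writes out the two partial derivatives explicitly in intrinsic terms (the Hessian $\Hess_{\Sph^\dim} d_y^2$ and the differential $\der\log_x(y)$) rather than packaging them as your $A_1,A_2$ via the ambient $\R^{\dim+1}$ structure, but the content is the same. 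One small inaccuracy: the squared distance $d^2$ is already smooth across the diagonal, so the confinement is needed only to stay away from the cut locus, not from any ``diagonal singularity.''
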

\begin{proof}
By Proposition \ref{prop:inv-cont} we know that $\supp(\rho_t)\subset \overline D_r$ for all $t\in[0,\infty)$. We write $\Psi_\V$ for the global flow map generated by $(\V,\supp(\rho_0))$, which satisfies $\Psi_\V^t(x)\in \overline D_r$ for all $x\in\supp(\rho_0)$ and $t\in[0,\infty)$. We know $E''$ exists by continuity of $g''$ on $[0,4r^2]$, and from the computation in \eqref{eqn:dEdt-cont} we have, for all $t\in[0,\infty)$,
\begin{align}
\label{eq:E-second}
	E''(t) &= - \frac{\der}{\der t}  \int_{\S} | \V(\Psi_{\V}^t(x),t)|^2 \d\rho_0(x) \nonumber   \\
	&= -2 \int_\S \frac{\der}{\der t} \V(\Psi_{\V}^t(x),t) \cdot  \V(\Psi_{\V}^t(x),t) \d\rho_0(x).
\end{align}
By definition of $v$ and the formulation of push-forward, we find:
\be
\label{eq:E-second2}
\frac{\der}{\der t} \V(\Psi_{\V}^t(x),t) = - \int_\S  \frac{\der}{\der t} \grad_{\Sph^\dim} K_{\Psi_{\V}^t(y)}(\Psi_{\V}^t(x)) \d\rho_0(y).
\ee
To apply the product rule to compute the integrand above, set the following notation for $\grad_{\Sph^\dim}K_y(x)$ when one of the variables is fixed and the other changes:
\bes
u_y(x) := \grad_{\Sph^\dim}K_y(x), \quad \mbox{and} \quad w_x(y) := \grad_{\Sph^\dim}K_y(x).
\ees
Then, by the product and chain rules, the above integrand becomes
\begin{equation}
\label{eqn:prod-rule}
\frac{\der}{\der t} \grad_{\Sph^\dim} K_{\Psi_{\V}^t(y)}(\Psi_{\V}^t(x))  =  \der u_{\Psi_\V^t(y)}(\Psi_\V^t(x))(v(\Psi_\V^t(x),t)) + \der w_{\Psi_\V^t(x)}(\Psi_\V^t(y))(v(\Psi_\V^t(y),t)).
\end{equation}
Using the form of the interaction potential given in \ref{hyp:K}, we have, for all $x,y\in\S$ and $\alpha\in T_x\Sph^\dim$,
\bes
	\der u_y(x)(\alpha) = \Hess_{\Sph^\dim} K_y(x) \alpha = g''(d(x,y)^2) \ap{\grad_{\Sph^\dim} d_y^2(x),\alpha}_x \grad_{\Sph^\dim} d_y^2(x) + g'(d(x,y)^2) \Hess_{\Sph^\dim} d_y^2(x) \alpha,
\ees
where $\Hess_{\Sph^\dim}$ stands for the Hessian operator on the manifold $\Sph^\dim$. Also, for all $\beta\in T_y\Sph^\dim$, 
\bes
	\der w_x(y)(\beta) = g''(d(x,y)^2) \ap{\grad_{\Sph^\dim}d_x^2(y), \beta}_y \grad_{\Sph^\dim} d_y^2(x) - 2g'(d(x,y)^2) \der \log_x(y)(\beta).
\ees

As $g'$ and $g''$ are continuous on $[0,4r^2]$, the maps $(x,y)\mapsto g'(d(x,y)^2)$ and $(x,y)\mapsto g''(d(x,y)^2)$ are bounded on the compact set $\overline D_r \times\overline D_r$. Furthermore, by smoothness of the manifold $\Sph^\dim$, the map $(x,y)\mapsto d_y^2(x)$ is smooth on the geodesically convex set $\overline D_r\times \overline D_r$. This  implies that $(x,y)\mapsto \grad_{\Sph^\dim} d_y^2(x)$, $(x,y)\mapsto \Hess_{\Sph^\dim} d_y^2(x)$ and $(x,y) \mapsto \der\log_x(y)$ are bounded on $\overline D_r\times \overline D_r$, from which we get that $(x,y)\mapsto \der u_y(x)$ and $(x,y)\mapsto \der w_x(y)$ are bounded on $\overline D_r\times \overline D_r$. Then, as the map $(x,t)\mapsto \V(x,t)$ is bounded on $\overline D_r\times [0,\infty)$ by Lemma \ref{lem:dist-flow-maps-K}, we finally obtain, by \eqref{eq:E-second}, \eqref{eq:E-second2} and \eqref{eqn:prod-rule}, that $E''$ is bounded on $[0,\infty)$.
\end{proof} 
 
We can now apply Barbalat's lemma:
\begin{prop}
\label{prop:Barbalat}
	Let $K$ satisfy \ref{hyp:K} and $r<\pi/2-\e$. Suppose that $g'\geq0$ and $g'$ is continuously differentiable on $[0,4r^2]$. Let $\rho_0\in\P(\S)$ be such that $\supp(\rho_0)\subset \overline D_r$, and consider $\rho\in\Cont([0,\infty),\P(\S))$ the global weak solution to \eqref{eqn:model} starting from $\rho_0$ from Proposition \ref{prop:inv-cont}. Then
\[
\lim_{t\rightarrow\infty}\int_\S |\V(x, t)|^2 \d\rho_t(x) = 0.
\]
\end{prop}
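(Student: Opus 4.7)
The proof is essentially a direct application of Barbalat's lemma, since the two preceding lemmas have already done the analytical work. Let me sketch the plan.

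First, I would recall from the computation in the proof of Lemma \ref{lem:E-lim} (specifically, equation \eqref{eqn:dEdt-cont}) the identity
\[
	E'(t) = -\int_\S |\V(x,t)|^2 \d\rho_t(x), \qquad \text{for all } t\in[0,\infty).
\]
Thus the quantity whose vanishing we want to establish is exactly $-E'(t)$, so it suffices to show that $E'(t)\to 0$ as $t\to\infty$.

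Next, I would invoke Barbalat's lemma in its standard form: if $h\colon[0,\infty)\to\R$ is uniformly continuous and $\int_0^\infty h(s)\,\d s$ exists and is finite, then $h(t)\to 0$ as $t\to\infty$. Apply this to $h = -E'\geq 0$. The integrability follows from Lemma \ref{lem:E-lim}: integrating $E'\leq 0$ on $[0,t]$ yields
\[
	\int_0^t\! \int_\S |\V(x,s)|^2 \d\rho_s(x)\,\d s = E(0) - E(t),
\]
and since $E(t)\to E_\infty\in\R$, the right-hand side remains bounded as $t\to\infty$, giving a finite improper integral. Uniform continuity of $h$ follows from Lemma \ref{lem:d2E_dt2}: boundedness of $E''$ on $[0,\infty)$ makes $E'$ Lipschitz, hence uniformly continuous. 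Barbalat's lemma then yields $E'(t)\to 0$, which is equivalent to the desired conclusion.

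There is really no main obstacle: the two ingredients for Barbalat (finite improper integral and uniform continuity) are supplied verbatim by Lemmas \ref{lem:E-lim} and \ref{lem:d2E_dt2}, so the proposition is essentially a one-line consequence. The only care needed is to cite the correct form of Barbalat's lemma and to observe that boundedness of $E''$ implies uniform continuity of $E'$ via the mean value theorem, rather than arguing uniform continuity of $t\mapsto\int_\S |\V|^2 \d\rho_t$ directly (which would require reproducing the estimates already done in Lemma \ref{lem:d2E_dt2}).
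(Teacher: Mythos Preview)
Your proposal is correct and follows essentially the same approach as the paper: invoke Lemmas \ref{lem:E-lim} and \ref{lem:d2E_dt2} to verify the hypotheses of Barbalat's lemma, conclude $E'(t)\to 0$, and then read off the result from \eqref{eqn:dEdt-cont}. The only cosmetic difference is that the paper states Barbalat's lemma in the equivalent form ``$E(t)$ has a finite limit and $E''$ is bounded $\Rightarrow E'(t)\to 0$'' rather than your ``$E'$ is integrable and uniformly continuous'' formulation.
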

\begin{proof}
	By Lemmas \ref{lem:E-lim} and \ref{lem:d2E_dt2} we know that $E(t)$ has a finite limit as $t \to \infty$ and $t\mapsto E''(t)$ is bounded on $[0,\infty)$. From Barbalat's lemma we then conclude that $E'(t) \to 0$ as $t \to \infty$, which by \eqref{eqn:dEdt-cont} leads to the desired result.
\end{proof}

From H\"{o}lder's inequality, an immediate consequence of this result is:
\be\label{C-1}
	\lim_{t\rightarrow0} \int_\S \V(x, t) \d \rho_t(x) = 0,
\ee
where $\rho$ and $\V$ are as in Proposition \ref{prop:Barbalat}.

We now want to conclude the asymptotic limit for the continuum model. For the considerations that follow it is convenient to use the following notation:
\[
G(x, y) = 2g'(d(x, y)^2)\frac{d(x, y)}{\sin(d(x, y))}, \qquad \mbox{for all $x,y\in\S$, $x \neq y$}.
\]
We also define $G(x,x) = 2 g'(0)$ for all $x \in \S$, by taking the limit $y \to x$ in the above.

Throughout the rest of this section we will make use of the following assumptions on $G$:
\begin{equation}\label{eqn:G-hyp1}
	G(x, y)\geq \c, \qquad \text{for all $x,y\in \S$, for some $\c>0$},
\end{equation}   
and 
\begin{equation} \label{eqn:G-hyp2}
	G(x_1, y_1)\geq G(x_2, y_2), \qquad \text{for all $x_1,y_1,x_2,y_2\in\S$ such that $d(x_1, y_1)\geq d(x_2, y_2)$}.
\end{equation}
Note that by \eqref{eqn:log-sphere}, given a global weak solution $\rho$ to \eqref{eqn:model} we have 
\begin{equation} \label{eqn:vG}
	\V(x,t) = - \int_{\S} G(x,y) (y-(x \cdot y) x) \d\rho_t(y), \qquad \mbox{for all $(x,t)\in\S\times [0,\infty)$}.
\end{equation}
We set an additional notation and define $\h\: \S\times [0,\infty) \to \R^d$ as:
\begin{equation} \label{eqn:defn-C}
	\h(x, t) = \int_{\S} G(x, y) y \d\rho_t(y), \qquad \mbox{for all $x,y \in \S\times [0,\infty)$},
\end{equation}
which enables us to rewrite \eqref{eqn:vG} further as
\begin{equation} \label{eqn:vC}
	\V(x,t) = -\h(x,t)+(\h(x,t) \cdot x) \, x.
\end{equation}

\begin{rem}
\label{rem:gG}
For convenience, we work with the assumptions \eqref{eqn:G-hyp1} and \eqref{eqn:G-hyp2} on $G$. We note, however, that in terms of the interaction function $g$, for \eqref{eqn:G-hyp1} and \eqref{eqn:G-hyp2} to be satisfied it is sufficient to assume that $g' \geq \c/2$ and $g'$ is nondecreasing.
\end{rem}

We present now some important technical lemmas which will be needed to prove our main consensus result given in Theorem \ref{thm:consensus-cont}. 
\begin{lem}\label{L-3}
	Let $\rho\in\P(\S)$ be such that $\supp(\rho)\subset \overline D_r$ for some $r<\pi/2-\e$, and assume that $G$ satisfies \eqref{eqn:G-hyp1}. Write $c(x) = \int_\S G(x,y) y \d\rho(y)$ for all $x\in\S$. Then,
\[
	\abs{c(x)} \geq \c \cos r \quad \mbox{for all $x\in \S$},
\]
and
\[
	c(x) \cdot z  \geq \abs{c(x)} \cos 2r, \quad \text{ for any $x\in\S$ and $z\in \overline D_r$}.
\]
\end{lem}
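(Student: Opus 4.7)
The plan is, for both bounds, to locate the direction of $c(x)\in\R^{\dim+1}$ relative to the north pole $\North$ on $\Sph^{\dim}$. Since $c(x)=\int_\S G(x,y)y\d\rho(y)$ is a nonnegatively weighted integral of points $y\in\overline D_r$, intuitively $c(x)$ should point close to $\North$, and this is exactly what both inequalities encode.

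For the first bound, I would simply use $|c(x)|\geq c(x)\cdot \North$ (since $|\North|=1$) and compute
\[
c(x)\cdot \North=\int_\S G(x,y)(y\cdot \North)\d\rho(y).
\]
For $y\in\supp(\rho)\subset\overline D_r$, we have $y\cdot \North=\cos d(y,\North)\geq \cos r$. Combined with $G\geq \h$ from \eqref{eqn:G-hyp1} and $\rho(\S)=1$, this yields $|c(x)|\geq \h\cos r$ directly.

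For the second bound, the cleanest route is to show that the unit vector $\hat c(x):=c(x)/|c(x)|$ is itself a point of $\overline D_r$ and then invoke the triangle inequality for the geodesic distance on $\Sph^{\dim}$. By the triangle inequality for $|\cdot|$ we have $|c(x)|\leq \int_\S G(x,y)\d\rho(y)$ (using $|y|=1$), while the previous calculation gives $c(x)\cdot \North\geq \cos r\int_\S G(x,y)\d\rho(y)$. Dividing, $\hat c(x)\cdot \North\geq \cos r$, so $d(\hat c(x),\North)\leq r$. For any $z\in\overline D_r$, the triangle inequality then yields $d(\hat c(x),z)\leq 2r<\pi-2\e$, and since $\cos$ is decreasing on $[0,\pi)$ we conclude
\[
c(x)\cdot z=|c(x)|\,\hat c(x)\cdot z=|c(x)|\cos d(\hat c(x),z)\geq |c(x)|\cos(2r).
\]

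The argument is elementary; the only subtlety I would flag is resisting the temptation to bound $c(x)\cdot z=\int G(x,y)(y\cdot z)\d\rho(y)$ pointwise by $\cos(2r)\int G\d\rho$, which fails to produce $|c(x)|\cos(2r)$ once $r>\pi/4$ (so $\cos(2r)<0$ and the inequality $|c(x)|\leq \int G\d\rho$ runs the wrong way). Passing through $\hat c(x)$ and exploiting $\hat c(x)\in\overline D_r$ sidesteps this. The nonvanishing of $|c(x)|$ needed to make sense of $\hat c(x)$ is automatic from the first bound.
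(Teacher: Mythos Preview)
Your proof is correct and follows essentially the same approach as the paper's: both obtain the first bound via $|c(x)|\geq c(x)\cdot \North$ together with $y\cdot \North\geq \cos r$ and $G\geq \c$, and both obtain the second bound by showing that $c(x)/|c(x)|\in\overline D_r$ and then using that the angle between two points of $\overline D_r$ is at most $2r$. Your remark that $|c(x)|>0$ follows automatically from the first bound is slightly sharper than the paper's phrasing, which treats the vanishing case separately as trivial.
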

\begin{proof} 
Since the support of $\rho$ lies on the closed geodesic disk $\overline D_r$, we have
\begin{equation}
\label{eqn:ydotN}
	y \cdot \North \geq \cos r, \qquad \text{ for all } y\in \supp(\rho).
\end{equation}
Hence, by \eqref{eqn:G-hyp1} we have, for all $x\in\S$,
\[
	\abs{c(x)} \geq  c(x) \cdot \North = \int_\S G(x, y) y \cdot \North \d\rho(y) \geq \c\cos r,
\]
proving the first inequality.

To prove the second inequality we fix $x\in\S$ and assume that $\int_\S G(x,y) y \d\rho(y) \neq 0$, otherwise the result is trivial. We note that the unit vector $c(x)/\abs{c(x)}$ lies on the closed geodesic disk $\overline D_r$. Indeed, from \eqref{eqn:ydotN} one gets:
\begin{align}
	\frac{c(x)}{\abs{c(x)}} \cdot \North =\frac{\int_\S G(x, y) y \cdot \North \d \rho(y)}{|\int_\S G(x, y)y \d\rho(y) |} \geq  \frac{\int_\S G(x, y) \cos r \d \rho(y)}{\int_\S G(x, y) \d\rho(y)}=\cos r.
\end{align}
Hence, the angle between $\North$ and $c(x)/\abs{c(x)}$ is smaller than or equal to $r$, and therefore $c(x)/\abs{c(x)}$ belongs to $\overline D_r$. Now, take any $z\in \overline D_r$. Since both $c(x)/\abs{c(x)}$ and $z$ belong to $\overline D_r$, the angle between these two vectors is smaller than or equal to $2r$, leading to the second inequality.
\end{proof}

\begin{lem}\label{L-5}
	Let $\rho\in\P(\S)$ be such that $\supp(\rho)\subset \overline D_r$ for some $r<\pi/2-\e$, and assume that $G$ satisfies \eqref{eqn:G-hyp2}. Then, for any $x_1,x_2 \in \S$, it holds that
\begin{equation}
\label{eqn:L5}
	\h(x_1) \cdot x_1+ \h(x_2) \cdot x_2 \leq \h(x_1) \cdot x_2 + \h(x_2) \cdot x_1,
\end{equation}
where $c$ is as in Lemma \ref{L-3}.
\end{lem}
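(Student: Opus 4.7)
The inequality \eqref{eqn:L5} can be rewritten as
\[
	(c(x_1) - c(x_2)) \cdot (x_1 - x_2) \leq 0,
\]
so the goal is to exhibit this scalar product as an integral with a pointwise non-positive integrand. My plan is to expand $c$ using its definition and carry out an elementary algebraic rearrangement, then exploit the monotonicity hypothesis \eqref{eqn:G-hyp2} together with the identity $y\cdot x = \cos(d(x,y))$ valid on the sphere.

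First, using the definition of $c$, I would write
\[
	(c(x_1)-c(x_2))\cdot(x_1-x_2) = \int_\S (G(x_1,y)-G(x_2,y))\bigl(y\cdot x_1 - y\cdot x_2\bigr)\d\rho(y),
\]
which reduces the lemma to showing that the integrand is non-positive for every $y\in\supp(\rho)$. Since $x_i \cdot y = \cos(d(x_i,y))$ and $d(x_i,y)\in[0,\pi)$ for $x_i\in\S$ and $y\in\supp(\rho)\subset\overline D_r$, and cosine is strictly decreasing on $[0,\pi]$, the factor $y\cdot x_1 - y\cdot x_2 = \cos(d(x_1,y)) - \cos(d(x_2,y))$ has the \emph{opposite} sign to $d(x_1,y) - d(x_2,y)$.

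The conclusion is then immediate from \eqref{eqn:G-hyp2}: I would split into the two cases $d(x_1,y) \geq d(x_2,y)$ and $d(x_1,y) \leq d(x_2,y)$. In the first case, \eqref{eqn:G-hyp2} gives $G(x_1,y) - G(x_2,y) \geq 0$ while the cosine monotonicity gives $y\cdot x_1 - y\cdot x_2 \leq 0$; in the second case both signs are reversed. In either case the product $(G(x_1,y)-G(x_2,y))(y\cdot x_1 - y\cdot x_2)$ is non-positive, so integration against the probability measure $\rho$ preserves the sign and yields the claim.

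I do not anticipate any real obstacle here: the proof is essentially a one-line co-monotonicity argument once the expression is rearranged. The only care needed is to verify that the arguments of $G$ and the distances are in the regime where \eqref{eqn:G-hyp2} applies (which is ensured by restricting to $x_1,x_2\in\S$ and $y\in\supp(\rho)\subset\overline D_r$), and to remark that no assumption beyond \eqref{eqn:G-hyp2} is actually used, so in particular the positivity condition \eqref{eqn:G-hyp1} plays no role in this lemma.
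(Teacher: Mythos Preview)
Your proposal is correct and follows essentially the same route as the paper: both rewrite the desired inequality as the integral $\int_\S (G(x_1,y)-G(x_2,y))(y\cdot x_1 - y\cdot x_2)\d\rho(y)$ and then observe, via \eqref{eqn:G-hyp2} and the monotonicity of cosine (equivalently, $d(x,y)=\arccos(x\cdot y)$), that the integrand is pointwise non-positive. Your additional remark that only \eqref{eqn:G-hyp2} is used is accurate and matches the lemma's hypotheses.
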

\begin{proof}
Let $x_1,x_2\in\S$. By a direct calculation, 
\begin{align}
&\h(x_1) \cdot x_1+ \h(x_2) \cdot x_2 - \h(x_1) \cdot x_2 - \h(x_2) \cdot x_1 \nonumber \\
&\qquad =\int_\S G(x_1, y)(y \cdot x_1- y \cdot x_2) \d \rho(y) + \int_\S G(x_2, y)(y \cdot x_2- y\cdot x_1) \d\rho(y) \nonumber \\
&\qquad =\int_\S (G(x_1, y)-G(x_2, y)) (y \cdot x_1- y \cdot x_2) \d\rho(y). \label{eqn:ineq-C}
\end{align}
If $y\in\S$ is such that $d(x_1, y)\geq d(x_2, y)$, then
\[
G(x_1, y)-G(x_2, y)\geq 0 \quad \text{ and } \quad y \cdot x_1- y \cdot x_2 \leq0, 
\]
where we used \eqref{eqn:G-hyp2} and the fact that $d(x_1,y)= \operatorname{arccos}(x_1 \cdot y)$ (and similarly for $x_2$).
Also, if $y\in\S$ is such that $d(x_1, y)\leq d(x_2, y)$, then
\[
G(x_1, y)-G(x_2, y)\leq 0,\quad \text{ and } \quad y \cdot x_1 - y \cdot x_2 \geq0.
\]
We conclude that the product
\[
(G(x_1, y)-G(x_2, y)) (y \cdot x_1- y \cdot x_2) \leq 0 \quad \mbox{for all $y\in\S$}.
\]
By \eqref{eqn:ineq-C}, one then concludes:
\[
\h(x_1) \cdot x_1+ \h(x_2) \cdot x_2 - \h(x_1) \cdot x_2 + \h(x_2) \cdot x_1 \leq 0. \qedhere
\]
\end{proof}

We finally give a lemma involving the asymptotic behaviour of the map $\h(x,t)$ defined in \eqref{eqn:defn-C}.
\begin{lem}\label{L-4}
	Let $K$ satisfy \ref{hyp:K} and $r<\pi/2-\e$. Suppose that $g'\geq0$ and $g'$ is continuously differentiable on $[0,4r^2]$. Also assume that $G$ satisfies \eqref{eqn:G-hyp1} (see Remark \ref{rem:gG}). Let $\rho_0\in\P(\S)$ be such that $\supp(\rho_0)\subset \overline D_r$, and consider $\rho\in\Cont([0,\infty);\P(\S))$ the global weak solution to \eqref{eqn:model} starting from $\rho_0$ from Proposition \ref{prop:inv-cont}. Then
\[
\lim_{t\rightarrow\infty} \int_\S (|\h(x, t)|- \h(x, t) \cdot x ) \d\rho_t(x) = 0.
\]
\end{lem}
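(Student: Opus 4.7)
The plan is to bound the nonnegative integrand $|\h(x,t)| - \h(x,t)\cdot x$ pointwise by a constant multiple of $|\V(x,t)|^2$ and then invoke Proposition \ref{prop:Barbalat}. The key observation is an algebraic factorization: since $\V(x,t) = -\h(x,t) + (\h(x,t)\cdot x)\, x$ by \eqref{eqn:vC} and $|x|=1$ for $x\in \Sph^\dim$, expanding the square yields
\[
|\V(x,t)|^2 = |\h(x,t)|^2 - (\h(x,t)\cdot x)^2 = \bigl(|\h(x,t)| - \h(x,t)\cdot x\bigr)\bigl(|\h(x,t)| + \h(x,t)\cdot x\bigr).
\]
Both factors on the right are nonnegative (by Cauchy--Schwarz), so it suffices to bound the second factor away from zero uniformly in $(x,t)\in \overline D_r \times [0,\infty)$.

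To obtain this uniform lower bound, I would use both statements of Lemma \ref{L-3}. By Proposition \ref{prop:inv-cont}, $\supp(\rho_t) \subset \overline D_r$ for all $t\geq 0$, and one is only ever interested in $x$ within $\overline D_r$ (since one ultimately integrates against $\rho_t$). Thus applying Lemma \ref{L-3} with $z = x\in \overline D_r$ gives $\h(x,t)\cdot x \geq |\h(x,t)| \cos 2r$, and consequently
\[
|\h(x,t)| + \h(x,t)\cdot x \geq |\h(x,t)|(1 + \cos 2r) = 2|\h(x,t)|\cos^2 r \geq 2 \c \cos^3 r,
\]
where I used the half-angle identity $1+\cos 2r = 2\cos^2 r$ and the first part of Lemma \ref{L-3}. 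Since $r < \pi/2 - \e < \pi/2$, this lower bound is strictly positive.

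Combining these two steps yields the pointwise inequality
\[
0 \leq |\h(x,t)| - \h(x,t)\cdot x \leq \frac{|\V(x,t)|^2}{2\c \cos^3 r}, \qquad \mbox{for all $(x,t) \in \overline D_r \times [0,\infty)$,}
\]
and integration against $\rho_t$ together with Proposition \ref{prop:Barbalat} yields the claimed limit. The only substantive obstacle is securing the strictly positive uniform lower bound on the ``$+$'' factor; this is exactly what the combination of the two parts of Lemma \ref{L-3} provides, and it crucially relies on the containment $\supp(\rho_t)\subset \overline D_r$ guaranteed by Proposition \ref{prop:inv-cont} as well as on $r < \pi/2$.
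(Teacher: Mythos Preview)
Your proof is correct and follows essentially the same approach as the paper: factor $|\V|^2 = (|\h|-\h\cdot x)(|\h|+\h\cdot x)$, bound the ``$+$'' factor below using both parts of Lemma~\ref{L-3}, and conclude via Proposition~\ref{prop:Barbalat}. The only cosmetic differences are that you simplify the constant using $1+\cos 2r = 2\cos^2 r$ and state the bound pointwise before integrating, whereas the paper carries the integral throughout.
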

\begin{proof}
By Proposition \ref{prop:Barbalat} and \eqref{eqn:vC} we have
\begin{equation}
\label{eqn:lim-vC}
\lim_{t\rightarrow\infty} \int_\S (|\h(x, t)|^2- (\h(x, t) \cdot x)^2) \d \rho_t(x)=0.
\end{equation}
From the second inequality in Lemma \ref{L-3}, we infer, for all $x\in\S$,
\[
	|\h(x, t)|+ \h(x, t) \cdot x \geq |\h(x, t)|(1+\cos2r).
\]
Hence, also using the first inequality in Lemma \ref{L-3}, 
\begin{align*}
	& \int_\S (|\h(x, t)|^2- (\h(x, t) \cdot x)^2) \d \rho_t(x) \\
	& \qquad \qquad =\int_\S (|\h(x, t)|-\h(x, t) \cdot x)(|\h(x, t)|+ \h(x, t)\cdot x)\d \rho_t(x) \\
	&\qquad \qquad \geq \c\cos r(1+\cos2r)\int_\S  (|\h(x, t)|-\h(x, t) \cdot x)\ \d \rho_t(x) \geq 0.
\end{align*}
From the estimate above and \eqref{eqn:lim-vC} we conclude the proof.
\end{proof}

We now state and prove an important lemma towards our consensus result.
\begin{lem} \label{lem:consensus-cont}
	Let $K$ satisfy \ref{hyp:K} and $r<\pi/2-\e$. Suppose that $g'\geq0$ and $g'$ is continuously differentiable on $[0,4r^2]$. Also assume that $G$ satisfies \eqref{eqn:G-hyp1} and \eqref{eqn:G-hyp2} (see Remark \ref{rem:gG}). Let $\rho_0\in\P(\S)$ be such that $\supp(\rho_0)\subset \overline D_r$, and write $\rho\in\Cont([0,\infty);\P(\S))$ the global weak solution to \eqref{eqn:model} starting from $\rho_0$ from Proposition \ref{prop:inv-cont}. Then
\begin{align*}
\lim_{t\rightarrow\infty} \int_\S \int_\S (1-x_1 \cdot x_2) \d \rho_t(x_1) \d\rho_t(x_2)=0.
\end{align*}
\end{lem}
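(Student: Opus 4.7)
My plan is to rewrite the target double integral in terms of the Euclidean mean of $\rho_t$ and then combine Lemmas \ref{L-3}--\ref{L-5} to show this mean has unit length in the limit. I will set $m_t := \int_\S x\, \d\rho_t(x) \in \R^{\dim+1}$, viewing the sphere as embedded in the ambient space, and observe by Fubini that
\[
	\int_\S \int_\S (1 - x_1 \cdot x_2)\, \d\rho_t(x_1)\, \d\rho_t(x_2) = 1 - |m_t|^2.
\]
The task therefore reduces to showing $|m_t| \to 1$ as $t \to \infty$.

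To establish this, I would first integrate the pointwise inequality of Lemma \ref{L-5} against the product measure $\rho_t \otimes \rho_t$; by symmetry in $(x_1,x_2)$ and since $\rho_t$ is a probability measure, this produces
\[
	\int_\S \h(x,t) \cdot x\, \d\rho_t(x) \leq m_t \cdot \int_\S \h(x,t)\, \d\rho_t(x).
\]
Applying the Cauchy--Schwarz inequality to the right-hand side, together with $|m_t| \leq 1$ (which follows from Jensen's inequality since $|x|=1$ on $\S$), then gives
\[
	\int_\S \h(x,t) \cdot x\, \d\rho_t(x) \leq |m_t| \int_\S |\h(x,t)|\, \d\rho_t(x).
\]
Rearranging yields
\[
	(1 - |m_t|) \int_\S |\h(x,t)|\, \d\rho_t(x) \leq \int_\S \bigl( |\h(x,t)| - \h(x,t) \cdot x \bigr) \d\rho_t(x).
\]
At this point Lemma \ref{L-3} provides the lower bound $\int_\S |\h(x,t)|\, \d\rho_t(x) \geq \c \cos r > 0$, while Lemma \ref{L-4} forces the right-hand side to $0$ as $t \to \infty$. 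I would conclude that $1 - |m_t| \to 0$, and the identity from the first step then gives the stated limit.

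The main obstacle is conceptual rather than technical: one has to notice that the target quantity collapses to $1 - |m_t|^2$ (where $m_t$ is the ambient Euclidean mean, not any intrinsic notion of barycentre on $\Sph^\dim$) and that Lemma \ref{L-5}, when integrated against $\rho_t \otimes \rho_t$, naturally couples $\h(\cdot,t)$ to $m_t$. Once these two observations are in place, the proof is a short chain of inequalities assembled from the preparatory lemmas, and no new estimates are required.
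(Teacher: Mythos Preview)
Your proof is correct, and it is genuinely more streamlined than the paper's argument. The key difference is the reduction step: you recognise at the outset that the target double integral equals $1-|m_t|^2$ with $m_t=\int_\S x\,\d\rho_t(x)$, and then aim directly at $|m_t|\to 1$. The paper never isolates this Euclidean mean; instead it subtracts $(x_1\cdot x_2)(\h(x_1)\cdot x_1+\h(x_2)\cdot x_2)$ from both sides of Lemma~\ref{L-5} before integrating, which produces four cross terms $\mathcal I_1,\dots,\mathcal I_4$ that are then shown to vanish separately, two via \eqref{C-1} and two via Lemma~\ref{L-4}.

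Your route bypasses this decomposition entirely: integrating Lemma~\ref{L-5} against $\rho_t\otimes\rho_t$ \emph{as is}, the two sides collapse by symmetry to the single inequality $\int_\S \h(x,t)\cdot x\,\d\rho_t(x)\leq m_t\cdot\int_\S \h(x,t)\,\d\rho_t(x)$, after which one Cauchy--Schwarz, one Jensen ($|m_t|\leq 1$), Lemma~\ref{L-3} for the lower bound on $\int_\S|\h|\,\d\rho_t$, and Lemma~\ref{L-4} for the vanishing right-hand side finish the argument. In particular you never need to invoke \eqref{C-1} directly. The paper's version is more hands-on and keeps everything at the level of the bilinear form $(1-x_1\cdot x_2)$, whereas yours exploits the ambient linear structure more aggressively; both rely on exactly the same preparatory lemmas, but your packaging is cleaner.
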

\begin{proof}
	At start we shall suppress all the dependences on $t\in[0,\infty)$ for clarity. For all $x_1,x_2 \in \S$, subtract $(x_1 \cdot x_2) (\h(x_1) \cdot x_1+ \h(x_2) \cdot x_2)$ on both sides of \eqref{eqn:L5} to get: 
\begin{align*}
&(1-x_1 \cdot x_2) (\h(x_1) \cdot x_1+ \h(x_2) \cdot x_2)\\
&\qquad  \leq \h(x_1) \cdot x_2- (\h(x_1) \cdot x_1)(x_1 \cdot x_2)+ \h(x_2) \cdot x_1- (\h(x_2) \cdot x_2) (x_1 \cdot x_2).
\end{align*}
	Integrating the above inequality and passing all terms to the right-hand side:
\begin{align*}
	&0 \leq \int_\S \int_\S \left( \h(x_1) \cdot x_2 - (\h(x_1) \cdot x_1) (x_1 \cdot x_2) \right) \d\rho(x_1)\d\rho(x_2) \\
	&\quad + \int_\S \int_\S \left( \h(x_2) \cdot x_1 - (\h(x_2) \cdot x_2)(x_1 \cdot x_2) \right) \d\rho(x_1)\d\rho(x_2) \\
	&\quad - \int_\S \int_\S (1-x_1 \cdot x_2)(\h(x_1) \cdot x_1) \d\rho(x_1)\d\rho(x_2) - \int_\S \int_\S (1-x_1 \cdot x_2)(\h(x_2) \cdot x_2) \d\rho(x_1) \d\rho(x_2).
\end{align*}
Now add $\int_\S\int_\S (1-x_1 \cdot x_2)(|\h(x_1)|+|\h(x_2)|) \d\rho(x_1)\d\rho(x_2)$ to both sides to get:
\begin{align*}
	&\int_\S\int_\S (1-x_1 \cdot x_2)(|\h(x_1)|+|\h(x_2)|) \d\rho(x_1)\d\rho(x_2)\\
	&\quad \leq \int_\S \int_\S \left( \h(x_1) \cdot x_2 - (\h(x_1) \cdot x_1) (x_1 \cdot x_2) \right) \d\rho(x_1)\d\rho(x_2) \quad (:=\mathcal{I}_1) \\
	&\qquad + \int_\S \int_\S \left( \h(x_2) \cdot x_1 - (\h(x_2) \cdot x_2)(x_1 \cdot x_2) \right) \d\rho(x_1)\d\rho(x_2) \quad (:=\mathcal{I}_2)\\
	&\qquad + \int_\S \int_\S (1-x_1 \cdot x_2)(|c(x_1)|-\h(x_1) \cdot x_1) \d\rho(x_1)\d\rho(x_2) \quad (:=\mathcal{I}_3)\\
	&\qquad + \int_\S \int_\S (1-x_1 \cdot x_2)(|c(x_2)| - \h(x_2) \cdot x_2) \d\rho(x_1) \d\rho(x_2) \quad (:=\mathcal{I}_4).
\end{align*}
From the first inequality in Lemma \ref{L-3}, we have:
\begin{equation*}
	0\leq2\c\cos r  \int_\S\int_\S (1-x_1 \cdot x_2)\d\rho(x_1)\d\rho(x_2) \leq \int_\S\int_\S(1-x_1 \cdot x_2)(|\h(x_1)|+|\h(x_2)|) \d\rho(x_1)\d\rho(x_2).
\end{equation*}
Combining the above inequalities we get:
\be \label{eqn:I1-4}
0 \leq 2\c\cos r\int_\S\int_\S(1-x_1 \cdot x_2)\d\rho(x_1)\d\rho(x_2) \leq \mathcal{I}_1 + \mathcal{I}_2 + \mathcal{I}_3 + \mathcal{I}_4.
\ee

We now show that the each term $\mathcal{I}_1$, $\mathcal{I}_2$, $\mathcal{I}_3$ and $\mathcal{I}_4$ converges to $0$ as $t \to \infty$. Indeed, by restoring
the dependence on t, we have:
\begin{align*}
	\mathcal{I}_1=\int_\S x_2 \cdot \left( \int_\S (\h(x_1,t)- (\h(x_1,t) \cdot x_1) x_1) \d\rho_t(x_1) \right) \d\rho_t(x_2).
\end{align*}
By \eqref{C-1} and \eqref{eqn:vC} we get $\lim_{t\rightarrow\infty}\mathcal{I}_1=0$, and by a similar argument $\lim_{t\rightarrow\infty}\mathcal{I}_2=0$. For $\mathcal{I}_3$ we estimate (note that $|1-x_1 \cdot x_2| \leq 2$ and $|\h(x_1)|- \h(x_1)\cdot x_1 \geq 0$):
\be
	\mathcal{I}_3 \leq 2\int_\S\int_\S (|\h(x_1,t)|- \h(x_1,t) \cdot x_1) \d\rho_t(x_1)\d\rho_t(x_2) =2\int_\S (|\h(x_1,t)|- \h(x_1,t) \cdot x_1)  \d \rho_t(x_1) .
\ee
Since $\mathcal{I}_3 \geq 0$ and by Lemma \ref{L-4} the right-hand side of the inequality above approaches $0$ at infinity, we infer $\lim_{t\rightarrow\infty}\mathcal{I}_3=0$. A similar argument yields $\lim_{t\to\infty}
\mathcal{I}_4 = 0$. Finally, by passing to the limit $t \to \infty$ in \eqref{eqn:I1-4} we obtain:
\begin{align*}
0\leq \lim_{t\rightarrow\infty}2\c\cos r \int_\S\int_\S(1-x_1 \cdot x_2)\d\rho(x_1)\d\rho(x_2) \leq 0,
\end{align*}
which leads to the desired result.
\end{proof}

We can finally prove the main result of this section:
\begin{thm}[Asymptotic consensus in the continuum model]\label{thm:consensus-cont}
	Let $K$ satisfy \ref{hyp:K} and $r<\pi/2-\e$. Suppose that $g'\geq0$ and $g'$ is continuously differentiable on $[0,4r^2]$. Also assume that $G$ satisfies \eqref{eqn:G-hyp1} and \eqref{eqn:G-hyp2}. Let $\rho_0\in\P(\S)$ be such that $\supp(\rho_0)\subset \overline D_r$, and consider $\rho\in\Cont([0,\infty);\P(\S))$ the global weak solution to \eqref{eqn:model} starting from $\rho_0$ from Proposition \ref{prop:inv-cont}. Then there exists $p\in \overline D_r$ such that $W_1(\rho_t,\delta_p) \to 0$ as $t\to\infty$.
\end{thm}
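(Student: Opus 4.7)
The plan is to show that the Euclidean barycenter $b(t) := \int_\S x\, d\rho_t(x) \in \R^{\dim+1}$ converges as $t \to \infty$ to a point $b_\infty\in\overline{D}_r$, and then set $p=b_\infty$. First, note that since $\overline{D}_r$ is compact, $(\mathcal{P}(\overline{D}_r),W_1)$ is compact; using
\[
	\int_\S\!\int_\S (1 - x\cdot y)\, d\rho_t(x)\,d\rho_t(y) = 1 - |b(t)|^2 \longrightarrow 0
\]
from Lemma \ref{lem:consensus-cont}, every $W_1$-subsequential limit of $(\rho_{t_k})$ as $t_k\to\infty$ is a Dirac mass $\delta_p$ with $p\in\overline{D}_r$, and $b(t_k)\to p$ along that subsequence. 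Hence it suffices to show that the whole curve $b(t)$ converges.

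To compute $b'(t)$, I would use $\rho_t = \Psi_\V^t\#\rho_0$, the decomposition \eqref{eqn:vC}, and the symmetry $G(x,y)=G(y,x)$ to symmetrise. This gives
\[
	b'(t) = \int_\S \V(x,t)\, d\rho_t(x) = -\tfrac{1}{2}\int_\S\!\int_\S G(x,y)(1-x\cdot y)(x+y)\, d\rho_t(x)\,d\rho_t(y),
\]
which, combined with $|x+y|\le 2$ and the upper bound of $G$ on $\overline{D}_r\times\overline{D}_r$, yields $|b'(t)|\le C_1(1-|b(t)|^2)$ for a constant $C_1=C_1(\e,r)>0$. The key step is to obtain integrability in time of $1-|b(\cdot)|^2$, which Lemma \ref{lem:consensus-cont} does not provide directly. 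For this, I introduce the scalar Lyapunov function $\phi(t) := 1 - \North\cdot b(t)$, where $\North$ is the North pole. Since $\supp(\rho_t)\subset\overline{D}_r$ with $r<\pi/2-\e$, we have $\North\cdot x\ge\cos r>0$ on the support, so $\phi(t)\in[0,1-\cos r]$, and a direct computation using \eqref{eqn:G-hyp1} yields
\[
	\phi'(t) = -\North\cdot b'(t) = \tfrac{1}{2}\int_\S\!\int_\S G(x,y)(1-x\cdot y)\,\North\cdot(x+y)\, d\rho_t(x)\,d\rho_t(y) \;\ge\; \h\cos r\bigl(1-|b(t)|^2\bigr)\ge 0.
\]
Thus $\phi$ is nondecreasing and bounded, so $\phi(t)$ converges, and integrating yields $\int_0^\infty(1-|b(t)|^2)\,dt \le(\phi(\infty)-\phi(0))/(\h\cos r)<\infty$.

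Combining $(1-|b(\cdot)|^2)\in L^1(0,\infty)$ with $|b'(t)|\le C_1(1-|b(t)|^2)$ gives $b'\in L^1(0,\infty;\R^{\dim+1})$, so $b(t)\to b_\infty$ for some $b_\infty$. The constraint $1-|b(t)|^2\to 0$ forces $|b_\infty|=1$, and since $b_\infty$ lies in the closed convex hull of $\overline{D}_r$, a Cauchy--Schwarz extreme-point argument yields $b_\infty\in\Sph^\dim\cap\overline{\mathrm{conv}}(\overline{D}_r)=\overline{D}_r$. Finally, using $d(x,y)\le\tfrac{\pi}{2}|x-y|$ on $\Sph^\dim$ and Cauchy--Schwarz,
\[
	W_1(\rho_t,\delta_{b_\infty}) \le \int_\S d(x,b_\infty)\, d\rho_t(x) \le \tfrac{\pi}{2}\sqrt{\int_\S |x-b_\infty|^2\, d\rho_t(x)} = \tfrac{\pi}{2}\sqrt{2\bigl(1-b(t)\cdot b_\infty\bigr)} \xrightarrow{t\to\infty} 0,
\]
proving the claim with $p=b_\infty$.

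The main obstacle is upgrading the pointwise vanishing $1-|b(t)|^2\to 0$ from Lemma \ref{lem:consensus-cont} into $L^1$-integrability in time, since only the latter allows one to conclude convergence of $b(t)$ and not merely of subsequences. The Lyapunov function $\phi$ is the device that accomplishes this: it works precisely because $\supp(\rho_t)$ is confined to an open hemisphere, so that the fixed direction $\North$ makes a strictly positive inner product with every point of the support, forcing $\phi'(t)$ to have a definite sign and to dominate a positive multiple of $1-|b(t)|^2$.
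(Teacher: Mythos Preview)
Your proof is correct and takes a genuinely different route from the paper. The paper argues by compactness: Prokhorov's theorem gives a narrow (subsequential) limit $\rho_\infty$, and then Lemma~\ref{lem:consensus-cont} forces $\rho_\infty=\delta_p$. As written, the paper's proof does not explain why all subsequential limits give the \emph{same} point $p$; your barycenter argument actually closes this gap, since once $b(t)\to b_\infty$ any subsequential Dirac limit must be $\delta_{b_\infty}$. Your key new idea is the Lyapunov function $\phi(t)=1-\North\cdot b(t)$: monotonicity of $\phi$ upgrades the mere vanishing of $1-|b(t)|^2$ (Lemma~\ref{lem:consensus-cont}) to $L^1$-integrability, which is what yields Cauchy-ness of $b(\cdot)$. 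Two minor remarks. First, the sign of $\phi'$ depends on the sign convention in \eqref{eqn:vC}; with the physically correct sign for attractive interactions one gets $\phi'\leq 0$ rather than $\phi'\geq 0$, but your argument is indifferent to this since either way $\phi$ is monotone and bounded and $|\phi'(t)|\geq \c\cos r\,(1-|b(t)|^2)$. Second, you invoke Lemma~\ref{lem:consensus-cont} only to obtain $1-|b(t)|^2\to 0$, but this already follows from $b(t)\to b_\infty$ together with $\int_0^\infty(1-|b(t)|^2)\,dt<\infty$; so in fact your argument never needs the monotonicity hypothesis \eqref{eqn:G-hyp2} and proves consensus under \eqref{eqn:G-hyp1} alone, which is strictly stronger than the paper's statement.
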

\begin{proof}
	By Proposition \ref{prop:inv-cont}, for all $t\in[0,\infty)$ we have that $\supp(\rho_t)$ is a subset of $\overline D_r$, which is compact, so that Prokhorov's theorem ensures the existence of $\rho_\infty\in\P(\S)$ such that $\supp(\rho_\infty) \subset \overline D_r$ and $(\rho_t)_{t\geq0}$ converges narrowly to $\rho_\infty$. By compactness of the sphere we further get $W_1(\rho_t,\rho_\infty) \to 0$ as $t\to\infty$.
	
	Let $\phi:\S\times\S \to\R$ denote the map $(x_1,x_2) \mapsto 1-x_1\cdot x_2$, which we observe is continuous and bounded. We also note that the family $(\rho_t \otimes \rho_t)_{t\geq0}$ of product measures narrowly converges to $\rho_\infty\otimes\rho_\infty$. By Lemma \ref{lem:consensus-cont} we then have
	\bes
		0 = \lim_{t\to\infty} \int_\S \int_\S \phi(x_1,x_2) \d\rho_t(x_1)\d\rho_t(x_2) = \int_\S \int_\S \phi(x_1,x_2) \d\rho_\infty(x_1)\d\rho_\infty(x_2).
	\ees
	Since $\phi\geq0$ we get that $\phi(x_1,x_2) = 0$ for $\rho_\infty\otimes\rho_\infty$-almost all $(x_1,x_2)\in\S\times\S$. Suppose, by contradiction, that there exist $x_1,x_2\in\supp(\rho_\infty)$ with $x_1\neq x_2$. Then, there exists $\delta>0$ so that $B_\delta(x_1) \cap B_\delta(x_2) = \emptyset$ and $(\rho_\infty\otimes\rho_\infty)(B_\delta(x_1)\times B_\delta(x_2))>0$. Furthermore, there exists $(x_1',x_2') \in B_\delta(x_1)\times B_\delta(x_2)$ such that $\phi(x_1',x_2') =0$, that is, $x_1'\cdot x_2' = 1$. Since $x_1'$ and $x_2'$ lie on the sphere, this implies that $x_1'=x_2'$, which contradicts $B_\delta(x_1) \cap B_\delta(x_2) = \emptyset$. We infer that $\supp(\rho_\infty)$ is a singleton, which concludes the proof.
\end{proof}


\subsection{Asymptotic consensus in the discrete model}
\label{subsect:consensus-disc}
We turn now to the asymptotic behaviour of solutions in the discrete model with purely attractive interaction potentials. First we want to note that the theory developed in Section \ref{subsect:consensus-cont} (e.g., Theorem \ref{thm:consensus-cont}) considers weak measure-valued solutions, and in particular it applies to the discrete case as well. Nevertheless,  we prove below a consensus result for the discrete model that assumes weaker assumptions on the interaction potential.

Fix an integer $\N\geq2$ and, without loss of generality, consider $\N$ particles of identical masses $1/\N$ that evolve on $\S$ according to the discrete model \eqref{eqn:model-disc}, which then reads:
\be\label{eqn:model-discrete}
	\begin{cases} x_i'(t) = - \displaystyle \frac1\N  \sum_{j=1}^{\N} \nabla_{\Sph^\dim}K_{x_j(t)}(x_i(t)),\\ x_i(0) = x_i^0. \end{cases}
\ee
In analogy with the continuum model, we remark that the discrete model \eqref{eqn:model-discrete} is a gradient flow with respect to the discrete energy $E_n\: \S^\N \to \R$ given by:
\begin{equation}
\label{eqn:energy-discrete}
	E_n(x_1,\dots,x_\N)=\frac{1}{\N^2}\sum_{1\leq i < j\leq \N}K(x_i, x_j), \qquad\mbox{for all $(x_1,\dots,x_\N)\in\S^\N$}.
\end{equation}
Indeed, one can reformulate the first line in \eqref{eqn:model-discrete} as
\begin{equation}
\label{eqn:grad-flow}
	x_i'(t)=-\N\nabla_{\Sph^\dim}^iE_n(x_1(t),\dots,x_\N(t)),
\end{equation}
where $\nabla_{\Sph^\dim}^i$ stands for the manifold gradient with respect to the $i$th variable. This energy will play an important role in the considerations below.

We present a technical lemma first. 
\begin{lem}
\label{lem:technical}
	Let $x_1, \dots, x_{\N}\in D_{\pi/4}$ be such that 
\[
 \Delta:=\max_{1\leq i,j\leq n}d(x_i,x_j) >0.
\]
By reindexing if necessary, assume that $d(x_1,x_2) = \Delta$. Then,
\[
\log_{x_1}x_2 \cdot  \log_{x_1}x_j \geq 0, \qquad \mbox{for all $j\in\{1,\dots,\N\}$}.
\]
\end{lem}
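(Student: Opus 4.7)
My plan is to work in the ambient space $\R^{\dim+1}$ and reduce the inner product $\log_{x_1}x_2 \cdot \log_{x_1}x_j$ to a simple trigonometric expression in the pairwise angles $\theta_{ij}:=d(x_i,x_j)$, then exploit the fact that $\theta_{12}$ is the maximum pairwise distance together with the smallness guaranteed by membership in $D_{\pi/4}$.

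First, I would dispose of the trivial case $x_j=x_1$ (for which $\log_{x_1}x_j=0$) and assume $x_j\neq x_1$. Using the explicit formula \eqref{eqn:log-sphere} and expanding the dot product in $\R^{\dim+1}$ with the identities $x_1\cdot x_1=1$, $x_i\cdot x_k=\cos\theta_{ik}$, one obtains after cancellation
\begin{equation*}
	\log_{x_1}x_2 \cdot \log_{x_1}x_j \;=\; \frac{\theta_{12}\,\theta_{1j}}{\sin\theta_{12}\,\sin\theta_{1j}}\bigl(\cos\theta_{2j}-\cos\theta_{12}\cos\theta_{1j}\bigr).
\end{equation*}
Since $\theta_{12},\theta_{1j}\in(0,\pi)$ the prefactor is strictly positive, so the claim reduces to showing
\begin{equation*}
	\cos\theta_{2j}\;\geq\;\cos\theta_{12}\cos\theta_{1j}.
\end{equation*}

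Next, the hypothesis $x_i\in D_{\pi/4}$ gives $d(x_i,\North)<\pi/4$ for each $i$, so by the triangle inequality every pairwise distance satisfies $\theta_{ik}<\pi/2$; in particular all the cosines involved are strictly positive and cosine is monotonically decreasing on the relevant range $[0,\pi/2)$. Combined with the maximality $\theta_{2j}\leq \Delta=\theta_{12}$ this yields $\cos\theta_{2j}\geq\cos\theta_{12}$, while $\cos\theta_{1j}\leq 1$ together with $\cos\theta_{12}\geq 0$ gives $\cos\theta_{12}\cos\theta_{1j}\leq\cos\theta_{12}$. Chaining the two inequalities produces the required bound $\cos\theta_{2j}\geq\cos\theta_{12}\geq\cos\theta_{12}\cos\theta_{1j}$, and the lemma follows.

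The argument is essentially elementary once the ambient-space reduction is carried out, and I do not anticipate a genuine obstacle. The only place that requires care is ensuring that all angles lie in $[0,\pi/2)$ so that cosines are nonnegative and monotonic in the right direction; this is where the restriction to the disk $D_{\pi/4}$ (rather than a larger hemisphere) is essential, since otherwise $\cos\theta_{12}$ could be negative and the final chaining would fail.
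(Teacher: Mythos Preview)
Your proof is correct and takes a genuinely different route from the paper's. The paper argues intrinsically: it observes that all $x_j$ lie in the geodesically convex disk $\overline D_\Delta(x_2)$, parametrizes the minimizing geodesic from $x_1$ to $x_j$, and notes that $t\mapsto d(x(t),x_2)^2$ must be nonincreasing at $t=0$ since $x_1$ sits on the boundary of that disk while the geodesic stays inside; differentiating at $t=0$ then gives exactly $-2\log_{x_1}x_2\cdot\log_{x_1}x_j\leq 0$. You instead work extrinsically in $\R^{\dim+1}$, expand the dot product using the explicit logarithm formula to obtain the spherical-law-of-cosines expression $\cos\theta_{2j}-\cos\theta_{12}\cos\theta_{1j}$, and finish with the elementary chain $\cos\theta_{2j}\geq\cos\theta_{12}\geq\cos\theta_{12}\cos\theta_{1j}$ using maximality of $\theta_{12}$ and nonnegativity of cosines in $D_{\pi/4}$. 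Your argument is more computational and tied to the sphere's embedding, but it is shorter and entirely elementary; the paper's argument is coordinate-free and would transfer to other manifolds where closed geodesic balls of the relevant radius are convex. Both make essential use of the $D_{\pi/4}$ hypothesis---you to guarantee $\cos\theta_{12}\geq 0$, the paper to guarantee geodesic convexity of $\overline D_\Delta(x_2)$.
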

\begin{proof}
Consider the closed disk $\overline{D}_\Delta(x_2)$ centred at $x_2$ with radius $\Delta$. Then, by definition of $\Delta$ and the fact that $d(x_1,x_2) = \Delta$, we have $x_j \in \overline{D}_\Delta(x_2)$ for all $j\in\{1,\dots,\N\}$. 

If $\N=2$, then the result is trivial; suppose that $\N\geq3$. For $j \in\{3,\dots,\N\}$ fixed, consider the minimizing geodesic between $x_1$ and $x_j$. Parametrize this geodesic by $x(t)$, with $x(0)=x_1$ and $x'(0)=\log_{x_1}x_j$; in particular, $x(1) = x_j$ and $x(t)\neq x_2$ for all $t\in[0,1]$. Then, by the chain rule and \eqref{eqn:gradd} we find
\be\label{eqn:dy2}
	\frac{\der}{\der t} d(x(t), x_2)^2 = \grad_{\Sph^\dim}d_{x_2}^2(x(t)) \cdot x'(t) =-2 \log_{x(t)}x_2 \cdot x'(t).
\ee
Note that by the geodesic convexity of $\overline{D}_\Delta(x_2)$, we have $x(t) \in \overline{D}_\Delta(x_2)$ and $d(x(t),x_2) \leq d(x_1,x_2)$ for all $t\in[0,1]$. Hence the map $t\mapsto d(x(t),x_2)^2$ is nonincreasing at $t=0$, and by setting $t=0$ in \eqref{eqn:dy2} we find
\begin{equation*}
0 \geq \left.\frac{\der}{\der t}\right|_{t=0} d(x(t), x_2)^2 =-2 \log_{x_1}x_2 \cdot \log_{x_1}x_j,
\end{equation*}
which yields the desired conclusion.
\end{proof}

The following theorem shows the asymptotic convergence towards a consensus/synchronized state for the intrinsic model on sphere.
\begin{thm}[Asymptotic consensus in discrete model]
\label{thm:consensus-disc}
	Let $K$ satisfy \ref{hyp:K} and $r< \pi/4-\e$. Assume that $g'$ has continuous derivative on $[0,4r^2]$ and satisfies $g'(s) \geq \c s^\alpha$ for all $s \in [0,4r^2]$, for some $\c>0$ and $\alpha \geq 0$. Let furthermore  $(x_i^0)_{i=1}^n \subset \overline D_r$. Then the unique global solution $(x_i)_{i=1}^n$ to \eqref{eqn:model-discrete} from Proposition \ref{prop:inv-disc} is such that $d(x_i(t), x_j(t))\to 0$ as $t \to \infty$ for every $i,j\in\{1,\dots \N\}$.
\end{thm}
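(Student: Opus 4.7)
The plan is to work directly with the squared diameter $\Phi(t) := \max_{1 \leq i, j \leq \N} d(x_i(t), x_j(t))^2$ and derive a closed scalar differential inequality that forces $\Phi(t) \to 0$. Proposition \ref{prop:inv-disc} delivers the unique global solution, keeping every particle inside $\overline D_r$ for all times. Since $r < \pi/4 - \e$, in particular $\overline D_r \subset D_{\pi/4}$, so Lemma \ref{lem:technical} can be applied at every time $t$; moreover, every squared pairwise distance stays in $[0, 4r^2]$, where the growth hypothesis $g'(s) \geq \c s^\alpha$ is valid.

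Next I would compute the time derivative of $d(x_i, x_j)^2$ along the flow. Combining $\nabla_{\Sph^\dim} d_y^2(x) = -2 \log_x y$ (from \eqref{eqn:gradd}) with \eqref{eqn:gradK-gen} and \eqref{eqn:model-discrete}, one obtains
\[
\frac{\der}{\der t} d(x_i, x_j)^2 = -\frac{4}{\N} \sum_{k=1}^{\N} g'(d(x_i, x_k)^2) \log_{x_i} x_j \cdot \log_{x_i} x_k - \frac{4}{\N} \sum_{k=1}^{\N} g'(d(x_j, x_k)^2) \log_{x_j} x_i \cdot \log_{x_j} x_k.
\]
Fix any diameter-achieving pair $(i^*, j^*)$, i.e. one with $d(x_{i^*}(t), x_{j^*}(t))^2 = \Phi(t)$. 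Lemma \ref{lem:technical} applied at $x_{i^*}$ with the maximizing neighbour $x_{j^*}$, and then symmetrically at $x_{j^*}$, yields $\log_{x_{i^*}} x_{j^*} \cdot \log_{x_{i^*}} x_k \geq 0$ and $\log_{x_{j^*}} x_{i^*} \cdot \log_{x_{j^*}} x_k \geq 0$ for every $k$. Since $g' \geq 0$, every summand above is nonpositive; retaining only the diagonal terms $k = j^*$ in the first sum and $k = i^*$ in the second (each contributing $-\frac{4}{\N} g'(\Phi(t))\,\Phi(t)$, using $|\log_{x_{i^*}} x_{j^*}|^2 = \Phi(t)$) gives
\[
\frac{\der}{\der t} d(x_{i^*}, x_{j^*})^2 \leq -\frac{8}{\N} g'(\Phi(t))\, \Phi(t) \leq -\frac{8\c}{\N}\, \Phi(t)^{\alpha + 1},
\]
where the second step uses the growth bound on $g'$.

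To lift this into a differential inequality on $\Phi$ itself, I would invoke Danskin's envelope identity for the upper Dini derivative of a finite maximum of $C^1$ functions, which yields $D^+\Phi(t) = \max_{(i^*,j^*) \in J(t)} \tfrac{\der}{\der t} d(x_{i^*}, x_{j^*})^2$, where $J(t)$ is the active set of maximizing pairs. The estimate above then delivers $D^+ \Phi(t) \leq -\tfrac{8\c}{\N} \Phi(t)^{\alpha+1}$ for all $t \geq 0$. In particular $\Phi$ is nonincreasing and converges to some $\Phi_\infty \geq 0$; if $\Phi_\infty > 0$ then eventually $D^+\Phi(t) \leq -\tfrac{8\c}{\N} \Phi_\infty^{\alpha+1} < 0$, contradicting convergence. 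Hence $\Phi_\infty = 0$, which is exactly the claim $d(x_i(t), x_j(t)) \to 0$ for every pair $(i,j)$. The main technical subtlety I anticipate is the non-differentiability of the maximum $\Phi$, but this is routinely dispatched by the Danskin identity; the rest of the proof is a direct computation together with the cone-type inequality provided by Lemma \ref{lem:technical}.
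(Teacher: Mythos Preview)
Your argument is correct and takes a genuinely different route from the paper. The paper proceeds in two steps: first it uses the discrete energy $E_n$ together with Barbalat's lemma (this is where the assumption that $g'$ has a continuous derivative on $[0,4r^2]$ enters, via the bound on $E_n''$) to conclude $x_i'(t)\to 0$ for every $i$; then, at the diameter-achieving pair, it takes the inner product of the equation for $x_1'$ with $\log_{x_1}x_2$, applies Lemma~\ref{lem:technical} from one side only, and bounds via Cauchy--Schwarz to get $\tfrac{2\c}{\N} d(x_1,x_2)^{1+2\alpha}\leq |x_1'|$, from which $\Delta(t)\to 0$ follows.

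By contrast, you differentiate $d(x_{i^*},x_{j^*})^2$ using both endpoints, apply Lemma~\ref{lem:technical} symmetrically at $x_{i^*}$ and at $x_{j^*}$, and obtain a closed scalar inequality $D^+\Phi \leq -\tfrac{8\c}{\N}\Phi^{\alpha+1}$ directly. This is more elementary: it bypasses Barbalat's lemma and the second-derivative bound on the energy entirely (so in fact your proof does not use the $C^1$ hypothesis on $g'$ at all), and it yields an explicit decay rate (exponential for $\alpha=0$, algebraic for $\alpha>0$) rather than mere convergence. The paper's approach, on the other hand, produces the intermediate conclusion $x_i'(t)\to 0$, which may be of independent interest. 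Your handling of the non-differentiability of $\Phi$ via the Danskin envelope identity for a finite maximum of $C^1$ functions is standard and correct.
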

\begin{proof}
We proceed in two steps.
\smallskip

{\em Step 1.} By abuse of notation, denote $t\mapsto E_n(t)$ the map such that $E_n(t) = E_n(x_1(t),\dots,x_n(t))$ for all $t\in[0,\infty)$, where we recall that the discrete energy $E_n$ is given by \eqref{eqn:energy-discrete}. Writing $\rho_0 = \tfrac1\N\sum_{i=1}^n \delta_{x_i^0}$, we have $\supp(\rho_0) \subset \overline D_r$, and Proposition \ref{prop:inv-cont} gives us the existence of a unique global weak solution $\rho$ to \eqref{eqn:model} starting from $\rho_0$. From Lemma \ref{lem:atomic-sol}, the unique global weak solution $\rho^n$ starting from $\rho_0$ from Proposition \ref{prop:inv-cont} reads:
\bes
	\rho_t^n = \frac1\N \sum_{i=1}^n \delta_{x_i(t)},  \qquad \mbox{for all $t\in[0,\infty)$}.
\ees
Noting that the discrete energy $E_n(t) = E[\rho_t^n]$, where $E$ is the continuum energy \eqref{eqn:energy-cont}, we obtain from Lemmas \ref{lem:E-lim} and \ref{lem:d2E_dt2} that $E_n(t) \to E_\infty$ as $t\to\infty$ for some $E_\infty\in\R$ and the map $t\mapsto E_n''(t)$ is bounded on $[0,\infty)$.

By applying Barbalat's lemma to $t\mapsto E_n(t)$, we then get
\[
	E_n'(t) \rightarrow 0, \qquad \text{ as  $t \to \infty$}.
\]
Using \eqref{eqn:grad-flow}, we compute, for all $t\in[0,\infty)$,
\begin{equation}
	E_n'(t)=\sum_{i=1}^{\N} \nabla_{\Sph^\dim}^i E_n(x_1(t),\dots,x_\N(t)) \cdot x'_i(t) = -\frac{1}{\N}\sum_{i=1}^{\N} |x'_i(t)|^2\leq0,
\end{equation}
which then implies that
\begin{equation}
\label{eqn:equil}
	x_i'(t)\to 0,  \qquad \text{ as $t \to \infty$ for all $i\in\{1,\dots,\N\}$}.
\end{equation}
\smallskip

{\em Step 2.} Recall from Proposition \ref{prop:inv-disc} that $x_i(t)\in \overline D_r$ for all $t\in[0,\infty)$ and $i\in\{1,\dots,\N\}$; in particular, any particles stay within distance $2r$ at all times. Let $\Delta\: [0,\infty) \to [0,\infty)$ be given by
\[
	\Delta(t) = \max_{1\leq i,j\leq n}d(x_i(t),x_j(t)) \quad \mbox{for all $t\in[0,\infty)$}.
\]
We want to show that $\Delta(t) \to 0$ as $t \to \infty$, which will conclude the proof. We will use Lemma \ref{lem:technical}. 
Reindexing particles at all times if necessary, assume that 
\[
d(x_1(t),x_2(t)) = \Delta(t), \qquad \mbox{for all $t\in[0,\infty)$}.
\]
Taking the inner product with $\log_{x_1(t)}x_2(t)$ on both sides of \eqref{eqn:model-discrete} for particle $i=1$, we get
\begin{align}
\label{est:dp}
	x_1' \cdot \log_{x_1}x_2 &= \frac{1}{\N}\sum_{j=1}^{\N} 2g'(d(x_1, x_j)^2)\log_{x_1}x_j \cdot \log_{x_1}x_2 \nonumber \\
& \geq \frac{2}{\N} g'(d(x_1, x_2)^2) | \log_{x_1}x_2 |^2, 
\end{align}
where we dropped the dependence on $t$ for simplicity, and where for the inequality on the second line we used Lemma \ref{lem:technical} to bound from below a sum of nonnegative terms by the second term.

Using the Cauchy--Schwarz inequality $|x_1' \cdot \log_{x_1}x_2 | \leq |x_1'| |\log_{x_1}x_2|$ and the fact that $|\log_{x_1}x_2| = d(x_1,x_2)$, from \eqref{est:dp} we find
\[
\frac{2}{\N} g'(d(x_1, x_2)^2) d(x_1,x_2) \leq |x_1'|.
\]
Finally, using the bound assumption on $g'$ we get, for all $t\in[0,\infty)$,
\[
\frac{2}{\N} \c d(x_1,x_2)^{1+2\alpha} \leq |x_1'|,
\]
where $1+2\alpha >0$. And since by \eqref{eqn:equil}, $x_1'(t)$ approaches $0$ as $t \to \infty$, so does $d(x_1(t),x_2(t))$. Hence
\[
	\Delta(t) \to 0,  \qquad \text{ as }  t \to \infty. \qedhere
\]
\end{proof}

\paragraph{Examples.} We discuss here some examples of interaction potentials that satisfy the assumptions in Theorem \ref{thm:consensus-disc}. 
\begin{enumerate}
\item {\em Power-law potentials.} The quadratic potential
\[
K(x, y)=d(x, y)^{2}, \quad \text { for } g(s)=s,
\]
satisfies $g'(s) \geq \c s^\alpha$ for all $s\in[0,\infty)$ with $\c=1$ and $\alpha = 0$, and $g$ is furthermore of class $C^2$. More generally, for $q \geq 2$,
\[
K(x, y)=d(x, y)^{2q}, \quad \text { for } g(s)=s^q,
\]
satisfies $g'(s) \geq \c s^\alpha$ for all $s\in[0,\infty)$ with $\c=q$ and $\alpha = q-1$, and is of class $C^2$.

Interaction potentials in power-law form have been one of the main types of potentials investigated in the aggregation literature  \cite{Balague_etalARMA, BaCaLaRa2013, ChFeTo2015, FeHuKo11, FeHu13}. Despite their simplicity, it was shown that they can capture a wide variety of  ``swarm" behaviours, such as aggregations on disks, annuli, rings, delta concentrations, and others, for both the model with extrinsic interactions\cite{KoSuUmBe2011}, as well as for the intrinsic model investigated in this paper \cite{FeZh2019}.

\item {\em Potential in Lohe sphere model.} The potential
\[
K(x, y)=2\sin^2\left(\frac{{d(x, y)}}{2}\right), \qquad \text{ for } g(s)=2 \sin^2\left(\frac{\sqrt{s}}{2}\right),
\]
corresponds to the Lohe sphere model studied in various recent papers \cite{HaKoRy2018,HaKiLeNo2019}. Indeed, the discrete Lohe model on the unit sphere reads:
\begin{align*}
x_i'=\Omega_i x_i+\frac{\kappa}{\N}\sum_{k=1}^\N(x_k- (x_k \cdot x_i)  x_i), \qquad i\in\{1,\dots,\N\},
\end{align*}
where $\Omega_i$ is a natural frequency matrix and $\kappa$ is a coupling strength. As done previously, all particles $x_i(t)\in \Sph^{\dim}$ for all $t\in[0,\infty)$ are considered as vectors in $\R^{\dim+1}$. Given that on the unit sphere, $d(x, y)=\arccos (x \cdot y)$ for all $x,y\in\Sph^\dim$, from the identity $\cos\theta=1-2\sin^2(\theta/2)$, one can write the potential as:
\[
K(x, y)=1-\cos d(x, y)=1- x \cdot y=\tfrac{1}{2}|x-y|^2.
\]
Therefore, $K$ can also be regarded as a quadratic potential with respect to the Euclidean distance in the ambient space $\R^{\dim+1}$. The Euclidean gradient of $K$ is given by
\[
-\nabla K_y(x)=y, \qquad \mbox{for all $x,y\in\S$},
\]
and projecting it onto the tangent plane to the sphere one gets the manifold gradient of $K$:
\[
-\nabla_{\Sph^\dim}K_y(x)=y- (x \cdot y) x, \qquad \mbox{for all $x,y\in\S$},
\]
which is the coupling term in the Lohe sphere model. Compute
\[
g'(s)=\frac{1}{2} \frac{\sin\frac{\sqrt{s}}{2}}{\frac{\sqrt{s}}{2}}\cos\frac{\sqrt{s}}{2}, \qquad \mbox{for all $s\in[0,\infty)$}.
\]
Take an initial particle configuration of particles in a geodesic disk $D_r$, with $r<\pi/2-\e$. The function $g'$ verifies
\[
g'(s)\geq \frac{\cos r}{2}, \qquad \mbox{for all $s \in [0,4r^2]$},
\]
so it satisfies the bound condition of Theorem \ref{thm:consensus-disc} with $\c= \cos(r) /2$ and $\alpha =0$. Finally, it can also be checked that $g$ is of class $C^2$.
\end{enumerate}

\begin{rem}
Among the examples above, we note that only the quadratic potential and the Lohe sphere potential satisfy the assumptions of Theorem \ref{thm:consensus-cont}, our continuum result. Indeed, for the quadratic potential the function $g$ satisfies the sufficient conditions given in Remark \ref{rem:gG}, while for the Lohe model, a direct calculation shows $G(x,y)=1$ for all $x,y\in\S$. Higher-order power-law potentials, however, do not satisfy \eqref{eqn:G-hyp1} as $G$ is not bounded below by a positive constant in this case. This illustrates the fact that our discrete result (Theorem \ref{thm:consensus-disc}) holds for a wider class of potentials than our continuum counterpart (Theorem \ref{thm:consensus-cont}).
\end{rem}


\section{Intrinsic aggregation model and consensus on other manifolds}
\label{sect:other}
In this section we consider the intrinsic aggregation model and its asymptotic behaviour on other manifolds, in particular on a hypercylinder.

\subsection{Intrinsic aggregation model on cylinder}
\label{subsect:cylinder}
We show that results similar to those in Section \ref{sect:sphere} can be obtained for a cylinder in $\R^3$, or, more generally, for a hypercylinder in arbitrary dimension. Here, by a hypercylinder in $\R^{\dim+1}$ we mean the product manifold of a circle (endowed with the induced metric from $\R^2$) with $\R^{\dim-1}$, canonically embedded in $\R^{\dim+1}$. Similarly as in the case of the sphere, this embedding in $\R^{k+1}$ allows us to treat points and tangent vectors of $\Cyl^\dim$ as vectors in $\R^{k+1}$. For simplicity, we present the calculations for the cylinder in $\R^3$; extending the considerations to a hypercylinder would be immediate.

Consider the wrapping parametrization $(\cos x, \sin x, z)$ with $(x,z) \in [0, 2\pi) \times \mathbb{R}$ of a cylinder in $\R^3$.  Similarly to the sphere, we restrict our study to a subset of the cylinder where no two points are in the cut locus of each other. Note that for a point on the cylinder, its cut locus consists in the line on the cylinder opposite to it. Specifically, consider the subset $\C$ of the cylinder that corresponds, under the wrapping map, to the band $(0, \pi-\e) \times \mathbb{R}$, where $0<\e<\pi$ is fixed, that is,
\[
\C = \{(\cos x, \sin x, z) \mid  x \in (0, \pi-\e),\, z\in \mathbb{R} \}.
\]
This subset is contained in an open half-cylinder and hence, the cut locus of each point in it lies outside the set; see \eqref{eqn:setS} to compare with the sphere case. 

The wrapping parametrization is an isometry between the $xz$-plane and the cylinder. The metric is the identity matrix, and so is its inverse (as is for the Euclidean plane). Take two generic points $(x,z)$ and $(\barx,\barz)$ on the band $(0, \pi-\e) \times \mathbb{R}$, corresponding to points $P$ and $Q$ on $\C$.  The distance on the cylinder between $P$ and $Q$ is the distance between the two points on the plane:
\[
	d(P,Q) = \left( (x-\barx)^2 + (z-\barz)^2\right)^{1/2}.
\]

A well-posedness result for the intrinsic model \eqref{eqn:model} on the cylinder would follow as in Section \ref{sect:sphere}, provided analogues to Lemmas \ref{lem:dist-flow-maps}, \ref{lem:dist-flow-maps-K} and \ref{lem:grad-lip-2} are established for the cylinder (Lemma \ref{lem:Lipschitz-initial} following similarly). We sketch briefly the arguments leading to such analogous lemmas. Using the formula for surface gradient in coordinates one can compute, for points $P=(x,z)$ and $Q=(\bar x,\bar z)$ in the cylinder:
\begin{align*}
	\nabla_{\Cyl^2} d_Q^2(P) &= \frac{\partial}{\partial x} d^2(P,Q) \, \bee_x +  \frac{\partial}{\partial z} d^2(P,Q) \, \bee_z \\
&= 2 (x - \barx) \bee_x + 2 (z-\barz) \bee_z,
\end{align*}
where $\bee_x$ and $\bee_z$ are the tangent vectors along coordinate lines at $P$, given by:
\[
\bee_x = (-\sin x, \cos x, 0), \quad \text{ and } \quad \bee_z= (0,0,1).
\]
Also, the logarithm map in coordinates is given by:
\[
\log_P Q = (\barx-x) \bee_x + (\barz-z) \bee_z.
\]

{\em Analogue of Lemma \ref{lem:dist-flow-maps}.} Consider two time-dependent vector fields $X$ and $Y$ on $\C$ and let $\Sigma\subset \C$. Let moreover $\Psi^t_X$ and $\Psi^t_Y$ be the flow maps defined on $\Sigma\times[0,\tau)$, for some $\tau>0$, generated by $(X,\Sigma)$ and $(Y,\Sigma)$. We also assume that $X$ bounded on $\C\times[0,\tau)$ and Lipschitz continuous with respect to its first variable on $\C\times[0,\tau)$ (i.e., it satisfies \eqref{eq:Lipschitz-X} on $\C\times\C\times[0,\tau)$ for some $L_X>0$).

Fix $p\in \Sigma$ and $t\in[0,\tau)$; we will be using $(x,z)$ as coordinates for $P=\PsiX(p)$ and $(\barx,\barz)$ for $Q=\PsiY(p)$. Suppose $P\neq Q$ and compute:
\begin{align*}
	\frac{\der}{\der t} d(P,Q) &= \nabla_{\Cyl^2} d_Q(P) \cdot X_t(P) + \nabla_{\Cyl^2} d_P(Q) \cdot Y_t(Q) \\
	&  = \frac{1}{d(P,Q)} \Bigl( \underbrace{((x - \barx) \bee_x + (z-\barz) \bee_z)}_{:=A} \cdot X_t(P) +  \underbrace{((\barx - x) \bee_{\barx} + (\barz-z) \bee_{\barz})}_{:=B} \cdot Y_t(Q)\Bigr).
\end{align*}
Note that $|A| = |B| = d(P,Q)$. Add and subtract $A\cdot X_t(Q)$ and $B \cdot  X_t(Q)$ to the term in between the large brackets in the right-hand side above. Then estimate this term as:
\begin{align*}
A \cdot X_t(P) +  B \cdot Y_t(Q) & = A \cdot X_t(P) - A \cdot X_t(Q) + A \cdot  X_t(Q) + B \cdot  X_t(Q) - B \cdot  X_t(Q) + B \cdot Y_t(Q) \\
& \leq |A| \underbrace{|X_t(P) - X_t(Q)|}_{\leq L_X \, d(P,Q)} + |A+B| \| X\|_{L^\infty(\C\times[0,\tau))} + |B| \|X-Y \|_{L^\infty(\C\times[0,\tau))}.
\end{align*}
In computing $A+B$ the $z$-terms cancel and we get
\begin{align*}
|A+B| &= |x-\barx| |(-\sin x + \sin \barx, \cos x - \cos \barx, 0)| \\
& \leq \sqrt{2}  |x-\barx|^2 \leq \sqrt{2} \, {d(P,Q)}^2.
\end{align*}

Putting the estimates together we find:
\[
	\frac{\der}{\der t} d(P,Q) \leq (L_X+ \sqrt{2} \| X\|_{L^\infty(\C\times[0,\tau))}) d(P,Q) + \|X-Y \|_{L^\infty(\C\times[0,\tau))},
\]
and Gronwall's lemma will yield a similar result as in Lemma  \ref{lem:dist-flow-maps}.


\smallskip
{\em Analogue of Lemma \ref{lem:dist-flow-maps-K}.} To get a similar Lipschitz property, we consider three generic points $P=(x,z)$, $Q=(\barx,\barz)$ and $R=(\barx,\barz)$, and estimate:
\begin{align*}
\left | \nabla_{\Cyl^2} d_R^2(P) - \nabla_{\Cyl^2} d_R^2(Q) \right| &= 2 |(x-\tx) \bee_x + (z-\tz) \bee_z  - (\barx-\tx) \bee_{\barx} + (\barz-\tz) \bee_{\barz} | \\ 
& \leq 2 (| (x-\barx) \bee_x| +|(\barx-\tx) (\bee_x - \bee_{\barx})| + |(z-\barz)\bee_z|), 
\end{align*}
where in the above we added and subtracted $(\barx-\tx) \bee_x$, used $\bee_z = \bee_{\barz}$, and the triangle inequality. Then use
\[
|\bee_x - \bee_{\barx}| = |(-\sin x + \sin \barx, \cos x - \cos \barx, 0)| \leq \sqrt{2} |x - \barx|,
\]
together with $|\barx-\tx| \leq \pi$ and $|\bee_x| = |\bee_z| = 1$ to get
\begin{align}
\label{eqn:gradd2-cyl}
\left | \nabla_{\Cyl^2} d_R^2(P) -  \nabla_{\Cyl^2} d_R^2(Q) \right| &\leq 2 ((1+ \sqrt{2} \pi) |x - \tx| + |z- \tz|) \nonumber \\
& \leq 2 \sqrt{2} (1+ \sqrt{2} \pi) d(P,Q).
\end{align}

For a potential that satisfies \ref{hyp:K}, one estimates:
\begin{align}
\label{eqn:gradK-cyl}
& |\nabla_{\Cyl^2} K_R (P) - \nabla_{\Cyl^2} K_R(Q)|= |g'(d(P,R)^2) \nabla_{\Cyl^2} \, d_R^2(P) - g'(d(Q,R)^2) \nabla_{\Cyl^2} \, d_R^2(Q)| \nonumber \\[3pt]
& \qquad \leq |g'(d(P,R)^2) - g'(d(Q,R)^2)| |\nabla_{\Cyl^2} \, d_R^2(P)| + |g'(d(Q,R)^2)| |\nabla_{\Cyl^2} \, d_R^2(P) -  \nabla_{\Cyl^2} \, d_R^2(Q)| \nonumber \\[3pt]
& \qquad \leq 2 L_{g'} |d(P,R)^2 - d(Q,R)^2| d(P,R) + C_{g'}  |\nabla_{\Cyl^2} \, d_R^2(P) -  \nabla_{\Cyl^2} \, d_R^2(Q)| \nonumber \\
& \qquad \leq (2 L_{g'} (d(P,R) + d(Q,R)) d(P,R) +  C_{g'} 2 \sqrt{2} (1+ \sqrt{2} \pi)) d(P,Q),
\end{align}
where we added and subtracted $g'(d(Q,R)^2) \nabla_{\Cyl^2} \, d_R^2(P)$ on the first line and then used the triangle inequality, we used the bound and Lipschitz constant of $g'$ for the second inequality, and triangle inequality $|d(P,R)- d(Q,R)| \leq d(P,Q)$ and \eqref{eqn:gradd2-cyl} for the last inequality.

There is an important word of caution here, as the cylinder is unbounded and the bounds and Lipschitz constants of $g'$ need to be taken on compact sets. We deal with this issue similarly to how Ca{\~n}izo {et al. }\cite{CanizoCarrilloRosado2011} have dealt with the unboundedness of the Euclidean space. Namely, we consider estimates such as \eqref{eqn:gradK-cyl} only for points in an a priori fixed compact subset of $\C$, say of diameter $\diam$, in which case, for preciseness, one has to indicate the dependence of the constants on this diameter (i.e., $L_{g'}(\diam)$ and $C_{g'}(\diam)$). We also note that, for simplicity of notation, we have not indicated the dependence on $\e$ of the various constants, as we did for the sphere.

With this clarification, from \eqref{eqn:gradK-cyl} one can find the following Lipschitz estimate on a compact set of diameter $\diam$:
\begin{equation}
\label{eqn:gradKs-cyl}
	|\nabla_{\Cyl^2} K_R (P) - \nabla_{\Cyl^2} K_R(Q)| \leq L_{\diam} d(P,Q),
\end{equation}
where 
\[
L_{\diam} = 4 L_{g'}(\diam) \diam^2 +  C_{g'}(\diam) 2\sqrt{2} (1+ \sqrt{2} \pi).
\]
Then, similarly to the sphere case, one can use \eqref{eqn:gradKs-cyl} and the analogue of \eqref{eqn:X-diff} for the cylinder to establish a Lipschitz estimate as in Lemma \ref{lem:dist-flow-maps-K} for the vector field $\V[\rho]$ on $\C$, for $\rho\in\Cont([0,T);\P_\infty(\C))$ such that $\rho_t$ is supported within a compact subset of $\C$ of diameter $\Delta$ for all $t\in[0,T)$, where the Lipschitz constant is given by $L_{\diam}$ above.

On the other hand, the boundedness of $\V[\rho]$ is immediate. Indeed, for $\rho \in \Cont([0,T);\P_\infty(\C))$ such that $\rho_t$ is supported within a compact subset of $\C$ of diameter $\Delta$ for all $t\in[0,T)$, one has, for all $(x,t) \in \C\times[0,T)$:
\begin{align}
\label{eqn:boundX-cyl}
	|\V[\rho](x,t)| & \leq \int_{\C} |g'(d(x,y)^2) \nabla_{\Cyl^2} d_y^2(x)| \d\rho_t(y) \nonumber \\
	&\leq 2 \Delta C_{g'}(\Delta).
\end{align}


\smallskip

{\em Analogue of Lemma \ref{lem:grad-lip-2}.} Finally, we show how one can get a Lipschtiz condition of type \eqref{eqn:X-Yest} on the cylinder. For three points $P=(x,z)$, $Q=(\barx,\barz)$ and $R=(\tx,\tz)$, one finds:
\begin{align}
\label{eqn:gradd2-cyl2}
\left | \nabla_{\Cyl^2} d_Q^2(P) -  \nabla_{\Cyl^2} d_R^2(P) \right| &= 2|(\barx - \tx) \bee_x + (\barz - \tz) \bee_z| \nonumber \\
& = 2 \left( (\barx - \tx)^2 + (\barz - \tz)^2\right)^{1/2} \nonumber \\
&= 2 \, d(Q,R).
\end{align}
Then, for a potential $K$ that satisfies \ref{hyp:K}, we estimate:
\begin{align}
\label{eqn:gradK-cyl2}
& |\nabla_{\Cyl^2} K_Q(P) - \nabla_{\Cyl^2} K_R(P)|= |g'(d(P,Q)^2) \nabla_{\Cyl^2} d_Q^2(P) - g'(d(P,R)^2) \nabla_{\Cyl^2} d_R^2(P)| \nonumber \\[3pt]
&\qquad \leq  |g'(d(P,Q)^2) - g'(d(P,R)^2)| |\nabla_{\Cyl^2} d_Q^2(P)| + | g'(d(P,R)^2)| |\nabla_{\Cyl^2} d_Q^2(P) - \nabla_{\Cyl^2} d_R^2(P)| \nonumber \\[3pt]
&\qquad \leq (4 L_{g'}(\Delta) \Delta^2 + 2 C_{g'}(\Delta)) d(Q,R),
\end{align}
where we added and subtracted $g'(d(P,R)^2) \nabla_{\Cyl^2} d_Q^2(P)$ on the first line and then used the triangle inequality, and for the second inequality we used the bounds and Lipschitz constant of $g'$, the triangle inequality $|d(P,Q) - d(P,R)| \leq d(Q,R)$, equation \eqref{eqn:gradd2-cyl2}, and the a priori assumption that the three points lie on a set of diameter $\Delta$.

Similar to the proof for the sphere in Lemma \ref{lem:grad-lip-2}, estimate \eqref{eqn:gradK-cyl2} leads to a Lipschitz condition like \eqref{eqn:X-Yest} for $\rho,\sigma \in\Cont([0,T);\P_\infty(\C))$ such that $\rho_t$ and $\sigma_t$ are supported within a compact subset of $\C$ of diameter $\Delta$, with the Lipschitz constant given by:
\[
\Lip_\Delta = 4 L_{g'}(\Delta) \Delta^2 + 2 C_{g'}(\Delta).
\]

The considerations above lead to the following well-posedness result on the cylinder.
\begin{thm}[Well-posedness on open half-cylinder]\label{thm:well-posedness-cyl}
	Suppose that $K$ satisfies \ref{hyp:K} and let $\rho_0 \in \P_\infty(\C)$. Then, there exist a time $T>0$, a compact set $\supp(\rho_0)\subset Q\subset \C$, and a unique weak solution to \eqref{eqn:model} among all curves in $\Cont([0,T);\P(Q))$ starting from $\rho_0$, where $\P(Q)$ denotes the set of probability measures which are supported within $Q$. 
	\end{thm}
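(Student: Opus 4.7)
The plan is to adapt verbatim the fixed-point argument of Theorem \ref{thm:well-posedness} to the cylinder, the only genuine new difficulty being that $\C$ is unbounded and so all the relevant constants depend on the diameter of the region in which the measures under consideration are supported. First I would fix a compact, geodesically convex subset $Q\subset\C$ containing $\supp(\rho_0)$ strictly in its interior (write $\Delta$ for its diameter), chosen so that the open $r$-neighbourhood of $\supp(\rho_0)$ in $\C$ is contained in $Q$ for some $r>0$. Set $M_\Delta := 2\Delta C_{g'}(\Delta)$; by \eqref{eqn:boundX-cyl} this is a uniform bound on $\|\V[\sigma]\|_{L^\infty(\C\times[0,\tau))}$ valid for every curve $\sigma\in\Cont([0,\tau);\P(Q))$, so any integral curve of $\V[\sigma]$ starting in $\supp(\rho_0)$ moves by at most $M_\Delta t$ up to time $t$. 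Setting $\tau := r/M_\Delta$ therefore guarantees that $\Psi^t_{\V[\sigma]}(\supp(\rho_0))\subset Q$ for every $t\in[0,\tau)$.

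Next I would define, exactly as in \eqref{eqn:Gamma},
\[
	\Gamma(\sigma)(t) = \Psi^t_{\V[\sigma]}\#\rho_0, \qquad \sigma\in\Cont([0,\tau);\P(Q)),\ t\in[0,\tau).
\]
Existence and uniqueness of the flow map $\Psi_{\V[\sigma]}$ on $\supp(\rho_0)\times[0,\tau)$ come from Theorem \ref{thm:Cauchy-Lip}, which applies thanks to the cylinder Lipschitz estimate \eqref{eqn:gradKs-cyl} (the cylinder analogue of Lemma \ref{lem:dist-flow-maps-K}). Continuity of $t\mapsto\Gamma(\sigma)(t)$ in $W_1$ follows from the cylinder analogue of Lemma \ref{lem:preliminary}\ref{it:prel2} together with the bound $M_\Delta$; combined with the previous step this shows that $\Gamma$ sends $(\Cont([0,\tau);\P(Q)),\bd_1)$ into itself.

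For the contraction step, a verbatim repetition of the sphere estimate, using the cylinder analogues of Lemma \ref{lem:preliminary}\ref{it:prel1}, Lemma \ref{lem:dist-flow-maps} and the Lipschitz-in-measure bound of Lemma \ref{lem:grad-lip-2} (with constant $\Lip_\Delta := 4L_{g'}(\Delta)\Delta^2 + 2C_{g'}(\Delta)$ coming from \eqref{eqn:gradK-cyl2}), yields, for $\rho,\sigma\in\Cont([0,\tau);\P(Q))$ and $t\in[0,\tau)$,
\[
	W_1\bigl(\Psi^t_{\V[\rho]}\#\rho_0,\Psi^t_{\V[\sigma]}\#\rho_0\bigr) \leq C(\Delta,t)\,\Lip_\Delta\,\bd_1(\rho,\sigma),
\]
with $C(\Delta,t)\to 0$ as $t\to 0$. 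Restricting $T\leq\tau$ so that $C(\Delta,T)\Lip_\Delta<1$ makes $\Gamma$ a strict contraction on the complete metric space $(\Cont([0,T);\P(Q)),\bd_1)$, and the Banach fixed-point theorem supplies the unique $\rho\in\Cont([0,T);\P(Q))$ with $\rho_t = \Psi^t_{\V[\rho]}\#\rho_0$, which is the desired weak solution.

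The hard part, absent in the sphere case, is the coupling between $Q$ and $T$: enlarging $Q$ inflates $\Delta$ and hence $M_\Delta$ and $\Lip_\Delta$, shortening the available contraction time, while $Q$ must nonetheless be large enough to trap all trajectories over the whole of $[0,T)$. The saving grace is that both $M_\Delta$ and $\Lip_\Delta$ are uniform over $\sigma\in\Cont([0,\tau);\P(Q))$ once $Q$ is fixed, and $C(\Delta,t)\to 0$ as $t\to 0$; consequently, after $Q$ is chosen the two constraints are simultaneously satisfiable by shrinking $T$.
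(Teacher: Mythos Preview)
Your proposal is correct and follows essentially the same approach as the paper's proof. The only cosmetic difference is that the paper constructs $Q$ explicitly as the cylindrical band between $z=2z_m-z_M$ and $z=2z_M-z_m$ (where $\supp(\rho_0)$ lies between $z=z_m$ and $z=z_M$) and sets $T=(z_M-z_m)/(2\Delta C_{g'}(\Delta))$, whereas you work with an abstract $r$-neighbourhood and $\tau=r/M_\Delta$; both choices serve the same purpose of trapping trajectories via the velocity bound \eqref{eqn:boundX-cyl}, and the rest of the argument---contraction via the cylinder analogues of Lemmas \ref{lem:dist-flow-maps}, \ref{lem:dist-flow-maps-K} and \ref{lem:grad-lip-2}---is identical.
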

\begin{proof}
The proof is very similar to that of Theorem \ref{thm:well-posedness} and we just sketch it here. Because of the unboundedness of $\C$, the proof slightly differs from the case of the sphere; as already mentioned, we deal with this issue by considering only solutions which stay supported within an a priori fixed compact set. More specifically, in this proof take $z_m<z_M$ such that $\text{supp}(\rho_0)$ lies within the cylindrical band between $z=z_m$ and $z=z_M$, and write $Q_\Delta$ the compact cylindrical band between $z=2 z_m - z_M$ and $z= 2z_M-z_m$, whose diameter we denote by $\Delta$. 

The idea is to consider a map $\Gamma$ on $\Cont([0,\tau);\P(Q_\Delta))$, analogously defined as in \eqref{eqn:Gamma}, where $\tau>0$ is the maximal time so that $\Gamma$ is well-defined, and to show that, if restricted to some time interval $[0,T)$ with $T\leq\tau$ small enough, $\Gamma$ is a map from $\Cont([0,T);\P(Q_\Delta))$ into itself and a contraction.

We first show that $\supp(\Gamma(\sigma)(t)) \subset Q_\Delta$ for all $t\in[0,T)$, for some $T\leq \tau$ to be chosen later, and for all $\sigma \in \Cont([0,\tau);\P(Q_\Delta))$. Take such $\sigma$ and then, given the bound \eqref{eqn:boundX-cyl} on the velocity field $\V[\sigma]$ and the bounds on $\supp(\rho_0)$, note that $\text{supp}(\Gamma(\sigma)(t))$ lies within the cylindrical band between $z=2 z_m - z_M$ and $z= 2z_M-z_m$, i.e., within $Q_\Delta$, provided $t<(z_M-z_m)/(2 \Delta C_{g'}(\Delta)) =: T$. Similarly as in the proof of Theorem \ref{thm:well-posedness}, one then finds that $\Gamma$ defines a map from the metric space $(\Cont([0,T);\P(Q_\Delta)),\bd_1)$ into itself. 

The rest of the proof, that is, showing that $\Gamma$ is a contraction, follows exactly as for Theorem \ref{thm:well-posedness} (by eventually restricting $T$ further) using the analogues of Lemmas  \ref{lem:dist-flow-maps}, \ref{lem:dist-flow-maps-K}, \ref{lem:grad-lip-2} for the cylinder, as established above. We leave the details to the reader.
\end{proof}

\begin{rem}
	Thanks to the considerations above, the results for the sphere of Section \ref{subsect:stability} hold analogously for the cylinder; in particular, stability and mean-field limit hold true on the cylinder.
\end{rem}

\subsection{Consensus on product manifolds}
\label{subsect:product}

In this subsection we consider the intrinsic aggregation model on product manifolds. Specifically, given two smooth, complete and connected Riemannian manifolds $(M_1,g_1)$, $(M_2,g_2)$, we consider $M=M_1\times M_2$ with the product metric $g_1+g_2$ \cite{Lee1997}. The goal is to infer the formation of consensus on the product manifold $M$ from aggregation phenomena known on $M_1$ and $M_2$. We denote by $\U_1$ and $\U_2$ two generic open, geodesically convex subsets of $M_1$ and $M_2$, respectively, and set $\U=\U_1\times\U_2$.

The minimizing geodesic $\gamma$ connecting points $(x,y),(\bar{x},\bar{y})\in\U$ can be expressed as:
\[
\gamma(t)=(\gamma_1(t), \gamma_2(t)), \qquad \text{for } t\in[0, 1],
\]
where $\gamma_1$ and $\gamma_2$ are the minimizing geodesics connecting $x$ and $\bar{x}$ on $\U_1$, and $y$ and $\bar{y}$ on $\U_2$, respectively. We consider the product distance between the two points on $M$ to be given by:
\begin{equation}
\label{eq:prod-dist}
d((x,y), (\bar{x},\bar{y}))=\sqrt{d_1(x,\bar x)^2+d_2(y, \bar{y})^2},
\end{equation}
where $d_1$ and $d_2$ are the Riemannian distances on $M_1$ and $M_2$, respectively.  Finally, particularly important for the considerations of this section, from the definition of the product manifold the Riemannian logarithm on $\U$ is given by:
\begin{equation}
\label{eqn:logM}
\log_{(x, y)}(\bar{x}, \bar{y})=(\log_{x} \bar{x}, \log_{y} \bar{y}).
\end{equation}

\begin{ex}
\label{ex:prod-manifolds}
We give below a few examples of common product manifolds.
\begin{enumerate}
\item {\em Euclidean space} $\mathbb{R}^{\dim+\tilde{\dim}}=\mathbb{R}^\dim \times \mathbb{R}^{\tilde{\dim}}$.
\item {\em Cylinder} $\mathbb{S}^1\times\R$, where $\mathbb{S}^1$ represents the unit circle with induced metric from $\R^2$.
\item {\em Flat torus} $\mathbb{S}^1\times \mathbb{S}^1$, considered as a subset of $\R^4$, where $\mathbb{S}^1$ has the  induced metric from $\R^2$.
\end{enumerate}
\end{ex}

Consider now our intrinsic aggregation model on a product manifold $M$ with a (purely attractive) quadratic potential given by:
\begin{equation}
\label{eqn:K-quad}
K(z,\bar{z}) = \frac{1}{2} d^2(z,\bar{z}), \qquad \mbox{for all $z,\bar z\in M$},
\end{equation}
i.e., an interaction potential in the form \eqref{eqn:K-gen}, with $g(s)=s/2$. For this potential,
\[
	\nabla_{\M} K_{\bar z}(z) = -\log_z \bar{z},  \qquad \mbox{for all $z,\bar z\in \U$}.
\]

We first show that with this interaction potential, solutions to the aggregation model on the product manifold $\M$ can be obtained from solutions on its components $M_1$ and $M_2$. 
\begin{prop}[Well-posedness in continuum product model]\label{prop:well-posedness-prod}
	Let $K$ be as in \eqref{eqn:K-quad}, and suppose that there exist unique solutions $\rho^1\in\Cont([0,T);\P_1(\U_1))$ and $\rho^2\in\Cont([0,T);\P_1(\U_2))$ to model \eqref{eqn:model} on $\U_1$ and $\U_2$, respectively. Then, $\mu:=\rho^1\otimes\rho^2$ is the unique weak solution to \eqref{eqn:model} among curves in $\Cont([0,T);\P_1(\U))$.
\end{prop}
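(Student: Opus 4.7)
The plan is to exploit the product structure of the logarithm \eqref{eqn:logM} to show that, for the quadratic potential \eqref{eqn:K-quad}, the interaction velocity field on $\U$ decouples into the velocity fields of the factors and depends on its generating measure only through its marginals. Concretely, from \eqref{eqn:v-field} together with $\nabla_\M K_{(\bar x, \bar y)}(x,y) = -\log_{(x,y)}(\bar x, \bar y) = (-\log_x \bar x, -\log_y \bar y)$, one checks that for any $\mu \in \Cont([0,T); \P_1(\U))$ with marginals $\mu^i = (\pi_i)_\# \mu$ (writing $\pi_i$ for the canonical projection onto $\U_i$),
\[
	\V[\mu](x,y,t) = \bigl(\V_1[\mu^1](x,t),\; \V_2[\mu^2](y,t)\bigr),
\]
where $\V_i$ denotes the interaction velocity field on $\U_i$ associated with the quadratic potential $\tfrac12 d_i^2$. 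This decoupling identity is the engine of the proof.

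For existence, take $\mu = \rho^1 \otimes \rho^2$. Its marginals are $\rho^1$ and $\rho^2$, so the decoupling yields $\V[\mu] = (\V_1[\rho^1], \V_2[\rho^2])$ componentwise. By the assumed well-posedness of $\rho^i$, each $\V_i[\rho^i]$ generates its unique flow map $\Psi^t_{\V_i[\rho^i]}$ on $\supp(\rho^i_0)$, so their Cartesian product $(x,y) \mapsto (\Psi^t_{\V_1[\rho^1]}(x), \Psi^t_{\V_2[\rho^2]}(y))$ is a flow map of $\V[\mu]$ on $\supp(\mu_0) = \supp(\rho^1_0)\times\supp(\rho^2_0)$, necessarily the unique one. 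Pushing $\mu_0$ forward through it gives $\rho^1_t \otimes \rho^2_t = \mu_t$, thereby verifying \eqref{eq:rho-push-forward}. Continuity of $t\mapsto \mu_t$ in $\P_1(\U)$ follows from the product-plan bound $W_1(\rho^1_t\otimes\rho^2_t, \rho^1_s\otimes\rho^2_s) \leq W_1(\rho^1_t,\rho^1_s) + W_1(\rho^2_t,\rho^2_s)$, a direct consequence of \eqref{eq:prod-dist} and $\sqrt{a^2+b^2}\leq a+b$.

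For uniqueness, let $\nu\in\Cont([0,T); \P_1(\U))$ be any weak solution with $\nu_0 = \mu_0$, and set $\nu^i = (\pi_i)_\# \nu$. The decoupling of $\V[\nu]$ implies that the first coordinate of $\Psi^t_{\V[\nu]}(x,y)$ depends only on $x$, and similarly for the second; this yields $\pi_i\circ \Psi^t_{\V[\nu]} = \Psi^t_{\V_i[\nu^i]}\circ\pi_i$ on $\supp(\mu_0)$, where the right-hand side defines a well-posed flow map of $\V_i[\nu^i]$ on $\supp(\rho^i_0)$. Projecting $\nu_t = \Psi^t_{\V[\nu]}\#\mu_0$ under $\pi_i$ then gives $\nu^i_t = \Psi^t_{\V_i[\nu^i]}\#\rho^i_0$, so each $\nu^i$ is a weak solution on $\U_i$ starting from $\rho^i_0$; by the assumed uniqueness on the factors, $\nu^i = \rho^i$. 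Consequently $\V[\nu] = \V[\mu]$, both curves push $\mu_0$ through the same flow map, and $\nu = \mu$. The main technical point — and the only nontrivial step — is this projection argument: verifying that each marginal $\nu^i$ is a genuine weak solution on $\U_i$ in the sense of the paper requires extracting a well-posed flow map for $\V_i[\nu^i]$ out of the decomposition of $\Psi^t_{\V[\nu]}$, which is where the product form of the velocity field induced by the quadratic potential plays its decisive role.
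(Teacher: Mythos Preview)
Your existence argument is essentially the paper's: both define the candidate flow map on $\U$ as the Cartesian product of the component flow maps, verify via the decoupling identity $\V[\mu]=(\V_1[\rho^1],\V_2[\rho^2])$ that it solves the flow equation for $\V[\mu]$, and then check $\Psi^t_{\V[\mu]}\#\mu_0=\rho^1_t\otimes\rho^2_t$.

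Where you go beyond the paper is uniqueness. The paper's proof stops after verifying that $\mu=\rho^1\otimes\rho^2$ is a weak solution; it does not argue that no other weak solution in $\Cont([0,T);\P_1(\U))$ can start from $\mu_0$. Your projection argument fills this gap: by the decoupling of $\V[\nu]$ through the marginals, the flow of any competitor $\nu$ factors componentwise, each $\nu^i=(\pi_i)_\#\nu$ is a weak solution on $\U_i$ with initial datum $\rho^i_0$, the assumed uniqueness on the factors forces $\nu^i=\rho^i$, hence $\V[\nu]=\V[\mu]$ and $\nu=\mu$. This step is correct and genuinely needed for the statement as written. The only care required---which you flag---is that the projected curve $t\mapsto\pi_i\circ\Psi^t_{\V[\nu]}$ really defines a flow map of $\V_i[\nu^i]$ on $\supp(\rho^i_0)$ with the required uniqueness; this follows because the ODE for $\Psi^t_{\V[\nu]}$ decouples into independent ODEs in each factor, so uniqueness of integral curves at each $(x,y)\in\supp(\mu_0)=\supp(\rho^1_0)\times\supp(\rho^2_0)$ transfers to uniqueness of the projected integral curves at each $x\in\supp(\rho^1_0)$.
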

\begin{proof}
	For all $(x,y)=:z\in\supp(\mu_0) = \supp(\rho^1_0\otimes\rho^2_0)$ and $t\in[0,T)$, set
	\bes
		 \Psi_{\V[\mu]}(z,t) = (\Psi_{\V[\rho^1]}(x,t),\Psi_{\V[\rho^2]}(y,t)) =: (\Psi_{\V[\rho^1]}^t,\Psi_{\V[\rho^2]}^t)(x,y),
	\ees
	where $\Psi_{\V[\rho^1]}$ and $\Psi_{\V[\rho^2]}$ are the unique flow maps generated by $\V[\rho^1]$ and $\V[\rho^2]$ and defined on the time interval $[0,T)$, and compute
	\begin{align*}
		\frac{\d}{\d t} \Psi_{\V[\mu]}(z,t) &= \left( \frac{\d}{\d t} \Psi_{\V[\rho^1]}(x,t), \frac{\d}{\d t} \Psi_{\V[\rho^2]}(y,t) \right)\\
		&= (\V[\rho^1](x,t),\V[\rho^2](y,t)).
	\end{align*}
Further, by using the specific form of $K$ in \eqref{eqn:K-quad}, along with \eqref{eqn:logM}, we find:
        \begin{align*}
		(\V[\rho^1](x,t),\V[\rho^2](y,t)) &= \left( \int_{M_1} \log_x \bar x \d\rho^1_t(\bar x), \int_{M_2} \log_y \bar y \d \rho^2_t(\bar y) \right)\\
		&= \int_{M_1}\int_{M_2} (\log_x \bar x,\log_y \bar y) \d \rho^2_t(\bar y) \d\rho^1_t(\bar x) \\ 
		&= \int_\M \log_{z} \bar z \d \mu_t(\bar z) \\
		&= \V[\mu](z,t).
	\end{align*}
	Hence $\Psi_{\V[\mu]}$ is the unique flow map generated by $v[\mu]$ and defined on the time interval $[0,T)$. Recalling now that, for all $t\in[0,T)$,
\bes
	\rho^1_t = \Psi_{\V[\rho^1]}^t \# \rho^1_0, \qquad \rho^2_t = \Psi_{\V[\rho^2]}^t \# \rho^2_0,
\ees
we have
\bes
	\Psi_{\V[\mu]}^t\#\mu_0 = (\Psi_{\V[\rho^1]}^t,\Psi_{\V[\rho^2]}^t)\# (\rho^1_0\otimes\rho^2_0) = (\Psi_{\V[\rho^1]}^t \# \rho^1_0) \otimes (\Psi_{\V[\rho^2]}^t \# \rho^2_0) = \rho^1_t\otimes \rho^2_t = \mu_t,
\ees
which ends the proof.
\end{proof}
This result gives us the continuum consensus on product manifolds:

\begin{prop}[Asymptotic consensus in continuum product model]
\label{prop:cons-prod}
	Let $K$ be as in \eqref{eqn:K-quad}. Suppose that there exist unique global solutions $\rho^1\in\Cont([0,\infty);\P_1(\U_1))$ and $\rho^2\in\Cont([0,\infty);\P_1(\U_2))$ to model \eqref{eqn:model} on $\U_1$ and $\U_2$, respectively, and suppose they reach asymptotic consensus, that is, there exist $p\in\U_1$ and $q\in\U_2$ such that $W_1(\rho_t^1,\delta_p) \to 0$ and $W_1(\rho_t^2,\delta_q)\to0$ as $t\to\infty$. Assume moreover that there exist $\bar t>0$ and compact sets $Q_1\subset \U_1$ and $Q_2\subset \U_2$ such that $\supp(\rho^1_t) \subset Q_1$ and $\supp(\rho^2_t) \subset Q_2$ for all $t\in[\bar t,\infty)$. Then, the unique global weak solution $\mu$ to \eqref{eqn:model} on $\U$ from Proposition \ref{prop:well-posedness-prod} satisfies $W_1(\mu_t,\delta_{(p,q)})\to0$ as $t\to\infty$.
\end{prop}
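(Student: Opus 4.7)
The plan is direct and leverages Proposition \ref{prop:well-posedness-prod}, which already identifies the solution on the product as the tensor product of the components: $\mu_t = \rho^1_t \otimes \rho^2_t$ for all $t \in [0,\infty)$. So the only task is to bound $W_1(\rho^1_t\otimes\rho^2_t, \delta_{(p,q)})$ in terms of the two component Wasserstein distances $W_1(\rho^1_t,\delta_p)$ and $W_1(\rho^2_t,\delta_q)$, both of which tend to zero by hypothesis.

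The key observation is the sub-additivity of the product distance. By \eqref{eq:prod-dist} and the elementary inequality $\sqrt{a^2+b^2} \leq a+b$ for $a,b \geq 0$,
\[
	d((x,y),(\bar x,\bar y)) = \sqrt{d_1(x,\bar x)^2 + d_2(y,\bar y)^2} \leq d_1(x,\bar x) + d_2(y,\bar y), \qquad \mbox{for all $(x,y),(\bar x,\bar y) \in \U$.}
\]
Since transport against a Dirac is trivial (the only admissible plan being the product), $W_1(\nu,\delta_{z_0}) = \int d(z,z_0)\d\nu(z)$ for any $\nu\in\P_1(\U)$ and any $z_0\in\U$; applying this to $\nu = \mu_t = \rho^1_t\otimes\rho^2_t$ and $z_0 = (p,q)$, together with the above inequality and Fubini's theorem, I obtain
\begin{align*}
	W_1(\mu_t,\delta_{(p,q)}) &= \int_{\U_1\times \U_2} d((x,y),(p,q)) \d\rho^1_t(x)\d\rho^2_t(y)\\
	&\leq \int_{\U_1} d_1(x,p) \d\rho^1_t(x) + \int_{\U_2} d_2(y,q)\d\rho^2_t(y)\\
	&= W_1(\rho^1_t,\delta_p) + W_1(\rho^2_t,\delta_q),
\end{align*}
and the right-hand side vanishes as $t\to\infty$.

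The compact support hypothesis plays essentially a bookkeeping role: it guarantees that for $t\geq \bar t$ the measures $\rho^1_t$ and $\rho^2_t$ lie in $\P_\infty(\U_1)$ and $\P_\infty(\U_2)$ (in particular in $\P_1$), so the Fubini step and the integral representation of $W_1$ against a Dirac are rigorously justified, and the limits $p$ and $q$ are realized inside $\U_1$ and $\U_2$ as required for $\delta_{(p,q)}\in\P(\U)$ to make sense. There is no substantive obstacle; the argument amounts to recognizing the tensor structure supplied by Proposition \ref{prop:well-posedness-prod} and the sub-additivity of the product metric.
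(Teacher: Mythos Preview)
Your proof is correct and in fact cleaner than the paper's. The paper argues in two steps: first, from $W_1(\rho^i_t,\delta_{p_i})\to 0$ it infers narrow convergence of the factors, hence narrow convergence of the tensor product $\mu_t=\rho^1_t\otimes\rho^2_t$ to $\delta_p\otimes\delta_q=\delta_{(p,q)}$; second, it invokes the eventual compact support $Q_1\times Q_2$ to upgrade narrow convergence to $W_1$-convergence. Your route bypasses this by writing $W_1(\mu_t,\delta_{(p,q)})$ explicitly as a first moment and using the sub-additivity $\sqrt{d_1^2+d_2^2}\leq d_1+d_2$ to bound it by $W_1(\rho^1_t,\delta_p)+W_1(\rho^2_t,\delta_q)$ directly.

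One remark: your closing paragraph undersells your own argument. The compact-support hypothesis is not merely ``bookkeeping'' for your proof---it is entirely superfluous. Since by assumption $\rho^i_t\in\P_1(\U_i)$ for all $t$, and $p\in\U_1$, $q\in\U_2$ are given, your inequality already shows $\mu_t\in\P_1(\U)$ and $W_1(\mu_t,\delta_{(p,q)})\to 0$ with no compactness needed. By contrast, the paper's narrow-to-$W_1$ upgrade genuinely uses compactness (and the paper's subsequent Remark explicitly discusses ways to relax it, via $W_2$ or via the additive product distance $d_1+d_2$). Your bound in effect exploits the equivalence with the additive distance from the outset, and so avoids the issue altogether.
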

\begin{proof}
	Since $\mu = \rho^1\otimes\rho^2$, we get that $(\mu_t)_{t\geq0}$ converges narrowly to $\delta_p\otimes\delta_q = \delta_{(p,q)}$. Furthermore, the compactness of $Q_1\times Q_2$ ensures that in fact $W_1(\mu_t,\delta_{(p,q)})\to0$ as $t\to\infty$.
\end{proof}

\begin{rem}
	In Proposition \ref{prop:cons-prod}, the assumption of support compactness beyond a certain time allows us to infer consensus on the product manifold in the $W_1$ topology from the $W_1$ consensus on each component. Indeed, it ensures the convergence of the first moment of the product measure. 
	
	We observe that this assumption can be relaxed if consensus in each component is regarded in the $W_2$ topology instead, that is, in the topology given by the quadratic Wasserstein distance, as illustrated by the following computation. Given $z_0=(x_0,y_0)\in\U$, the second moment of $\mu_t = \rho^1_t\otimes\rho^2_t$ with respect to $z_0$ for all $t\in[0,\infty)$, satisfies
	\begin{align*}
		\int_\U d(z_0,z)^2 \d\mu_t(z) &= \int_{\U_1} \int_{\U_2} d((x_0,y_0),(x,y))^2 \d\rho^2_t(y)\d\rho^1_t(x)\\
		&= \int_{\U_1} \int_{\U_2} (d_1(x_0,x)^2 + d_2(y_0,y)^2) \d\rho^2_t(y)\d\rho^1_t(x)\\
		&= \int_{\U_1} d_1(x_0,x)^2 \d\rho^1_t(x) + \int_{\U_2} d_2(y_0,y)^2 \d\rho^2_t(y).
	\end{align*}
Then, provided $W_2(\rho_t^1,\delta_p) \to 0$ and $W_2(\rho_t^2,\delta_q)\to0$ as $t\to\infty$, we get
\begin{align*}
	\int_\U d(z_0,z)^2 \d \mu_t(z) \to & \int_{\U_1} d_1(x_0,x)^2 \d\delta_p(x) + \int_{\U_2} d_2(y_0,y)^2 \d\delta_q(y) \\ 
	& = \int_\U d(z_0,z)^2 \d \delta_{(p,q)}(z), \qquad \mbox{as $t\to\infty$},
\end{align*}
and conclude that $W_2(\mu_t,\delta_{(p,q)})\to0$ as $t\to\infty$. 

We further note from the computation above that as an alternative to changing the topology of consensus, the compactness assumption in Proposition \ref{prop:cons-prod} can also be relaxed by choosing $d = d_1 + d_2$ as product distance instead of \eqref{eq:prod-dist}.
\end{rem}


To be able to consider a wider range of product manifolds (see Remark \ref{rem:consensus-prod}), we consider now the case when $\M$ is the real line $\R$, equipped with the canonical topology. The well-posedness and asymptotic consensus on $\R$ with a quadratic attractive potential is well-known in the literature; well-posedness can be obtained by following the same ideas as in the proof of Proposition \ref{prop:inv-cont}, while we present the consensus below for completeness.
\begin{lem}\label{lem:consR}
	Consider model \eqref{eqn:model} on $\R$ with quadratic interaction potential, $K(x,y)= \frac{1}{2}|x -y|^2$, and a compactly supported initial measure $\rho_0$. Then, the unique global weak solution $\rho$ starting from $\rho_0$ reaches asymptotic consensus.
\end{lem}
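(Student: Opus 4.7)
The plan is to exploit the explicit structure of the velocity field that the quadratic potential provides on $\R$. Since $K(x,y)=\tfrac12|x-y|^2$ satisfies $\nabla K_y(x) = x-y$, the interaction velocity field associated with a curve $\rho$ simplifies to
\[
	\V[\rho](x,t) = -\int_\R (x-y)\d\rho_t(y) = \bar x(t) - x, \qquad \bar x(t) := \int_\R y \d\rho_t(y).
\]
Because $\rho_0$ has compact support, Theorem~\ref{thm:well-posedness} (applied in the same way as for the sphere, with the simpler Euclidean geometry) yields a unique global weak solution $\rho$, written as $\rho_t = \Psi_{\V[\rho]}^t\#\rho_0$ for the unique flow map $\Psi_{\V[\rho]}$ generated by $\V[\rho]$.

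The first key step is to show that the centre of mass is conserved along the flow. Indeed, using the push-forward formulation, antisymmetry of $\nabla K(x,y)$ in its arguments, and Fubini,
\[
	\frac{\der}{\der t} \bar x(t) = \int_\R \V[\rho](x,t) \d\rho_t(x) = \int_\R (\bar x(t) - x) \d\rho_t(x) = 0,
\]
so $\bar x(t) \equiv \bar x_0 := \int_\R y \d\rho_0(y)$ for all $t\geq0$. Once this is established, the characteristic ODE \eqref{eq:characteristics-general} becomes the linear equation $\tfrac{\der}{\der t}\Psi^t(x) = \bar x_0 - \Psi^t(x)$ with $\Psi^0(x)=x$, whose solution is explicit:
\[
	\Psi_{\V[\rho]}^t(x) = \bar x_0 + e^{-t}(x-\bar x_0), \qquad \mbox{for all $x\in\supp(\rho_0)$ and $t\in[0,\infty)$}.
\]

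The final step is to convert this pointwise exponential contraction of characteristics towards $\bar x_0$ into convergence of $\rho_t$ to the Dirac mass $\delta_{\bar x_0}$ in Wasserstein distance. By Lemma~\ref{lem:preliminary}\ref{it:prel1} applied with $\Psi_1=\Psi_{\V[\rho]}^t$ and $\Psi_2 \equiv \bar x_0$ (so that $\Psi_2\#\rho_0 = \delta_{\bar x_0}$), together with compact support of $\rho_0$,
\[
	W_1(\rho_t,\delta_{\bar x_0}) \leq \sup_{x\in\supp(\rho_0)} |\Psi_{\V[\rho]}^t(x) - \bar x_0| = e^{-t} \sup_{x\in\supp(\rho_0)} |x-\bar x_0| \to 0 \quad \mbox{as $t\to\infty$},
\]
which shows that $\rho$ reaches asymptotic consensus at $\bar x_0$. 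There is no real obstacle in this argument: the quadratic potential linearises the characteristic ODE and trivialises the dynamics of the mean, so the main content is simply writing down the explicit formula for $\Psi_{\V[\rho]}^t$ and invoking the push-forward estimate of Lemma~\ref{lem:preliminary}\ref{it:prel1}.
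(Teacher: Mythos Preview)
Your proof is correct and takes a genuinely different route from the paper's. The paper tracks the diameter of the support: writing $x_1(t)$ and $x_2(t)$ for the left and right endpoints of $\supp(\rho_t)$, it computes $\frac{\der}{\der t}|x_1-x_2|^2 = -2|x_1-x_2|^2$ and concludes that the diameter shrinks like $e^{-t}$. Your approach instead observes that the centre of mass $\bar x(t)$ is conserved, which linearises the characteristic ODE and yields the explicit flow $\Psi^t(x)=\bar x_0+e^{-t}(x-\bar x_0)$; you then convert this into $W_1$ convergence via Lemma~\ref{lem:preliminary}\ref{it:prel1}. Your argument has the advantage of identifying the consensus point explicitly as the initial centre of mass, and of giving the full flow map rather than just a diameter bound. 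The paper's argument, on the other hand, is closer in spirit to the invariant-disk arguments used in Propositions~\ref{prop:inv-cont} and~\ref{prop:inv-disc}, and avoids any appeal to the centre of mass. Both are equally elementary for the quadratic potential on $\R$.
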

\begin{proof}
For all $t\in[0,\infty)$, denote by $x_1(t)$ and $x_2(t)$ the left- and right-end points of the support of $\rho_t$, respectively; we know $x_1(t)$ and $x_2(t)$ are finite by the same arguments as given in the proof of Proposition \ref{prop:inv-cont}. Then, using that $\nabla K_y(x) = x - y$ for all $x,y\in\R$, we find, for all $t\in[0,\infty)$:
\begin{align*}
\frac{\der}{\der t}|x_1-x_2|^2(t) &=2 \left( \int_\R (x-x_1(t)) \d \rho_t(x) - \int_\R (x-x_2(t)) \d \rho_t(x)\right) \cdot (x_1(t)-x_2(t)) \\
&=-2|x_1(t)-x_2(t)|^2,
\end{align*}
so that
\[
|x_1(t)-x_2(t)|=|x_1(0)-x_2(0)| \exp\left(- t\right),
\]
which yields the conclusion by taking $t\to0$. 
\end{proof}
Note that the asymptotic consensus in Lemma \ref{lem:consR} is unconditional, in the sense that {\em any} initial configuration with compact support evolves into a consensus state. 
\begin{rem}
\label{rem:consensus-prod}
Using Proposition \ref{prop:cons-prod} one can infer results on asymptotic convergence to a consensus state for a variety of product manifolds, including those illustrated in Example \ref{ex:prod-manifolds}. Indeed, consensus on circle $\mathbb{S}^1$ (or $\mathbb{S}^{\dim}$ in general) is shown in Theorem \ref{thm:consensus-cont}, while consensus on $\R$ is shown in Lemma \ref{lem:consR}.
\end{rem}

The results above can be applied in the discrete setting. In brief, for $\N$ particles $z_i \in \U$, $i\in\{1,\dots,\N\}$, the discrete model with quadratic potential reads:
\be\label{eqn:model-discr-prod}
	\begin{cases} z_i'(t) = \displaystyle \frac1\N  \sum_{j=1}^{\N} \log_{z_i(t)} z_j(t),\\ z_i(0) = z_i^0. \end{cases}
\ee
By \eqref{eqn:logM}, for all $i\in\{1,\dots,\N\}$ the dynamics of $z_i=(x_i,y_i)$ starting from $z_i^0 = (x_i^0,y_i^0)\in\U$ separate into dynamics of $x_i$ on $\U_1$ and of $y_i$ on $\U_2$, respectively, i.e., \eqref{eqn:model-discr-prod} is equivalent to 
\begin{equation}
\label{eqn:discr-decoupled}
	\begin{cases} x_i'(t) = \displaystyle \frac1\N  \sum_{j=1}^{\N} \log_{x_i(t)} x_j(t),\\ x_i(0) = x_i^0, \end{cases} \quad \begin{cases} y_i'(t) = \displaystyle \frac1\N  \sum_{j=1}^{\N} \log_{y_i(t)} y_j(t),\\ y_i(0) = y_i^0. \end{cases}
\end{equation}
Each of the decoupled systems corresponds to the discrete model with quadratic potential on $M_1$ and $M_2$, respectively. Consequently, we directly have the following theorem ensuring that separate consensuses on $\U_1$ and $\U_2$ imply consensus on $\U$:
\begin{prop}[Asymptotic consensus in discrete product model]\label{prop:cons-prod-d}
	Let $K$ be given by \eqref{eqn:K-quad}, and consider the discrete systems in \eqref{eqn:discr-decoupled}. Suppose that these systems have unique global solutions $(x_i)_{i=1}^n$ and $(y_i)_{i=1}^n$ in $\U_1$ and $\U_2$, respectively, which satisfy, for every $i,j\in\{1,\dots,\N\}$,
	\bes
		d_1(x_i(t),x_j(t)) \to 0 \quad \mbox{and} \quad d_2(y_i(t),y_j(t)) \to 0, \qquad \mbox{as $t\to\infty$}.
	\ees
	Then, the unique global solution $(z_i)_{i=1}^\N := ((x_i,y_i))_{i=1}^\N$ to \eqref{eqn:model-discr-prod}, given by Proposition \ref{prop:cons-prod}, verifies, for every $i,j\in\{1,\dots,\N\}$,
	\bes
		d(z_i(t),z_j(t)) \to 0, \qquad \mbox{as $t\to\infty$}.
	\ees
\end{prop}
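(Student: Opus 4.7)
The plan is to leverage the decoupling of the discrete product system \eqref{eqn:model-discr-prod} into the two independent systems in \eqref{eqn:discr-decoupled}, which has already been observed in the excerpt as a consequence of the product-manifold identity \eqref{eqn:logM} for the logarithm map (and the fact that $-\nabla_\M K_{\bar z}(z) = \log_z \bar z$ for the quadratic potential). Thus there is essentially nothing left beyond assembling the pieces.

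First, I would note that $((x_i,y_i))_{i=1}^\N$, where $(x_i)$ and $(y_i)$ solve the decoupled systems in \eqref{eqn:discr-decoupled}, is itself a global solution to \eqref{eqn:model-discr-prod} with initial data $z_i^0 = (x_i^0, y_i^0)$. By the uniqueness of the global solution stated in Proposition \ref{prop:cons-prod-d} (via the discrete well-posedness already established), we must therefore have
\bes
	z_i(t) = (x_i(t), y_i(t)), \qquad \mbox{for all $t\in[0,\infty)$ and $i\in\{1,\dots,\N\}$.}
\ees

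Next, I would invoke the product distance formula \eqref{eq:prod-dist} to get, for every $i,j\in\{1,\dots,\N\}$ and $t\in[0,\infty)$,
\bes
	d(z_i(t),z_j(t))^2 = d_1(x_i(t),x_j(t))^2 + d_2(y_i(t),y_j(t))^2.
\ees
By the hypothesis of asymptotic consensus on each component, both terms on the right-hand side tend to $0$ as $t\to\infty$, and the conclusion follows immediately by taking square roots.

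There is no real obstacle here: the argument reduces to a matching-of-initial-data uniqueness remark and one application of the product distance formula. The only mild subtlety worth writing down carefully is the identification $z_i = (x_i,y_i)$ via uniqueness, so that the component-wise consensus hypotheses may actually be applied.
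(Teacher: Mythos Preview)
Your proposal is correct and follows exactly the approach intended by the paper: the decoupling via \eqref{eqn:logM} and the product distance formula \eqref{eq:prod-dist} immediately yield the result. The paper in fact does not even write out a proof for this proposition, since the identification $z_i = (x_i,y_i)$ is already built into the statement and the conclusion then follows at once from $d(z_i,z_j)^2 = d_1(x_i,x_j)^2 + d_2(y_i,y_j)^2$.
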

Finally, using Proposition \ref{prop:cons-prod-d} together with Theorem \ref{thm:consensus-disc} and the discrete version of Lemma \ref{lem:consR}, one can establish asymptotic consensus in the discrete model on product manifolds such as those in Example \ref{ex:prod-manifolds}. 


\begin{appendix}
\section{Appendix}

\subsection{Flows on manifolds}
\label{subsect:A-wp}
We summarize here some standard concepts and results on flow maps generated by vector fields on a smooth, complete and connected $\dim$-dimensional Riemannian manifold $\M$ with intrinsic distance $d$. As in the main body of the paper, $T\in(0,\infty]$ denotes a generic final time and $\U$ a generic open subset of $\M$.
\smallskip

{\em Well-posedness of flow maps.}  Local well-posedness of the flow map equation \eqref{eq:characteristics-general} can de established in local charts using standard ODE theory. To this end, we introduce here the notion of Lipschitz continuity and boundedness of a vector field on $\U$.
\begin{defn}[Lipschitz continuity and boundedness on charts]\label{defn:Lip}
	Let $X$ be a vector field on $\U$. We say that $X$ is \emph{locally Lipschitz continuous on charts} if for every chart $(U,\varphi)$ of $\M$ and compact set $Q\subset U \cap \U$, there exists $L_{\varphi,Q}>0$ such that
	\be\label{eq:Lip-vf}
		\norm{\varphi_*X(x) - \varphi_*X(y)}_{\R^\dim} \leq L_{\varphi,Q} \norm{\varphi(x)-\varphi(y)}_{\R^\dim}, \qquad \mbox{for all $x,y\in Q$};
	\ee
	we denote by $\norm{X}_{\mathrm{Lip}(\varphi,Q)}$ the smallest such constant. We say that $X$ is \emph{locally bounded on charts} if for every chart $(U,\varphi)$ of $\M$ and compact set $Q\subset U\cap \U$, there exists $C_{\varphi,Q}>0$ such that
	\bes
		\norm{\varphi_*X(x)}_{\R^\dim} \leq C_{\varphi,Q}, \qquad \mbox{for all $x\in Q$};
	\ees
	we denote by $\norm{X}_{L^\infty(\varphi,Q)}$ the smallest such constant.
\end{defn}

In the above definition, $\varphi_*$ stands for the push-forward of $\varphi$ in the differential geometric sense. Recall the definition: given $\M_1$ and $\M_2$ two differentiable manifolds, a differentiable function $f\: \M_1 \to \M_2$, a point $x\in\M_1$ and a tangent vector $v\in T_x\M_1$, we call $f_*v := \der f(x)(v) \in T_{f(x)}M_2$ the \emph{push-forward} of $v$ through $f$. In particular, in Definition \ref{defn:Lip} we have $\varphi_*X(x) \in T_{\varphi(x)}\R^\dim \simeq \R^\dim$ and $\varphi_*X \circ \varphi^{-1}$ is a vector field on $\varphi(U)\subset\R^\dim$. 

Given a chart $(U,\varphi)$ of $\M$ containing a point $x\in\M$, one has the basis $\left\{\frac{\p}{\p\varphi^1}(x),\dots,\frac{\p}{\p\varphi^\dim}(x)\right\}$ of $T_x \M$ defined by 
\bes
	\frac{\p}{\p\varphi^i}(x) = \der \varphi^{-1}(\varphi(x))(e_i), \quad \mbox{or} \quad e_i = \der\varphi(x)\left(\frac{\p}{\p\varphi^i}(x)\right) = \varphi_* \frac{\p}{\p\varphi^i}(x),
\ees
where, for all $i\in\{1,\dots,\dim\}$, $e_i$ is the $i$th vector of the canonical basis of $\R^\dim$. Also, for 
\bes
	v = v^1 \frac{\p}{\p\varphi^1}(x) + \cdots + v^\dim \frac{\p}{\p\varphi^\dim}(x) \in T_x\M,
\ees
where $(v^1,\dots,v^\dim)\in\R^\dim$, by linearity we get
\begin{align*}
	\varphi_*v = \der\varphi(x)(v) = v^1 e_1 + \cdots + v^\dim e_\dim.
\end{align*}

\begin{rem}\label{rem:Lip} It is easy to check that local Lipschitz continuity on charts implies local boundedness on charts. One can also see that when $\M=\R^\dim$ equipped with the only chart $(\R^\dim,\mathrm{id})$, in Definition \ref{defn:Lip} we recover the classical Euclidean notion of a locally Lipschitz continuous vector field. 

We also point out that this notion of Lipschitz continuity is consistent with the standard notion of differentiability of a vector field from differential geometry, which says that a vector field $X$ on $\U$ is differentiable at $x\in\U$ if for every chart $(U,\varphi)$ of $\M$ with $x\in U$ the map $\xi \mapsto \varphi_*X(\varphi^{-1}(\xi))$ is differentiable at $\varphi(x)$ in the standard Euclidean sense. Indeed, note that \eqref{eq:Lip-vf} can be equivalently reformulated as: for every chart $(U,\varphi)$ of $\M$ and compact set $R\subset \varphi(U\cap\U)$, there exists $L_{\varphi,R}>0$ such that 
\bes
	\norm{\varphi_*X(\varphi^{-1}(\xi)) - \varphi_*X(\varphi^{-1}(\eta))}_{\R^\dim} \leq L_{\varphi,R} \norm{\xi-\eta}_{\R^\dim}, \qquad \mbox{for all $\xi,\eta\in R$}.
\ees
\end{rem}

We now state and prove the Cauchy--Lipschitz theorem on $\U$.
\begin{thm}[Cauchy--Lipschitz]\label{thm:Cauchy-Lip}
	Let $\a\in(0,\infty]$ and let $X$ be a time-dependent vector field on $\U\times [0,\a)$. Suppose that the vector fields in $\{X_t\}_{t\in [0,T)}$ are locally Lipschitz continuous on charts and satisfy, for any chart $(U,\varphi)$ of $\M$ and compact sets $Q\subset U\cap \U$ and $S\subset [0,\a)$,
	\be\label{eq:bdd-Lip}
		\int_S \left( \norm{X_t}_{L^\infty(\varphi,Q)} + \norm{X_t}_{\mathrm{Lip}(\varphi,Q)} \right) \d t < \infty.
	\ee
Then, for every compact subset $\Sigma$ of $\U$, there exists a unique maximal flow map generated by $(X,\Sigma)$.
\end{thm}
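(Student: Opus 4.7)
The plan is to reduce \eqref{eq:characteristics-general} to the classical Cauchy--Lipschitz theorem in Euclidean space via local charts, then use the compactness of $\Sigma$ to secure a uniform initial time of existence, and finally extend the flow maximally in time.

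First, for a single $x_0 \in \Sigma$ and a chart $(U,\varphi)$ of $\M$ with $x_0 \in U$, setting $\xi(t) := \varphi(\PsiX(x_0))$ rewrites \eqref{eq:characteristics-general}, as long as the trajectory stays in $U$, as the Euclidean ODE
\bes
	\frac{\der \xi}{\der t}(t) = (\varphi_* X_t)(\varphi^{-1}(\xi(t))), \qquad \xi(0) = \varphi(x_0),
\ees
posed on the open subset $\varphi(U)$ of $\R^\dim$. By Definition \ref{defn:Lip}, Remark \ref{rem:Lip}, and the integrability hypothesis \eqref{eq:bdd-Lip}, the right-hand side is measurable in $t$ and locally Lipschitz in $\xi$ with an $L^1_{\mathrm{loc}}$ (in $t$) Lipschitz constant and an $L^1_{\mathrm{loc}}$ sup-norm bound on compact subsets of $\varphi(U) \times [0,\a)$. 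The Carath\'eodory form of the Euclidean Cauchy--Lipschitz theorem then yields a unique maximal solution, which pulled back via $\varphi^{-1}$ produces a unique maximal local flow map for this single initial datum; concatenating chart-based solutions across overlapping charts via pointwise uniqueness defines it on the manifold.

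Next, to obtain a common positive time of existence across all initial points in $\Sigma$, I would cover the compact set $\Sigma$ by finitely many open sets $V_1,\dots,V_m \subset \M$, chosen so that each $\overline{V_i}$ is compact and contained in a chart domain $U_i$ with chart map $\varphi_i$. For each $i$, continuous dependence on initial conditions in the Euclidean Cauchy--Lipschitz theorem, combined with the uniform $L^1_{\mathrm{loc}}$ bounds on $\varphi_i(\overline{V_i})$, yields a time $\tau_i > 0$ such that the chart solution starting at any point of $V_i$ is defined and stays in a fixed compact subset of $\varphi_i(U_i)$ for $t \in [0,\tau_i)$. Setting $\tau^{(0)} := \min_i \tau_i > 0$ and pulling back through the inverse charts produces a flow map on $\Sigma \times [0,\tau^{(0)})$; consistency of the local definitions on overlaps $V_i \cap V_j$ follows from pointwise uniqueness applied through the smooth transition maps $\varphi_j \circ \varphi_i^{-1}$.

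Finally, I would extend the flow to its maximal time by taking $\tau$ to be the supremum of all $s \in (0,\a]$ such that a flow map generated by $(X,\Sigma)$ exists on $\Sigma \times [0,s)$. Pointwise uniqueness ensures that any two such flow maps agree on the intersection of their domains, so the family glues into a well-defined, maximal flow map on $\Sigma \times [0,\tau)$, with global uniqueness inherited directly from the pointwise case. The main obstacle is the second step: extracting the uniform lower bound $\tau^{(0)}$ over the compact set $\Sigma$ and reconciling trajectories that pass between overlapping charts. Both difficulties are resolved by combining compactness of $\Sigma$ with pointwise uniqueness and the $L^1_{\mathrm{loc}}$ bounds furnished by \eqref{eq:bdd-Lip}.
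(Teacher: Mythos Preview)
Your proposal is correct and follows essentially the same approach as the paper: both reduce \eqref{eq:characteristics-general} to a Euclidean Carath\'eodory-type ODE via a chart, invoke the classical Cauchy--Lipschitz theorem pointwise, and then use a finite cover of the compact set $\Sigma$ to secure a uniform positive time of existence. If anything, your treatment is slightly more thorough than the paper's, which does not spell out the chart-transition consistency or the supremum argument for maximality as explicitly as you do.
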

\begin{proof}
	Take $x\in \U$ and choose a chart $(U,\varphi)$ of $\M$ with $x\in U$. Consider the initial-value problem
	\be\label{eq:characteristics-Rn}
		\begin{cases} \alpha'(t) = \Xi(\alpha(t),t),\\ \alpha(0) = \varphi(x),
	\end{cases}
	\ee
	where we define $\Xi\: \varphi(U\cap \U) \times [0,\a) \to \R^\dim$ by
	\bes
		\Xi(\xi,t) = \varphi_*(X_t \circ \varphi^{-1}(\xi)), \qquad \mbox{for all $(\xi,t) \in \varphi(U\cap \U) \times [0,\a)$}.
	\ees
	Take $R\subset \varphi(U\cap\U)$ and $S\subset [0,\a)$ compact, so that in particular $Q:=\varphi^{-1}(R)\subset U\cap\U$ is compact. For all $\xi,\eta\in R$ and $t\in S$, our Lipschitz-continuity assumption on $X_t$ yields
	\bes
		\norm{\Xi(\xi,t) - \Xi(\eta,t)}_{\R^\dim} = \norm{\varphi_*(X_t \circ \varphi^{-1}(\xi)) - \varphi_*(X_t \circ \varphi^{-1}(\eta))}_{\R^\dim} \leq \norm{X_t}_{\mathrm{Lip}(\varphi,Q)} \norm{\xi-\eta}_{\R^\dim}.
	\ees
	Also, for all $\xi\in R$ it holds that
	\bes
		\norm{\Xi(\xi,t)}_{\R^\dim} = \norm{\varphi_*(X_t \circ \varphi^{-1}(\xi))}_{\R^\dim} \leq \norm{X_t}_{L^\infty(\varphi,Q)}.
	\ees
	Therefore, by \eqref{eq:bdd-Lip}we get that $\Xi$ satisfies
	\bes
		\int_S \left( \norm{\Xi(\cdot,t)}_{L^\infty(R)} + \norm{\Xi(\cdot,t)}_{\mathrm{Lip}(R)} \right) \d t \leq \int_S \left( \norm{X_t}_{L^\infty(\varphi,Q)} + \norm{X_t}_{\mathrm{Lip}(\varphi,Q)} \right) \d t < \infty.
	\ees
	By arbitrariness of the compact sets $R\subset \varphi(U\cap\U)$ and $S\subset [0,\a)$ and by the classical Cauchy--Lipschitz theorem on $\R^\dim$, this yields the existence of a unique maximal solution $\alpha_x$ to \eqref{eq:characteristics-Rn} defined on some time interval $[0,\tau_x)$, with $\tau_x\leq a$, and with values in $\varphi(U\cap\U)$. By defining $\Psi_x = \varphi^{-1} \circ \alpha_x$, we see that $\alpha_x$ satisfies \eqref{eq:characteristics-Rn} if and only if
	\be\label{eq:characteristics-Rn-chart}
		\begin{cases} \varphi_*\Psi_x'(t) = (\varphi \circ \Psi_x)'(t) = \varphi_*X_t(\Psi_x(t)) & \mbox{for all $t\in [0,\tau_x)$},\\ \varphi(\Psi_x(0)) = \varphi(x).
	\end{cases}
	\ee
	By the bijectivity of $\varphi$, we get that $\Psi_x$ is thus the unique maximal solution to the characteristic equation \eqref{eq:characteristics-general} starting at $x$. 
	
	Let now $\Sigma$ be a compact subset of $\U$. We are left with showing that $\tau:=\inf_{x\in\Sigma}(\tau_x)>0$. By classical Euclidean Lipschitz theory, we deduce that for all $x\in \U$ there exists $\delta_x>0$ such that $\bar\tau_x:=\inf_{y\in B_{\delta_x}(x)} \tau_y > 0$. Since $\Sigma$ is compact we know it can be covered by a finite subfamily of $\{B_{\delta_x}(x)\}_{x\in\Sigma}$, which we index by $\{x_1,\dots,x_n\}$ for some $n\in\mathbb{N}$. We thus get $\tau = \min_{i\in\{1,\dots,n\}} \bar\tau_{x_i} > 0$. The map $\Psi$ defined by $\Psi(x,t) = \Psi_x(t)$ for all $x\in \Sigma$ and $t\in [0,\tau)$ is then the unique maximal flow map generated by $(X,\Sigma)$.
\end{proof}

From Theorem \ref{thm:Cauchy-Lip} we recover the classical Cauchy--Lipschitz theorem for flow maps in Euclidean space when $\M=\R^\dim$. For completeness, we mention that another important result for flow maps on manifolds is when $\U=\M$ itself is compact. The Escape Lemma \cite[Chapter~9]{Lee2013} states that if an integral curve of a Lipschitz continuous vector field on a manifold is not global (i.e., not defined for all $t \in \R$), then the image of that curve cannot lie in any compact subset of the manifold. Consequently, Lipschitz continuous vector fields on compact manifolds defined at all times (i.e., $a=\infty$ above) generate global flows. Another consequence of the Escape Lemma is the following global version of the Cauchy--Lipschitz theorem, in which we assume that $\U$ is geodesically convex. We recall that then, for all $x,y\in \U$, we have
\be\label{eq:disc-geo-convex}
	 \grad_\M d_y^2(x) = -2\log_x(y) \quad \mbox{and} \quad \norm{\log_x(y)}_x = d(x,y),
\ee
where $d_y(x)$ stands for $d(x,y)$, and $\log$ stands for the Riemannian logarithm on $\M$. We shall also use the notation
\bes
	D_r(p) = \{x\in\M \st d(x,p) < r\}, \quad \mbox{for any $p\in\M$ and $r>0$},
\ees
for the open disk in $\M$ of centre $p$ and radius $r$.

\begin{thm}[Global Cauchy--Lipschitz]\label{thm:global-Cauchy-Lip}
	Suppose that $\U$ is geodesically convex. Under the same hypotheses as those of Theorem \ref{thm:Cauchy-Lip}, where $\Sigma$ is a compact subset of $\U$, suppose moreover that $a=\infty$ and there exist $p\in\U$, $r>0$ and $R>r$ so that $D_R(p)\subset \U$, $\Sigma\subset \overline{D_r(p)}$ and
	\be\label{eq:attractive-general}
		\ap{-\log_x p,X(x,t)}_x \leq 0 \quad \mbox{for all $x\in D_R(p)\setminus D_r(p)$ and $t\in[0,\infty)$}.
	\ee
	Then, there exists a unique flow map $\Psi$ generated by $(X,\Sigma)$ defined on $\Sigma\times [0,\infty)$; furthermore, $\Psi(x,t)\in \overline{D_r(p)}$ for all $(x,t)\in\Sigma\times[0,\infty)$.
\end{thm}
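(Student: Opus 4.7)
The plan is to invoke Theorem \ref{thm:Cauchy-Lip} to obtain a unique maximal flow map $\Psi$ generated by $(X,\Sigma)$ on $\Sigma\times[0,\tau)$ for some $\tau\in(0,\infty]$, and then to show (i) the image of $\Psi$ is contained in $\overline{D_r(p)}$, from which (ii) $\tau=\infty$ will follow via the Escape Lemma. The strategy for (i) is a first-exit-time argument that exploits the attractive condition \eqref{eq:attractive-general} as an inward-pointing barrier on the annulus $D_R(p)\setminus D_r(p)$.

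For (i), fix $x\in\Sigma$ and set $\phi(t):= d(\Psi^t(x),p)$ for $t\in[0,\tau)$. Using \eqref{eq:disc-geo-convex} and the chain rule, whenever $\Psi^t(x)\in D_R(p)$ we have
\begin{equation*}
	\frac{\der}{\der t}\phi(t)^2 = \ap{\grad_\M d_p^2(\Psi^t(x)),\, X(\Psi^t(x),t)}_{\Psi^t(x)} = 2\ap{-\log_{\Psi^t(x)} p,\, X(\Psi^t(x),t)}_{\Psi^t(x)},
\end{equation*}
which by \eqref{eq:attractive-general} is nonpositive as soon as $\Psi^t(x)\in D_R(p)\setminus D_r(p)$. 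Since $\phi(0)\leq r$ (as $\Sigma\subset\overline{D_r(p)}$), suppose for contradiction that $\phi(t_1)>r$ for some $t_1\in(0,\tau)$, and set $t_0:=\sup\{t\leq t_1 : \phi(t)\leq r\}$. By continuity, $\phi(t_0)=r$, and since $R>r$, there exists $\delta>0$ such that $\phi(t)\in(r,R)$ for all $t\in(t_0,t_0+\delta)$. Integrating the above differential inequality on $(t_0,t_0+\delta)$ then yields $\phi(t)^2\leq\phi(t_0)^2=r^2$ on this interval, contradicting $\phi(t)>r$ there. Hence $\phi(t)\leq r$ for all $t\in[0,\tau)$.

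For (ii), observe that $\overline{D_r(p)}\subset D_R(p)\subset\U$, and that $\overline{D_r(p)}$ is compact by the Hopf--Rinow theorem (using completeness of $\M$). Since every integral curve $t\mapsto\Psi^t(x)$ remains in this compact set for all $x\in\Sigma$, the Escape Lemma (in its time-dependent form, applicable thanks to the integrability condition \eqref{eq:bdd-Lip}) rules out finite-time breakdown of $\Psi$, forcing $\tau=\infty$. The main technical point to justify with care is the first-exit-time argument for the invariance: the strict inequality $R>r$ is crucial, as it provides the buffer allowing the flow to be shown to remain in the annulus $D_R(p)\setminus D_r(p)$ on a nontrivial time interval over which the attractive bound \eqref{eq:attractive-general} can be integrated to derive the contradiction.
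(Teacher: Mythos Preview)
Your proposal is correct and follows essentially the same approach as the paper: obtain the maximal flow from Theorem \ref{thm:Cauchy-Lip}, use a first-exit-time contradiction argument based on the differential inequality for $d(\Psi^t(x),p)^2$ to show the flow stays in $\overline{D_r(p)}$, and then invoke the Escape Lemma to conclude $\tau=\infty$. Your version is in fact slightly more explicit than the paper's in two respects: you isolate the role of the strict inequality $R>r$ as providing the buffer needed for the barrier argument, and you invoke Hopf--Rinow to justify compactness of $\overline{D_r(p)}$, which the paper leaves implicit.
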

\begin{proof}
	By Theorem \ref{thm:Cauchy-Lip} we know there exists a maximal flow map $\Psi$ generated by $(X,\Sigma)$ defined on $\Sigma\times [0,\tau)$ for some $\tau \in (0,\infty]$. Write $\Psi^t(x)=\Psi(x,t)$ for all $(x,t)\in\Sigma\times[0,\tau)$. Let $x\in \Sigma$ and suppose, by contradiction, that there exists $\tau^*\in(0,\tau)$ such that $\Psi^{\tau^*}(x)\in D_R(p)\setminus \overline{D_r(p)}$. Then, by time continuity of the flow map, we know there exists $\bar\tau\in(0,\tau^*)$ such that $\Psi^{\bar\tau}(x) \in \p D_r(p)$ and $\Psi^{t}(x) \in D_R(p)\setminus D_r(p)$ for all $t\in[\bar\tau,\tau^*]$. Thus, for all $t\in[\bar \tau,\tau^*]$, we have
	\begin{align*}
		\frac{\d}{\d t} d(\Psi^{t}(x),p)^2 &= \ap{\nabla_\M d_p^2(\Psi^t(x)),X(\Psi^t(x),t)}_{\Psi^t(x)} \\
		&= 2 \ap{-\log_{\Psi^t(x)}(p),X(\Psi^t(x),t)}_{\Psi^t(x)} \leq 0,
	\end{align*}
	and by integrating the above between $\bar\tau$ and $\tau^*$ we get
	\bes
		r < d(\Psi^{\tau^*}(x),p) \leq d(\Psi^{\bar \tau}(x),p) = r,
	\ees
	which is absurd. We must therefore have $\Psi^t(x)\in \overline{D_r(p)}$ for all $t\in[0,\tau)$. By the Escape Lemma, this implies that $\tau=\infty$, which ends the proof.
\end{proof}

\subsection{The case of the interaction velocity field}
\label{subsect:A-cont}

In this subsection we show that for a fixed curve $\rho \in \Cont([0,T),\P(\U))$, the velocity field $\V[\rho]$ associated to the interaction equation (see equation \eqref{eqn:v-field}) under Hypothesis \ref{hyp:K} satisfies the assumptions of Theorem \ref{thm:Cauchy-Lip}, and hence it generates a local flow map. This, in particular, justifies the definition of the map $\Gamma$ used in Theorems \ref{thm:well-posedness} and \ref{thm:well-posedness-cyl}. We also show that $\V[\rho]$ satisfies the Theorem \ref{thm:global-Cauchy-Lip} whenever $K$ is purely attractive, that is, $g'\geq0$ in Hypothesis \ref{hyp:K}.

For a curve $\rho  \in \Cont([0,T),\P(\U))$ we recall:
\bes
	\V[\rho](x,t) = - \int_\U \grad_\M K_y(x) \d \rho_t(y), \qquad \mbox{$x\in \U$, $t\in [0,T)$},
\ees
where $K\:\M\times\M\to \R$ is the interaction potential and $K_y$ stands for $x\mapsto K(x,y)$. To ensure that $\V[\rho]$ is pointwise well-defined we can restrict to curves $\rho\in \Cont([0,T),\P_1(\U))$ and assume that there exist measurable functions $\alpha,\beta\: \M \to [0,\infty)$ such that
\bes
	\norm{\grad_\M K_y(x)}_x \leq \alpha(x) + \beta(x) d(x,y), \qquad \mbox{for all $x,y\in\M$}.
\ees

Otherwise, one can also restrict to $\rho\in\Cont([0,T);\P_\infty(\U))$ and assume that the vector field $\grad_M K_y$ is locally bounded on charts for all $y\in\U$. In this case, if we further assume that $\grad_M K_y$ is locally Lipschitz continuous on charts for all $y\in\U$, then Theorem \ref{thm:Cauchy-Lip} applies to the vector field $\V[\rho]$ provided the maps $y\mapsto \norm{\grad_\M K_y}_{L^\infty(\varphi,Q)}$ and $y\mapsto \norm{\grad_\M K_y}_{\mathrm{Lip}(\varphi,Q)}$ are locally bounded for any chart $(U,\varphi)$ of $\M$ and compact set $Q\subset U\cap \U$. 

Indeed, let $\rho\in\Cont([0,T);\P_\infty(\U))$, $(U,\varphi)$ be a chart of $\M$, $Q\subset U\cap\U$ be compact and let $Q_t \subset \U$ be a compact set containing $\supp(\rho_t)$ such that $t\mapsto \mathrm{diam}(Q_t)$ is nondecreasing. Then, for all $x,y\in Q$ and $t\in S$, where $S\subset [0,T)$ is compact, we have
\be\label{eq:bdd-int-vel}
	\norm{\varphi_* \V[\rho](x,t)}_{\R^\dim} \leq \int_{Q_t} \norm{\varphi_*\grad_\M K_y(x)}_{\R^\dim} \d \rho_t(y) \leq \sup_{\bar y\in Q_s} \norm{\grad_\M K_{\bar y}}_{L^\infty(\varphi,Q)},
\ee
where $s=\sup(S)$, and
\begin{align}\label{eq:Lip-int-vel}
	\norm{\varphi_*\V[\rho](x,t) - \varphi_*\V[\rho](y,t)}_{\R^\dim} &\leq \int_{Q_t} \norm{\varphi_*\grad_\M K_z(x) - \varphi_*\grad_\M K_z(y))}_{\R^\dim} \d \rho_t(z) \nonumber\\
	&\leq \sup_{\bar z\in Q_s} \norm{\grad_\M K_{\bar z}}_{\text{Lip}(\varphi,Q)} \norm{\varphi(x)-\varphi(y)}_{\R^\dim}.
\end{align}

In practice, however, it may not be easy to check whether $\grad_\M K$ satisfies this uniform Lipschitz condition because of the push-forward with the chart that needs to be computed. In the particular case when the potential satisfies \ref{hyp:K}, as in our paper, the conditions can be checked as follows. 

Suppose in the rest of this subsection that $\U$ is geodesically convex. In particular, this implies that $\U$ can be covered by a single chart, which we shall generically denote by $(\U,\psi)$; such a chart is for instance provided by any normal chart. Furthermore, the relations in \eqref{eq:disc-geo-convex} hold for all $x,y\in\U$. This allows us to treat the nonlocality in the velocity field which takes the form of an integral on $\U$. 
The lemma below ensures that under Assumption \ref{hyp:K} the Lipschitz theory given in Theorem \ref{thm:Cauchy-Lip} applies to the interaction velocity field $\V[\rho]$ as long as $\rho_t$ is compactly supported for all $t$.

\begin{lem}\label{lem:interaction-complete}
	Let $K$ satisfy \ref{hyp:K}, and let $\rho\in \Cont([0,T);\P_\infty(\U))$. Then the velocity fields in $\{\V[\rho](\cdot,t)\}_{t\in I}$ are locally Lipschitz continuous on charts and satisfy \eqref{eq:bdd-Lip}, that is, they satisfy the assumptions of the Cauchy--Lipschitz theorem \ref{thm:Cauchy-Lip}.
\end{lem}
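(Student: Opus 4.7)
The plan is to leverage the smoothness afforded by geodesic convexity together with Hypothesis \ref{hyp:K} to get the Lipschitz regularity of $\grad_\M K_y$ in $x$, uniformly as $y$ varies in a compact set, and then transfer this property to the convolution $\V[\rho]$ via the estimates \eqref{eq:bdd-int-vel} and \eqref{eq:Lip-int-vel} that are already laid out in the text.

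First, I would analyse the pointwise regularity of $\grad_\M K_y(x) = -2g'(d(x,y)^2) \log_x y$. Since $\U$ is geodesically convex, any pair of points in $\U$ is joined by a unique minimizing geodesic whose dependence on the endpoints is smooth; consequently $(x,y)\mapsto d(x,y)^2$ and $(x,y)\mapsto \log_x y$ are smooth maps on $\U\times\U$ (the latter taking values in the tangent bundle). Combined with $g'\in C^{0,1}_{\mathrm{loc}}$ from \ref{hyp:K}, this shows that for every chart $(U,\varphi)$ of $\M$, every compact $Q \subset U\cap \U$, and every compact $Q'\subset \U$, there is a constant $C(Q,Q')>0$ such that
\[
	\sup_{y\in Q'} \norm{\grad_\M K_y}_{L^\infty(\varphi,Q)} + \sup_{y\in Q'} \norm{\grad_\M K_y}_{\mathrm{Lip}(\varphi,Q)} \leq C(Q,Q').
\]
Here I am using that the chart push-forward $\varphi_*$ is a smooth bundle isomorphism between $T\M\restrict{U}$ and $\varphi(U)\times\R^\dim$, so it preserves local Lipschitz regularity on compacts.

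Next, I would fix a compact time interval $S\subset [0,T)$ and pick compact sets $Q_t\subset \U$ containing $\supp(\rho_t)$ with $t\mapsto \diam(Q_t)$ nondecreasing (which exist since each $\rho_t \in \P_\infty(\U)$ and, taking $s=\sup S$, the relevant quantities are controlled by $Q_s$). Applying the inequalities \eqref{eq:bdd-int-vel} and \eqref{eq:Lip-int-vel} and using the uniform bound obtained in the previous step with $Q' = Q_s$, I obtain
\[
	\norm{\V[\rho](\cdot,t)}_{L^\infty(\varphi,Q)} + \norm{\V[\rho](\cdot,t)}_{\mathrm{Lip}(\varphi,Q)} \leq 2\, C(Q,Q_s), \qquad \mbox{for all $t\in S$.}
\]
The right-hand side is independent of $t$, so integrating over $S$ gives the finiteness required by \eqref{eq:bdd-Lip}.

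The main subtlety is not the Lipschitz computation itself (it is mechanical, once one has the smoothness of $\log_x y$ on the geodesically convex set $\U$) but rather ensuring the existence of the uniform compact set $Q_s$ containing $\supp(\rho_t)$ over the whole compact time interval $S$. This is where the assumption $\rho\in \Cont([0,T);\P_\infty(\U))$ together with the monotone construction of $Q_t$ plays its role, replacing a stronger metric-continuity assumption. Everything else reduces to uniform bounds that pass under the integral sign.
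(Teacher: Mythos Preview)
Your proposal is correct and follows essentially the same approach as the paper: both arguments exploit the smoothness of $(x,y)\mapsto d(x,y)^2$ (equivalently, of $\log_x y$) on the geodesically convex set $\U$, combine it with the local Lipschitz continuity of $g'$ from \ref{hyp:K} to obtain uniform-in-$y$ bounds on $\norm{\grad_\M K_y}_{L^\infty(\varphi,Q)}$ and $\norm{\grad_\M K_y}_{\mathrm{Lip}(\varphi,Q)}$, and then pass these through \eqref{eq:bdd-int-vel}--\eqref{eq:Lip-int-vel}. The only cosmetic difference is that the paper explicitly restricts to a single chart $(\U,\psi)$ covering $\U$ and writes out the add-and-subtract step for the Lipschitz estimate, whereas you invoke the bundle-isomorphism property of $\varphi_*$ more abstractly.
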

\begin{proof}
	By the above discussion, we only need to show that $\grad_\M K_z$ is locally Lipschitz continuous on charts and the maps $z\mapsto \norm{\grad_\M K_z}_{L^\infty(\varphi,Q)}$ and $z\mapsto \norm{\grad_\M K_z}_{\mathrm{Lip}(\varphi,Q)}$ are locally bounded for any chart $(U,\varphi)$ on $\M$ and compact set $Q\subset U\cap\U$. Since $\U$ can be covered entirely by a single chart $(\U,\psi)$, we can restrict our computations to $(\U,\psi)$. Note furthermore that since $\M$ is smooth we know that the map $(x,y)\mapsto \grad_\M d^2_y(x)$ is smooth on $\U\times\U$.
	
	Let $Q\subset \U$ be compact. For all $x\in Q$ and $y\in \U$ we get
	\bes
		\norm{\psi_*\grad_\M K_y(x)}_{\R^\dim} \leq |g'(d(x,y)^2)| \norm{\psi_* \grad_\M d^2_y(x)}_{\R^\dim} \leq |g'(d(x,y)^2)| \norm{\grad_\M d^2_y}_{L^\infty(\psi,Q)},
	\ees
	so that by the local boundedness of $g'$ and of $y\mapsto \norm{\grad_\M d^2_y}_{L^\infty(\psi,Q)}$ we get that $y\mapsto \norm{\grad_\M K_y}_{L^\infty(\psi,Q)}$ is locally bounded. Furthermore, for all $x,y \in Q$ and $z\in\U$ we have
	\begin{align*}
		& \norm{\psi_*\grad_\M K_z(x) - \psi_*\grad_\M K_z(y)}_{\R^\dim} = \norm{g'(d(x,z)^2) \psi_*\grad_\M d^2_z(x)  -  g'(d(y,z)^2) \psi_*\grad_\M d^2_z(y)}_{\R^\dim}\\
		&\qquad \leq |g'(d(x,z)^2)| \norm{\psi_*\grad_\M d^2_z(x) - \psi_*\grad_\M d^2_z(y)}_{\R^\dim} + \norm{\psi_*\grad_\M d^2_z(y)}_{\R^\dim} |g'(d(x,z)^2) - g'(d(y,z)^2)| \\
		&\qquad \leq  |g'(d(x,z)^2)| \norm{\grad_\M d^2_z}_{\mathrm{Lip}(\psi,Q)} d(x,y) + \norm{\grad_\M d^2_z}_{L^\infty(\psi,Q)} |g'(d(x,z)^2) - g'(d(y,z)^2)|.
	\end{align*}
Hence, by the local Lipschitz continuity of $g'$ (and thus of $r\mapsto g'(r^2)$), and the local boundedness of $g'$, $z\mapsto \norm{\grad_\M d^2_z}_{\mathrm{Lip}(\psi,Q)}$ and $z\mapsto \norm{\grad_\M d^2_z}_{L^\infty(\psi,Q)}$, we conclude that $z\mapsto \norm{\grad_\M K_z}_{\mathrm{Lip}(\psi,Q)}$ is locally bounded, which ends the proof.
\end{proof}

\begin{rem}\label{rem:indep-max-time}The maximal time of existence of the Cauchy--Lipschitz theorem \ref{thm:Cauchy-Lip} for the interaction velocity field $v[\rho]$ does not depend on the curve $\rho$; this is because the $L^\infty$ and Lipschitz bounds in \eqref{eq:bdd-int-vel} and \eqref{eq:Lip-int-vel} do not depend on $\rho$.
\end{rem}

\begin{lem}\label{lem:interaction-complete-global}
	 Let $K$ satisfy \ref{hyp:K} with $g'\geq 0$, and let $\rho\in \Cont([0,\infty);\P_\infty(\U))$. Let furthermore $\Sigma\subset \U$ be compact and such that $\Sigma\subset \overline{D_r(p)} \subset D_R(p) \subset \U$ for some $p\in\U$ and $r,R>0$ with $\overline{D_\delta(p)}$ geodesically convex for all $\delta\in[r,R)$. Then, the pair $(\V[\rho],\Sigma)$ satisfies the assumptions of the global Cauchy--Lipschitz theorem \ref{thm:global-Cauchy-Lip} provided $\supp(\rho_t)\subset \overline{D_r(p)}$ for all $t\in[0,\infty)$.
\end{lem}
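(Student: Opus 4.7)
The plan is to verify the four hypotheses of the global Cauchy--Lipschitz theorem (Theorem \ref{thm:global-Cauchy-Lip}) for the pair $(\V[\rho],\Sigma)$. The local Lipschitz-on-charts condition and the integrability requirement \eqref{eq:bdd-Lip}, together with $a=\infty$, follow directly from Lemma \ref{lem:interaction-complete} applied on any compact subinterval of $[0,\infty)$. The geometric inclusions $\Sigma \subset \overline{D_r(p)} \subset D_R(p) \subset \U$ are built into the hypothesis. Thus the only substantive ingredient, and the main obstacle, is the ``attractive'' sign condition \eqref{eq:attractive-general}:
$$\langle -\log_x p,\, \V[\rho](x,t)\rangle_x \leq 0 \qquad \text{for all } x \in D_R(p)\setminus D_r(p),\; t\in[0,\infty).$$

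To verify this, one first rewrites the velocity field using Assumption \ref{hyp:K} and \eqref{eqn:gradK-gen}:
$$\V[\rho](x,t) = -\int_\U \grad_\M K_y(x)\,\d\rho_t(y) = 2\int_\U g'(d(x,y)^2)\,\log_x y\,\d\rho_t(y).$$
Substituting this expression, the attractive condition becomes
$$\int_\U g'(d(x,y)^2)\,\langle \log_x p,\log_x y\rangle_x\,\d\rho_t(y) \geq 0.$$
Since $g'\geq 0$ by hypothesis and $\supp(\rho_t)\subset \overline{D_r(p)}$ for every $t$, it suffices to establish the pointwise inequality
$$\langle \log_x p,\log_x y\rangle_x \geq 0 \qquad \text{for every } x \in D_R(p)\setminus D_r(p) \text{ and every } y \in \overline{D_r(p)}.$$

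This pointwise inequality is the main geometric content and it is where the geodesic convexity of \emph{every} enlarged disk $\overline{D_\delta(p)}$ with $\delta\in[r,R)$ is used. Set $\delta := d(x,p)$, so that $r\leq \delta<R$ and both $x$ and $y$ lie in $\overline{D_\delta(p)}$. By geodesic convexity, the minimizing geodesic $\gamma(s) = \exp_x(s\log_x y)$, $s\in[0,1]$, from $x$ to $y$ is entirely contained in $\overline{D_\delta(p)}$. Hence $s\mapsto d(\gamma(s),p)^2$ attains its maximum on $[0,1]$ at $s=0$, so its right derivative there is nonpositive. Computing this derivative with the chain rule and \eqref{eq:disc-geo-convex},
$$0 \geq \left.\frac{\der}{\der s}\right|_{s=0} d(\gamma(s),p)^2 = \langle \grad_\M d_p^2(x),\log_x y\rangle_x = -2\langle \log_x p,\log_x y\rangle_x,$$
which yields the desired inequality. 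This completes the verification of \eqref{eq:attractive-general}, and the lemma then follows directly from Theorem \ref{thm:global-Cauchy-Lip}.
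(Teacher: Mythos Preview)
Your proof is correct and follows essentially the same approach as the paper: invoke Lemma~\ref{lem:interaction-complete} for the Lipschitz and integrability hypotheses, then reduce the attractive condition~\eqref{eq:attractive-general} to the pointwise inequality $\langle \log_x p,\log_x y\rangle_x\geq 0$, and prove the latter by differentiating $s\mapsto d(\gamma(s),p)^2$ along the minimizing geodesic from $x$ to $y$ and using geodesic convexity of $\overline{D_{d(x,p)}(p)}$. Your phrasing (noting the function attains its maximum at $s=0$) is a minor variant of the paper's contradiction argument, but the substance is identical.
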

\begin{proof}
Thanks to Lemma \ref{lem:interaction-complete}, we are only left with checking that $\V[\rho]$ verifies \eqref{eq:attractive-general}. Suppose that $\supp(\rho_t)\subset \overline{D_r(p)}$ for all $t\in[0,\infty)$ and let $x\in D_R(p)\setminus D_r(p)$. Then, for all $t\in[0,\infty)$ there holds
\begin{align}\label{eq:integral-global-Lipschitz-velocity}
	\ap{-\log_x p,\V[\rho](x,t)}_x &= \ap{\log_x p,\grad_\M K*\rho_t(x)}_x \nonumber \\
	&= \int_{\overline {D_r(p)}} g'(d(x,y)^2) \ap{\log_x p,\grad_\M d_y^2(x)}_x \d \rho_t(y) \nonumber \\
	&= -2\int_{\overline {D_r(p)}} g'(d(x,y)^2) \ap{\log_x p,\log_x y}_x \d \rho_t(y).
\end{align}

Fix now $y\in \overline{D_r(p)}$ and write $\gamma:[0,1]\to\M$ the unique minimizing geodesic connecting $x$ to $y$. For all $t\in[0,1]$, compute
\bes
	\left. \frac{\der}{\der t}\right|_{t = 0} d(\gamma(t),p)^2 = \ap{\grad_\M d_p^2(\gamma(0)),\gamma'(0)}_{\gamma(0)} = -2\ap{\log_x p,\log_x y}_x.
\ees
By smoothness of $t\mapsto d(\gamma(t),p)^2$ (because $\M$ is smooth), we must have $\left. \frac{\der}{\der t}\right|_{t = 0} d(\gamma(t),p)^2 \leq 0$. Indeed, otherwise there would exist $\tau\in(0,1)$ such that $d(\gamma(\tau),p)>d(\gamma(0),p)=d(x,p)$, which would contradict $\gamma([0,1])\subset \overline{D_{d(x,p)}(p)}$ and thus the geodesic convexity of $\overline{D_{d(x,p)}(p)}$.

Coming back to \eqref{eq:integral-global-Lipschitz-velocity}, we get $\ap{-\log_x p,\V[\rho](x,t)}_x\leq 0$ for all $t\in[0,\infty)$, since inside the integral we have $g'(d(x,y)^2)\geq 0$ and $\ap{\log_x p,\log_x y}_x\geq0$ (by the above argument).
\end{proof}



\subsection{Solution to the interaction equation}
\label{subsect:A-sol}

{\bf Proof of Lemma \ref{lem:distrib-1}.} Let $\tf\in \Cont_\mt{c}^\infty(\U\times (0,T))$ and, for all $x\in\supp(\rho_0)$, define $\pf_x \: [0,T) \to \R$ by
\bes
	\pf_x(t) = \tf(\Psi_{\V[\rho]}^t(x),t), \qquad \mbox{for all $t\in[0,T)$}.
\ees
We have that $\pf_x$ is differentiable for all $x\in\supp(\rho_0)$ with, for all $t\in[0,T)$,
\bes
	\pf_x'(t) = \p_t \tf(\Psi_{\V[\rho]}^t(x),t) + \bigl \langle \V[\rho](\Psi_{\V[\rho]}^t(x),t),\grad_\M \tf(\Psi_{\V[\rho]}^t(x),t) \bigr\rangle_{\Psi_{\V[\rho]}^t(x)}=: \Lambda_t \circ \Psi_{\V[\rho]}^t(x).
\ees
Denote by $Q\subset\U$ and $S\subset (0,T)$ two compact sets such that $\supp(\phi)\subset Q\times S$, and by $L_\tf$ the quantity $\max(\sup|\p_t \tf|,\sup\norm{\grad_M \tf})$, where the supremums are taken over $\supp(\tf)\times(0,T)$. Writing $s=\sup(S)$ and using \eqref{eq:bound-distrib}, we have:
\begin{align*}
	\int_0^T \int_{\supp(\rho_0)} |\pf_x'(t)| \d\rho_0(x) \d t &= \int_0^T \int_{\supp(\rho_0)} \bigl |  \Lambda_t(\Psi_{\V[\rho]}^t(x))\bigr |  \d\rho_0(x) \d t = \int_0^T \int_\U | \Lambda_t(x)| \d\rho_t(x) d t \\
	&= \int_S \int_Q | \p_t \tf(x,t) + \left \langle \V[\rho](x), \grad_\M \tf(x,t) \right \rangle_x | \d\rho_t(x) \d t\\
	&\leq L_\tf \left(s + \int_S \int_Q\| \V[\rho] (x,t) \|_x \d\rho_t(x) \d t \right) < \infty.
\end{align*}
Consider now the calculation above in reverse (just the first two lines), without the absolute value in the integrand. By Fubini's theorem and since $\tf(x,0) = \tf(x,s) = 0$ for all $x\in \U$, we get
\begin{align*}
	&\int_S \int_Q \left( \p_t \tf(x,t) + \bigl \langle \V[\rho] (x,t), \grad_\M \tf(x,t) \bigr \rangle \right) \d\rho_t(x) \d t 
	= \int_0^T \int_{\supp(\rho_0)} \pf_x'(t) \d\rho_0(x) \d t  \\
	&\phantom{=}= \int_{\supp(\rho_0)} \int_S \pf_x'(t) \d t \d \rho_0(x) = \int_{\supp(\rho_0)} \left( \tf(\Psi_{\V[\rho]}^s(x),s) - \tf(x,0) \right) \d\rho_0(x) \\
	&\phantom{=} = \int_\U \tf(x,s) \d\rho_s(x) - \int_\U \tf(x,0) \d\rho_0(x) = 0.
\end{align*}
This shows that $\rho$ is a solution in the sense of distributions to equation \eqref{eqn:model}.

Let us finally prove that in fact $\rho \in \Cont([0,T);\P(\U))$. Take $t\in [0,T)$ and a sequence $(t_k)_{k\geq1} \subset [0,T)$ such that $t_k \to t$ as $k\to\infty$. For all $\tf\in \Cont_\mt{b}(\U)$, as $k\to\infty$ we get
\bes
	\int_\U \tf(x) \d\rho_{t_k}(x) = \int_{\supp(\rho_0)} \tf(\Psi_{\V[\rho]}^{t_k}(x)) \d\rho_0(x) \to \int_{\supp(\rho_0)} \tf(\Psi_{\V[\rho]}^t(x)) \d\rho_0(x) = \int_\U \tf(x) \d\rho_t(x),
\ees
where we used the time continuity of the flow map, that is, $\Psi_{\V[\rho]}^{t_k} \to \Psi_{\V[\rho]}^t$ pointwise, since it solves the first order ODE system \eqref{eq:characteristics-general}, and Lebesgue's dominated convergence theorem.

\end{appendix}

\bigskip

{\large \bf Acknowledgements }  R.F. acknowledges support from NSERC Discovery Grant PIN-341834 during this research. H.P. was supported by National Research Foundation of Korea (NRF-2020R1A2C3A01003881) and Basic Science Research Program through the National Research Foundation of Korea (NRF) funded by the Ministry of Education (2019R1I1A1A01059585). The research in this paper has been initiated at an workshop hosted by the American Institute of Mathematics (AIM) in San Jose, CA, USA. R.F. and F.P. would like to acknowledge AIM for facilitating their research collaboration.


\bibliographystyle{abbrv}
\def\url#1{}
\bibliography{lit}

\end{document}